\newtheorem{thm}{Theorem}[section]
\newtheorem{prop}[thm]{Proposition}
\newtheorem{lem}[thm]{Lemma}
\newtheorem{cor}[thm]{Corollary}
\crefname{thm}{Theorem}{Theorems}
\crefname{prop}{Proposition}{Propositions}
\crefname{lem}{Lemma}{Lemmas}
\crefname{cor}{Corollary}{Corollaries}
\newtheorem{prob}{Problem}
\crefname{claim}{Claim}{Claims}
\crefname{step}{Step}{Steps}
\theoremstyle{definition}
\newtheorem{definition}[thm]{Definition}
\newtheorem{defn}[thm]{{Definition}}
\newtheorem{example}[thm]{Example}
\crefname{definition}{Definition}{Definitions}
\crefname{example}{Example}{Examples}
\theoremstyle{remark}
\newtheorem{remark}[thm]{{\bf {\em Remark}}}
\crefname{remark}{Remark}{Remarks}
\numberwithin{equation}{section}
\newcommand{\pair}[2]{\ensuremath\langle#1,#2\rangle}
\newcommand{\compl}{^\mathrm{c}}
\newcommand{\defeq}{\vcentcolon=}
\newcommand{\trinorm}[1]{\ensuremath|\!|\!|#1|\!|\!|}
\renewcommand{\leq}{\leqslant}
\renewcommand{\geq}{\geqslant}
\newcommand{\ssubset}{%
\Subset}
\renewcommand{\phi}{\varphi}
\renewcommand{\epsilon}{\varepsilon}
\newcommand{\tildee}[1]{\widetilde{#1}}
\DeclareMathOperator{\LSC}{LSC}
\DeclareMathOperator{\USC}{USC}
\DeclareMathOperator{\intr}{int}
\DeclareMathOperator{\Aff}{Aff}
\newcommand{\gf}{\mathfrak{g}}
\newcommand{\N}{\mathbb{N}}
\newcommand{\de}{\partial}
\newcommand{\R}{{\mathbb{R}}}
\newcommand{\call}[1]{\ensuremath\mathcal{#1}}
\newcommand{\frk}[1]{\ensuremath\mathfrak{#1}}
\newcommand{\scr}[1]{\ensuremath\mathscr{#1}}
\renewcommand{\bar}[1]{\ensuremath\overline{#1}}
\newcommand{\dto}{\ensuremath{\searrow}}
\newcommand{\uto}{\ensuremath{\nearrow}}
\newcommand{\veps}{\varepsilon}
\newcommand{\SAff}{\mathrm{SA}}
\newcommand{\SA}{\mathrm{S}\cA}
\newcommand{\cA}{\mathcal{A}}
\newcommand{\cJ}{\mathcal{J}}
\newcommand{\scK}{\mathscr{K}}
\newcommand{\cF}{\mathcal{F}}
\newcommand{\cG}{\mathcal{G}}
\newcommand{\cH}{\mathcal{H}}
\newcommand{\scH}{\mathscr{H}}
\newcommand{\cN}{\mathcal{N}}
\newcommand{\cM}{\mathcal{M}}
\newcommand{\cP}{\mathcal{P}}
\newcommand{\cQ}{\mathcal{Q}}
\newcommand{\cD}{\mathcal{D}}
\newcommand{\wt}{\widetilde}
\newcommand{\cS}{\mathcal{S}}
\newcommand{\changelocaltocdepth}[1]{%
	\addtocontents{toc}{\protect\setcounter{tocdepth}{#1}}%
	\setcounter{tocdepth}{#1}%
}
\newcommand{\noticina}[1]{\marginpar{\tiny\emph{#1}}}
\renewcommand{\marginpar}[2][]{}
\begin{document}

\date{\today} \linespread{1.2}

\title[Comparison principles by fiberegularity and monotonicity]{Comparison principles for nonlinear potential theories and PDEs with fiberegularity and sufficient monotonicity}
\author{Marco Cirant}
\address{Dipartimento di Matematica ``T.\ Levi-Civita''\\ Università degli Studi di Padova\\ \newline Via Trieste 63\\ 35121--Padova, Italy}
\email{cirant@math.unipd.it (Marco Cirant)}
\author{Kevin R.\ Payne}
\address{Dipartimento di Matematica ``F.\ Enriques''\\ Università degli Studi di Milano\\ \newline Via~C.~Saldini~50\\ 20133--Milano, Italy}
\email{kevin.payne@unimi.it (Kevin R. Payne)} 
\author{Davide F.\ Redaelli}
\address{Dipartimento di Matematica ``T.\ Levi-Civita''\\ Università degli Studi di Padova\\ \newline Via Trieste 63\\ 35121--Padova, Italy}
\email{redaelli@math.unipd.it (Davide F.\ Redaelli)}

\keywords{}

\subjclass[2010]{}

\makeatletter
\def\l@subsection{\@tocline{2}{0pt}{2.5pc}{5pc}{}}
\makeatother

\begin{abstract}
	We present some recent advances in the productive and symbiotic interplay between general potential theories (subharmonic functions associated to closed subsets $\mathcal{F} \subset \mathcal{J}^2(X)$ of the 2-jets on $X \subset \mathbb{R}^n$ open) and subsolutions of degenerate elliptic and parabolic PDEs of the form $F(x,u,Du,D^2u) = 0$. We will implement the {\em monotonicity-duality} method begun by Harvey and Lawson \cite{HL09} (in the pure second order constant coefficient case) for proving comparison principles for potential theories where $\mathcal{F}$ has {\em sufficient monotonicity} and {\em fiberegularity} (in variable coefficient settings) and which carry over to all differential operators $F$ which are {\em compatible} with $\mathcal{F}$ in a precise sense for which the {\em correspondence principle} holds. 
	
	We will consider both elliptic and parabolic versions of the comparison principle in which the effect of boundary data is seen on the entire boundary or merely on a proper subset of the boundary.
	
	Particular attention will be given to {\em gradient dependent} examples with the requisite sufficient monotonicity of {\em proper ellipticity} and {\em directionality} in the gradient. Examples operators we will discuss include the degenerate elliptic operators of {\em optimal transport} in which the target density is strictly increasing in some directions as well as operators which are weakly parabolic in the sense of Krylov. Further examples, modeled on {\em hyperbolic polynomials} in the sense of G\aa rding give a rich class of examples with directionality in the gradient. Moreover we present a model example in which the comparison principle holds, but standard viscosity structural conditions fail to hold. 
\end{abstract}

\maketitle

\setcounter{tocdepth}{1}
\tableofcontents

\changelocaltocdepth{2}

\makeatletter
\def\l@subsection{\@tocline{2}{0pt}{2.5pc}{5pc}{}}
\makeatother

\setcounter{tocdepth}{1}
\tableofcontents

\section{Introduction}\label{sec:intro}

In this work, we continue an investigation into the validity of the {\em comparison principle} 
\begin{equation}\label{CP:intro}
	u \leq w \ \text{on} \ \partial \Omega \ \ \Rightarrow \ \ 	u \leq w \ \text{in} \ \Omega 
\end{equation}
on bounded domains $\Omega$ in Euclidean spaces $\R^n$. We will operate in the two seemingly distinct frameworks of general second order {\em nonlinear potential theories} and of general second order {\em fully nonlinear PDEs}, where the formulations of {\em comparison} \eqref{CP:intro} in the two frameworks will soon be made precise. Comparison is of interest in both frameworks since, as is well known, it implies uniqueness of solutions to the natural Dirichlet problem in both frameworks (in the presence of some mild form of ellipticity). Moreover, comparison \eqref{CP:intro} together with suitable strict boundary convexity (that ensures the existence of needed barriers) leads to existence of solutions to the Dirichlet problem by Perron's method. 

In both frameworks we will also treat the following variant of the comparison principle
\begin{equation}\label{CPP:intro}
u \leq w \ \text{on} \ \partial ^-\Omega \ \ \Rightarrow \ \ 	u \leq w \ \text{in} \ \Omega, 
\end{equation}
where $\partial ^-\Omega \subsetneq \partial \Omega$ is a proper subset of the boundary. The version \eqref{CP:intro}  ``sees'' the entire boundary and will hold under weak ellipticity assumptions and hence we will refer to it as the {\em elliptic version} of comparison. On the other hand, the version \eqref{CPP:intro} which sees only a  ``reduced boundary'' will be refered to as the {\em parabolic version} of comparison since it holds (for example) under weak parabolicity assumptions. 

While both versions of comparison are seemingly different in the two frameworks, we will connect the two frameworks for both versions by something called the {\em correspondence principle} which gives precise conditions of {\em compatibility} for which comparison in the two frameworks is equivalent. This is important for many reasons. A given second order potential theory on an open subset $X \subset \R^n$ is determined by a {\em constraint set} $\cF$  which is closed subset of $\cJ^2(X)$ (the space of $2$-jets on $X$) and which identifies a class of {\em $\cF$-subharmonic functions} on $X$, while a PDE on $X$ is an equation of the form $F(x,J) = 0$ determined by an operator $F$ acting on $(x,J) \in \cJ^2(X)$. There may be many differential operators $F$ organized about $\cF$ which are compatible with the constraint set in the sense
$$
	\cF = \{(x,J) \in \cJ^2(X): F(x,J) \geq 0\} \quad \text{and} \quad  \cF = \{(x,J) \in \cJ^2(X): F(x,J) = 0\}.
$$
Hence potential-theoretic comparison for $\cF$ gives operator-theoretic comparison for \underline{every} operator $F$ compatible with $\cF$. This is just one instance of the productive interplay between potential theory and operator theory. See the survey paper \cite{HP22} and the preface of the monograph \cite{CHLP22} for a more complete discussion of this interplay. We will have more to say on the origins and development of this program below, after presenting the two formulations of comparison and the main results obtained here, which will allow us to clearly underline what is new in this paper.

We now describe comparison in the first (potential theoretic) framework. Here one asks when does comparison \eqref{CP:intro} hold on $\overline{\Omega}$ for each pair $u \in \USC(\overline{\Omega})$ and $w \in \LSC(\overline{\Omega})$ which are respectively $\cF-$subharmonic and $\cF-$superharmonic functions on $\Omega \ssubset X$ where 
\begin{equation*}\label{Subeq:intro}
\cF \subset \cJ^2(X) := X \times \cJ^2 = X \times \R \times \R^n \times \cS(n), \ \ X \subset \R^n \ \text{open},
\end{equation*}
is a {\em subequation (constraint set) on $X$} in the space of $2$-jets. The precise definitions of subequations $\cF$ and their associated subharmonics are given in Definitions \ref{defn:subeq} and \ref{defn:Fsub}, respectively. We note here only that $\cF$ is closed and satisfies certain natural (monotonicity and topological) axioms which ensure that the potential theory determined by $\cF$ (the associated $\cF$subharmonics) is meaningful and rich where the subharmonics on $\Omega$ satisfy the differential inclusion 
\begin{equation*}\label{Fsub:intro}
		J^{2,+}_x u \subset \cF_x := \{ J \in \cJ^2: (x,J) \in \cF\}, \ \ \forall \, x \in \Omega
\end{equation*}
in the viscosity sense where $J^{2,+}_x u$ is the set of {\em upper test jets for $u$ in $x$}. 

A general comparison principle is presented in Theorem \ref{thm:GCT} in this potential theoretic setting and the proof makes use of the {\em monotonicity-duality method} that was initiated in the constant coefficient pure second order case in Harvey-Lawson \cite{HL09}. To use the method, we require three additional assumptions. First, the constraint set $\cF$ much be sufficiently {\em monotone} in the sense that there exists a constant coefficient subequation $\cM \subsetneq \cJ^2$ which is a {\em monotonicity cone subequation} for $\cF$; that is, in addition to being a subequation it is also a convex cone with vertex at the origin such one has the monotonicity property
\begin{equation*}\label{Mmono:intro}
\cF_x + \cM \subset \cF_, \ \ \forall \, x \in X.
\end{equation*}
We also require that the constraint set $\cF$ is {\em fiberegular} in the sense that the fiber map
\begin{equation}\label{Freg:intro}
\Theta: X \to \scK(\cJ^2) \quad \text{defined by} \quad \Theta(x):= \cF_x, \ x \in X
\end{equation}
is continuous with respect to the Euclidian metric on $X$ and the Hausdorff metric on the closed subsets $\scK(\cJ^2)$ of $\cJ^2$. This notion was introduced in \cite{CP17} in the variable coefficient pure second order case and then was extended to the gradient-free case in \cite{CP21}. Finally, for the elliptic version of comparison \eqref{CP:intro} on $\Omega$, we require that the monotonicity cone $\cM$ admits a function $\psi \in C^2(\Omega) \cap \USC(\overline{\Omega})$  which is strictly $\cM$-subharmonic on $\Omega$. For the the parabolic version of comparison \eqref{CPP:intro} on $\Omega$,
we also require that $\psi$ blows up on the complement of the reduced boundary in the sense that
\begin{equation}\label{SA_parab}
\psi \equiv - \infty \quad \text{on}\  \partial \Omega \setminus \partial^-\Omega.
\end{equation}

The utility of the General Comparison Theorem \ref{thm:GCT} is greatly enhanced by exploiting  by the detailed study of monotonicity cone subequations in \cite{CHLP22}, which we briefly review. There is a three parameter {\em fundamental family} of monotonicity cone subequations (see Definition 5.2 and Remark 5.9 of \cite{CHLP22}) consisting of
\begin{equation*}\label{fundamental_family1:intro}
\cM(\gamma, \cD, R):= \left\{ (r,p,A) \in \cJ^2: \ r \leq - \gamma |p|, \ p \in \cD, \ A \geq \frac{|p|}{R}I \right\}
\end{equation*}
where
\begin{equation*}\label{fundamental_family2:intro}
\gamma \in [0, + \infty), R \in (0, +\infty] \ \text{and} \ \cD \subseteq \R^n \ \text{is a {\em directional cone}},
\end{equation*}
in the sense of Definition \ref{defn:property_D} below. The family is fundamental in the sense that for any monotonicity cone subequation, there exists an element $\cM(\gamma, \cD, R)$ of the familly with $\cM(\gamma, \cD, R) \subset \cM$ (see Theorem 5.10 of \cite{CHLP22}). Hence if $\cF$ is an $\cM$-monotone subequation, then it is $\cM(\gamma, \cD, R)$-monotone for some triple $(\gamma, \cD, R)$. Moreover, from Theorem 6.3 of \cite{CHLP22}, given any element $\cM = \cM(\gamma, \cD, R)$ of the fundamental family, one knows for which domains $\Omega \ssubset \R^n$ there is a $C^2$-strict $\cM$-subharmonic and hence for which domains $\Omega$ one has the comparison principle \eqref{CP:intro} in any potential theory determined by a fiberegular and $\cM$-monotone subequation $\cF$. This leads to the Fundamental Family Comparison Theorem \ref{ffctcs}. There is a simple dichotomy. If $R = + \infty$, then arbitrary bounded domains $\Omega$ may be used, while in the case of $R$ finite, any $\Omega$ which is contained in a translate of the truncated cone $\cD_R := \cD \cap B_R(0)$. 

Next, we describe comparison in the second (operator theoretic) framework.  Here one asks when does comparison \eqref{CP:intro} hold on $\overline{\Omega}$ for each pair $u \in \USC(\overline{\Omega}), w \in \LSC(\overline{\Omega})$ which are {\em $\cG$-admissible viscosity subsolutions, supersolutions} in the sense of Definition \ref{defn:AVS} of a {\em proper elliptic}  equation
\begin{equation}\label{FNE:intro}
	F(J^2u):= F(x,u(x),Du(x),D^2u(x)) = 0, \ \ \forall \, x \in \Omega,
\end{equation}
where $F \in C(\cG)$ with either $\cG = \cJ^2(X)$ (the {\em unconstrained case}) or $\cG \subsetneq \cJ^2(X)$ is a subequation (the {\em constrained case}). The proper ellipticity means the following monotonicity property: for each $x \in X$ and each $(r,p,A) \in \cG_x$ one has
\begin{equation}\label{PE:intro}
F(x,r,p,A) \leq F(x,r + s, p, A + P) \ \quad \forall \, s \leq 0 \ \text{in} \ \R \ \text{and} \  \forall \, P \geq 0 \ \text{in} \ \cS(n).
\end{equation}
Notice that one of the subequation axioms on $\cG$ is the monotonicity property that for each $x \in X$ one has
\begin{equation*}\label{mono:intro}
\cG_x + \cM_0 \subset \cG_x \quad \text{where} \quad \cM_0:= \{(r,p,A): \ s \leq 0 \ \text{in} \ \R \ \text{and} \  \forall \, P \geq 0 \ \text{in} \ \cS(n) \},
\end{equation*}
which is needed in \eqref{PE:intro}. We will refer to $(F, \cG)$ as a {\em proper elliptic pair} (see Definition \ref{defn:PEO}). Notice also that proper ellipticity is the familiar (opposing) monotonicity in solution variable $r$ and the Hessian variable $A$, which we do \underline{not} necessarily assume globally on all of $\cJ^2(X)$ (but do in the unconstrained case). In general, a given operator $F$ must be retricted to a subequation in order to have the minimal monotonicity needed \eqref{PE:intro}. For example, the Monge-Amp\`ere operator $F(x,r,p,A) := {\rm det}(A)$ must be restricted to the constraint $\cG := \{ (r,p,A) \in \cJ^2: \ A \in \cS(n): \ A \geq 0\}$  in order to be increasing in $A$. In this case, the $\cG$-admissible subsolutions are convex and one uses only convex  lower test functions in the definition of $\cG$-admissible subsolutions. 

We will deduce a general operator theoretic comparison Theorem \ref{thm:CP_MME} from the potential theoretic comparison Theorem \ref{thm:GCT} by way of the aforementioned Correspondence Principle of Theorem \ref{thm:corresp_gen}. This correspondence consists of the two equivalences: for every $u \in \USC(X)$
\begin{equation*}\label{Corr1:intro}
u \ \text{is $\cF$-subharmonic on $X$} \ \Leftrightarrow u \ \text{is a $\cG$-admissible subsolution of} \ F(J^2u) = 0 \ \text{on $X$} 
\end{equation*}
and
\begin{equation*}\label{Corr2:intro}
u \ \text{is $\cF$-superharmonic on $X$}  \ \Leftrightarrow  \ u \ \text{is a $\cG$-admissible supersolution of} \ F(J^2u) = 0,
\end{equation*}
where $(F, \cG)$ is a proper elliptic pair and $\cF$ is the constraint set defined by the {\em correspondence relation}
\begin{equation}\label{relation:intro}
\cF = \{ (x,J) \in \cG: \ F(x,J) \geq 0 \}.
\end{equation}
The correspondence principle holds provided that $\cF$ is itself a subequation and provided that one has {\em compatibility}
\begin{equation*}\label{compatibility1:intro}
\intr \cF = \{ (x,J) \in \cG: \ F(x,J) > 0\},
\end{equation*} 
which for subequations $\cF$ defined by \eqref{relation:intro} is equivalent to
\begin{equation*}\label{compatibility2:intro}
\partial  \cF = \{ (x,J) \in \cG: \ F(x,J) = 0\}.
\end{equation*}
Now, $\cF$ is indeed a subequation by Theorem \ref{thm:CMM} if the following three hypotheses are satisfied: (i) $\cG$ is fiberegular in the sense \eqref{Freg:intro} (with $\cF$ replaced by $\cG$), (ii) $(F, \cG)$ is $\cM$-monotone for some monotonicity cone subequation and (iii) $(F, \cG)$ satisfies the following regularity condition: for some fixed $J_0 \in \intr \call M$, given $\Omega \ssubset X$ and  $\eta > 0$, there exists $\delta= \delta(\eta, \Omega) > 0$ such that 
\begin{equation*}\label{UCF:intro}
F(y, J + \eta J_0) \geq F(x, J) \quad \forall J \in \cG_x,\ \forall x,y \in \Omega \ \text{with}\ |x - y| < \delta.
\end{equation*}
Not only is $\cF$ a subequation (and hence the correspondence principle holds), $\cF$ is also fiberegular and $\cM$-monotone. Hence one will have both operator theoretic and potential theoretic comparison in both versions \eqref{CP:intro} and \eqref{CPP:intro} on any domain $\overline{\Omega}$ which admits a $C^2$ strictly $\cM$-subharmonic function (which also satisfies \eqref{SA_parab} in the parabolic version).

Having described the main general comparison theorems in both frameworks, we now place them in context to indicate what is new in the paper. In the important special case of constant coefficient subequations $\cF \subset \cJ^2$ and constant coefficient operators $F \in C(\cG)$ with constant coefficient admissibility constraint $\cG \subseteq \cJ^2$, the entire program (and much more) has been developed in the research monograph \cite{CHLP22}. In particular, it is there in Chapter 11 that the important bridge of the Correspondence Principle was refined into its present form. In the current paper, variable coefficients require the fiberegularity conditions like \eqref{Freg:intro} in order to overcome the essential difficulty that the {\em sup-convolutions} used to approximate upper semicontinuous $\cF$-subharmonics with {\em quasi-convex}\footnote{We have adopted the term quasi-convex which is consistent with the use of {\em quasi-plurisubharmonic} function in several complex variables. Quasi-convex functions are often referred to as {\em semiconvex} functions, although this term is a bit misleading. They are functions whose Hessian (in the viscosity sense) is locally bounded from below.} functions do \underline{not} preserve the property of being $\cF$-subharmonic in the variable coefficient case. Here fiberegularity ensures what we call the {\em uniform translation property} for subharmonics which roughly states that (see Theorem \ref{thm:UTP}): {\em if $u \in \cF(\Omega)$, then there are small $C^2$ strictly $\cF$-subharmonic perturbations of \underline{all small translates} of  $u$ which belong to  $\cF(\Omega_{\delta})$}, where  $\Omega_{\delta}:= \{ x \in \Omega: d(x, \partial \Omega) > \delta \}$.

The formulation of the fiberegularity condition on $\cF$ and the proof that it implies the uniform translation property was done first in the variable coefficient pure second order case where
$$
	\cF \subset X \times \cS(n) \quad \text{and} \quad F = F(x,A) \in C(\cG) \ \text{with} \ \cG \subset X \times \cS(n)
$$
in \cite{CP17} and then extended to the variable coefficient gradient-free second order case where
$$
\cF \subset X \times \R \times \cS(n) \quad \text{and} \quad F = F(x,r,A) \in C(\cG) \ \text{with} \ \cG \subset X \times \R \times \cS(n)
$$
in \cite{CP21}. However, in these papers, the term fiberegularity was not used. The term fiberegularity was coined in the production of the survey paper \cite{HP22}. The terminology of \cite{CP17} and \cite{CP21} borrowed much from the fundamental paper of Krylov \cite{Kv95} on the general notion of ellipticity. More importantly, the form of the correspondence principle in \cite{CP17} and \cite{CP21} was more rudimentary than the form described above.
The present paper adds additional results and refinements to the variable coefficient pure second order and gradient-free situations of \cite{CP17} and \cite{CP21}, which heavily benefit from the investigation of the constant coefficient case in \cite{CHLP22}.

This brings us to the main issue of this paper, which is establishing comparison in both elliptic and parabolic versions \eqref{CP:intro} and \eqref{CPP:intro} for variable coefficient potential theories and PDEs with {\em directionality} in the gradient variables.  
\begin{defn}\label{defn:property_D} A closed convex cone $\cD \subseteq \R^n$ (possibly all of $\R^n$) with vertex at the origin and $\intr \cD \neq \emptyset$ will be called a {\em directional cone}. We say that a subequation $\cF \subset \cJ^2(X)$ on an open subset $X$ satisfies the {\em directionality condition (with respect to $\cD$)} if 
	\begin{equation*}\label{DC1}
	{\rm (D)} \ \ \ \ (r,p,A) \in \cF_x \ \ \text{and} \ \  q \in \cD \ \ \Rightarrow\ \ (r,p + q,A)\in \cF_x, \ \ \forall \, x \in X.
	\end{equation*}
\end{defn} 
Notice that when $\cD = \R^n$, the $\cD$-directionality just means that $\cF$ is gradient-free. Hence we will be particularly interested in  situations where there is $\cD$-directionality of the subequation $\cF$ with a directional cone $\cD \subsetneq \R^n$,  in order to extend what is known from the gradient-free case in \cite{CP21}. Similarly, for a given proper elliptic pair $(F, \cG)$ we will be particularly interested in situations in which $\cG$ satisfies (D) with $\cD \subsetneq \R^n$ and the natural property of directionality in the gradient variables
	\begin{equation*}\label{DC2}
 F(x,r,p + q,A) \geq F(x,r,p,A) \quad \text{for each } \ (r,p,A)\in \cG_x, q \in \cD, x \in X.
\end{equation*}

Some interesting directional cones $\cD \subsetneq \R^n$ are given in Example 12.33 of \cite{CHLP22} and we recall two of them here:
\begin{equation}\label{parab_cone}
\cD  = \{p = (p',p_n): \ p_n \geq 0\} \quad \text{(a half-space)};
\end{equation}
\begin{equation}\label{pos_cone}
\cD  = \{p = (p_1, \ldots, p_n): \ p_j \geq 0, \ \ j = 1, \ldots, n\} \quad \text{(the positive cone)}.
\end{equation}

We now describe two example applications of the general comparison theorems to interesting fully nonlinear PDEs with directionality in the gradient from Section \ref{sec:examples}; an elliptic example and a parabolic example. The elliptic example concerns equations that arise the theory of {\em optimal transport} and is the following example.
\begin{example}[Example \ref{exe:OTE} (Optimal transport)]\label{exe:OTE:intro}
	The equation 
	\begin{equation}\label{e:ot:intro}
g(Du) \det(D^2 u) = f(x), \quad x \in \Omega \ssubset \R^n
\end{equation}
describes the optimal transport plan from a source density $f$ to a target density $g$. In Proposition \ref{prop:OTE} we will prove the elliptic version of comparison under the hypotheses that
\begin{equation}\label{fg_ot:intro}
f, g \in C(\overline{\Omega}) \ \ \text{and are nonnegative}
\end{equation}
and that $g$ is $\cD$-directional with respect to some directional cone $\cD \subsetneq \R^n$; that is,
	\begin{equation}\label{g_ot1:intro}
g(p + q) \ge g(p), \quad \text{for each} \ p,q \in \cD.
\end{equation}
We also require some measure of strict directionality in the sense that there exists $\bar q \in \intr \cD$ and a modulus of continuity $\omega : (0,\infty) \to (0,\infty)$ (satisfying $\omega(0^+) = 0$) such that   
	\begin{equation}\label{g_ot2:intro}
	g(p + \eta \bar q) \ge g(p) + \omega(\eta), \quad \text{for each} \ p,q \in \cD \ \text{and each} \ \eta > 0.
	\end{equation}
The natural operator $F$ associated to \eqref{e:ot:intro} is defined $F(x,r,p,A):= g(p) {\rm det}(A) - f(x)$ and is proper elliptic when restricted to $A \geq 0$ in $\cS(n)$. The compatible subequation $\cF$ with fibers
\begin{equation*}\label{SE:OTE}
		\cF_x := \{(r,p,A) \in \cJ^2: \ p \in \cD,  A \geq 0 \ \text{in} \ \cS(n) \geq 0 \ \text{and} \ F(x,r,p,A) \geq 0\}
\end{equation*}
is fiberegular and $\cM$-monotone for
\begin{equation*}\label{MC:OTE}
\cM = \cM(\cD, \cP) := \{(r,p,A) \in \cJ^2: \ p \in \cD \ \text{and} \ A \geq 0 \ \text{in} \ \cS(n)\}.
\end{equation*}
As shown in \cite{CHLP22}, these cones admit $C^2$ strictly $\cM$ subharmonics on all bounded domains so one has potential theoretic comparison for $\cF$ as well as operator theoretic comparison for $\cG$-admissible subsolutions, supersolutions of \eqref{e:ot:intro} with $\cG = \cM(\cD, \cP)$.
	\end{example}

The parabolic example that we describe is a prototype of a fully nonlinear PDE which is weakly parabolic in the sense of Krylov and also indicates the utility of half-space cones (in the gradient variable) \eqref{parab_cone} in this parabolic context. 

\begin{example}[Example \ref{exe:KPO} (Krylov's parabolic Monge-Amp\`ere operator)]\label{exe:KPO:intro} In \cite{Krylov76}, the following nonlinear parabolic equation is considered
	\begin{equation}\label{e:kry:intro}
	-\partial_t u \det (D_x^2 u) = f(x,t), \quad   (x,t) \in X :=\Omega \times (0,T) \subset \R^{n+1},
	\end{equation}
	where $\Omega \ssubset \R^n$ is open and $T > 0$. The reduced boundary of the parabolic cylinder $X$ is
	\begin{equation*}\label{par_bdy:intro}
		\partial^-X := \left(\Omega \times \{0\}\right) \cup \left(\partial \Omega \times (0, T)\right),
	\end{equation*}
	which is the usual parabolic boundary of $X$. In Proposition \ref{prop:KPO}, for arbitrary bounded parabolic cylinders $X$ and $f \in C(\overline{X})$ nonnegative, we prove the parabolic version of comparison
	\begin{equation}\label{par_CP:intro}
		u \leq v \ \text{on} \ \partial^- X \ \ \Rightarrow \ \ 	u \leq v \ \text{in} \  X,
\end{equation}
	for $\cG$-admissible subsolutions, supersolutions $u,v$ of the equation \eqref{e:kry:intro}, where the admissibility constraint is the natural constant coefficient subequation with constant fibers
	\begin{equation*}\label{KPO_constraint:intro}
	\cG := \cM(\cD_n,\cP_n) :=  \{(r, p, A) \in \R \times \R^{n+1} \times \cS(n+1): p_{n +1} \leq 0 \ \text{and} \ A_n \geq 0 \},
	\end{equation*}
where $A_n \in \cS(n)$ is the upper-left $n \times n$  submatrix of $A$. This is because the compatible subequation $\cF$ with fibers
$$
\cF_{(x,t)} := \{ (r,p,A) \in \cM(\cD_n, \cP_n): \ F((x,t),r,p,A) := -p_{n+1} {\rm det}(A) - f(x,t) \geq 0 \}
$$
is fiberegular and $\cM(\cD_n, \cP_n)$-monotone, where every parabolic cylinder $X$ admits a strictly $C^2$ and $\cM(\cD_n, \cP_n)$-subharmonic function $\psi$ which satisfies
$$
\psi \equiv - \infty \quad \text{on}\  \partial X \setminus \partial^-X,
$$
and hence the  parabolic version of Theorem \ref{thm:CP_MME} applies.  
\end{example}

Many additional examples of fully nonlinear operators with directionality in the gradient variables can be constructed from {\em Dirichlet-G\aa rding polynomials} on the vector space $V = \R^n$, which are homogeneous polynomials $\gf$ of degree $m$ are {\em hyperbolic} with respect to the direction $q \in \R^n$ in the sense that the one-variable polynomial 
\begin{equation}\label{hyp_poly:intro}
t \mapsto \gf(tq + p) \ \text{has exaclty $m$ real roots for each $p \in \R^n$.}
\end{equation}
See Definition \ref{defn:hyp_poly} and the brief discussion which follows on concerning elements of G\aa rding's theory of hyperbolic polynomials. The key point is that one represent the first order operator determined by $\gf$ as a {\em generalized Monge-Amp\`ere operator} in the sense that
\begin{equation*}\label{GMAO:intro}
	\gf(p) = \lambda_1^{\gf}(p) \cdots \lambda_m^{\gf}(p).
\end{equation*}
For $k = 1, \ldots , m$, the factor $\lambda_k^{\gf}(p)$ is the {\em $k$-th G\aa rding $q$-eigenvalue of $\gf$}, which is just the negative of the $k$-th root in \eqref{hyp_poly:intro} (reordered so that $\lambda_1^{\gf}(p) \leq \lambda_2^{\gf}(p) \cdots \leq \lambda_m^{\gf}(p)$).
There is always a natural monotonicty cone $\overline{\Gamma}$ (the {\em (closed) G\aa rding cone}) for a hyperbolic polynomial $\gf$, which in the case $V = \R^n$ is a directional cone $\cD$. 

In order to illustrate the construction above, in Example \ref{exe:hyp_poly} we discuss the polynomial defined for $p = (p_1, p_2) \in \R^2$ by
\begin{equation*}\label{exe:hyp_poly:intro}
\gf(p_1,p_2) := p_1^2 - p_2^2 = \lambda_1^{\gf}(p) \lambda_2^{\gf}(p) = (p_1 - |p_2|)(p_1 + |p_2|),
\end{equation*}
which determines the pure first order fully nonlinear equation 
\begin{equation}\label{PFO:intro}
	u_x^2 - u_y^2 = 0, \quad (x,y) \in \R^2.
\end{equation}
The associated directional cone is
\begin{equation}\label{DC:intro}
	\cD_{\gf} = \overline{\Gamma} = \big\{ p \in \R^2: \ \lambda_1^{\gf}(p) := p_1 - |p_2| \geq 0\}.
\end{equation}
In Proposition \ref{prop:DGO} we prove a parabolic version of comparison on rectangular domains $\Omega \subset \R^2$ for $\cG$-admissible subsolutions, supersolutions of \eqref{PFO:intro} with respect to the natural admissibility constraint
\begin{equation*}\label{DGO_constraint:intro}
\cG= \cM_{\gf} := \R \times \cD_{\gf} \times \cS(2),
\end{equation*}
which is also the monotonicity cone subequation for comparison.

As a final example of a fully nonlinear operator with directionality in the gradient, we will discuss the following interesting operator.

\begin{example}[Example \ref{exe:PMAD} Perturbed Monge-Amp\`ere operators with directionality]\label{exe:PMAD:intro}
	On a bounded domain $\Omega \subset \R^n$, consider the operator defined by
\begin{equation}\label{PMO:intro}
	F(x,r,p,A) = F(x,p,A) \defeq \det\!\big(A + M(x,p)\big) - f(x), \ \ (x,r,p,A) \in \Omega \times \cJ^2
\end{equation}
with $f \in UC(\Omega; [0,+\infty))$ and with $M \in UC(\Omega \times \R^n; \call S(n))$ of the form
\begin{equation*} \label{Mex1:intro}
M(x,p) \defeq \pair{b(x)}{p} P(x)
\end{equation*}
with $P \in UC(\Omega; \cP)$ and $b \in UC(\Omega; \R^n)$ such that
\begin{gather*}
\label{condexcone:intro}
\text{there exists a unit vector $\nu \in \R^n$ such that $\pair{b(x)}{\nu} \geq 0$ for each $x \in \Omega$.}
\end{gather*}
Such operators with $M = M(x)$ have been proposed by Krylov as interesting test cases for probabilistic and analytic methods. Our interest in this example is two fold. On the one hand, we can prove the elliptic version of comparison using our methods with a natural directional cone 
\[
\call D \defeq \bigcap_{x \in \Omega} H^+_{b(x)} \quad \text{where} \quad H_{b(x)}^+ \defeq \big\{ q \in \R^n :\ \pair{b(x)}{q} \geq 0 \big\}.
\]
See the discussion in Example \ref{exe:PMAD}. On the other hand, we show in Proposition \ref{prop:fail} standard viscosity structural conditions fail for $F$, and hence this example shows that our methods can provide comparison in non standard cases with directionality in the gradient (as was already known in the pure second order and gradient-free settings from \cite{CP17} and \cite{CP21}).

\end{example}

\noticina{slight lingustic change in text} While our main focus here is on the (strict) directionality with respect to first order terms, we stress that our theory allows us to treat operators that are parabolic in a broad sense. For instance, pure linear second-order operators ${\rm tr}(B D^2 u)$ are classically considered to be parabolic provided that $B \ge 0$ and $\det B = 0$. Hence, there is a natural nontrivial monotonicity cone associated to the (restricted) subequation $\cF = \{A : {\rm tr}(B A) \ge 0\}$ which is $\cM = \cF$. Therefore, any operator $F$ that can be paired with a subequation with such monotonicity cone $\cM$ can be regarded as parabolic, and specific comparison principles on suitable restricted boundaries can be easily deduced.

\smallskip

Fundamental to the entire project we discuss here is the groundbreaking paper of Harvey and Lawson \cite{HL09} which examined the potential theoretic comparison as well as existence via Perron's method in the constant coefficient case pure second order case for potential theories
$$
	\cF \subset \cS(n).
	$$
No correspondence principle is found there, as the focus was on the potential theory side. This is because in the geometric situations of interest to them, often there are no natural operators associated to geometric potential theories of interest. It was in \cite{HL09} that Krylov's fundamental insight to associate constraint sets which encode the natural notion of ellipticity for differential operators takes shape and is encoded by them in the language of general (nonlinear) potential theories. Moreover, in \cite{HL09} the natural notion of duality (the {\em Dirichlet dual}) is formalized. It is implicit in \cite{Kv95}, but made explicit in \cite{HL09} and used elegantly to clarify the notion of supersolutions in constrained cases and as a crucial ingredient of their {\em monotonicity-duality method} for proving comparison. This method is presented in Section \ref{sec:SAaC}, which for the first time incorporates the parabolic version into the crucial {\em Zero Maximum Principle} (ZMP) in Theorem \ref{thm:zmp} (note that this is new also for pure second order and gradient-free settings).

\noticina{Good idea, Marco! Amplified discussion a bit with reference also to HL - InHom paper} 
Two remarks on the crucial fiberegularity condition (the continuity of the fiber map $\Theta$ of \eqref{Freg:intro}) are in order. First, the recent interesting work of Brustad \cite{B23} in the pure second order setting (operators without first and zero-th order terms), introduces a regularity property for the fiber map $\Theta$ which is weaker than the fiberegularity used here. A concise discussion this weaker condition is given in the introduction of \cite{B23} which aims to incorporate the best features of fiberegulairty and standard viscosity structual conditions in this case. Second, the recent important paper of Harvey and Lawson \cite{HLInHom}, which studies the Dirichlet problem for {\em inhomogeneous equations} on manifolds $X$
	$$
		F(J^2u) = \psi(x), \ \ x \in X,
	$$
under the assumptions that $(F, \cG)$ is an $\cM$-monotone compatible operator-subequation pair for which the operator is {\em tame}. In the constant coefficient case on $\R^n$ this condition requires that for every $s, \lambda > 0$ there exists $c(s,\lambda) > 0$ such that
\begin{equation}\label{tame}
	F(J + (r,0,P)) - F(J) \geq c(s,\lambda), \quad \forall \, J \in \cG \ \text{and} \ P \geq \lambda I \ \text{in} \  \cS(n).
\end{equation}
This property, which not comparable to the fiberegularity of $\cF:= \{ J \in \cG: \ F(J) \geq 0\}$, plays the same role as fiberegularity in this inhomogeneous setting.

We conclude this introduction with a brief description of the contents. Part 1 of the paper (Sections \ref{sec:NPT} - \ref{sec:Char}) concerns the potential theoretic setting, including the elliptic and parabolic versions of comparison by the monotonicity-duality method in the presence of fiberegularity. Section \ref{sec:Char} also gives some new characterizations of {\em dual cone subharmonics} that play a crucial role in comparison by way of the (ZMP). Part 2 of the paper is Section \ref{sec:correspondence} which builds the bridge between the potential theoretic framework and the operator theoretic framework by way of the correspondence principle. Part 3 of the paper treats comparison in the operator theoretic framework and is highlighted by the examples mentioned above. 

In addition there are three appendices. Appendix \ref{AppA} contains many new auxilliary technical results needed to complete the proof of of Theorem \ref{thm:MMS} which proves that given an $\cM$-monotone pair $(F, \cG)$ the natural constraint set $\cF$ defined by the correspondence relation
\begin{equation*}\label{relation1:intro}
\cF:= \{ (x,J) \in \cG: \ F(x,J) \geq 0 \},
\end{equation*} 
is a fiberegular $\cM$-monotone subequation if $\cG$ is fiberegular. This theorem plays an important role in the general PDE comparison principle Theorem \ref{thm:CP_MME}. Appendix \ref{AppB} collects some known results which are fundamental for the potential theoretic methods and is included for the convenience of the reader. Appendix \ref{AppC} recalls some elementary facts about the Hausdorff distance which are used in the discussion of fiberegularity in Section \ref{sec:FR}.

\section{Background notions from nonlinear potential theory}\label{sec:NPT}

In this section, we give a brief review of some key notions and fundamental results in the theory of $\cF$-subharmonic functions defined by a subequation constraint set $\cF$.

\subsection{Subequation constraint sets and their subharmonics}\label{subsec:subeqs}

Suppose that $X$ is an open subset of $\R^n$ with $2$-jet space denoted by  $\cJ^2(X) = X \times \call J^2 = X \times (\R \times \R^n \times \cS(n))$. A good definition of a constraint set with a robust potential theory was given in \cite{HL11} (also for manifolds).
\begin{defn}[Subequations]\label{defn:subeq} A set $\cF \subset \cJ^2(X)$ is called a {\em subequation (constraint set)} if
		\begin{itemize}
		\item[(P)] $\cF$ satisfies the {\em positivity condition}
		(fiberwise); that is, for each $x \in X$
		$$
		(r,p,A) \in \cF_x \ \ \Rightarrow \ \ (r,p,A + P) \in \cF_x, \ \ \forall \, P \geq 0 \ \text{in} \ \cS(n).
		$$
		\item[(T)]  $\cF$ satisfies three conditions of {\em topological stability}\footnote{Here and below $\intr \cF$ denotes the interior of a subset $\cF$ of a topological space.} : 
		\begin{gather}\tag{T1} 
		\cF = \overline{\intr \cF}; \\
		\tag{T2} 
		\cF_x = \overline{\intr \left( \cF_x \right)}, \ \ \forall \, x \in X; \\
		\tag{T3} \left( \intr \cF \right)_x = \intr \left( \cF_x \right), \ \ \forall \, x \in X.
		\end{gather}
		\item[(N)]  $\cF$ satisfies the {\em negativity condition}
		(fiberwise); that is, for each $x \in X$
		$$
		(r,p,A) \in \cF_x \ \ \Rightarrow \ \ (r + s,p,A) \in \cF_x, \ \ \forall \, s \leq 0 \ \text{in} \ \R.
		$$
	\end{itemize}
\end{defn}
Notice that by property (T3) we can write without ambiguity $\intr \cF_x$ for the subset of $\cJ^2$, which can be calculated in two ways. The conditions (P), (T) and (N) have various (important) implications for the potential theory determined by $\cF$. Some of these will be mentioned below (see the brief discussion following Definition \ref{defn:Fsub}). In addition, the conditions (P) and (N) are {\bf {\em monotonicity}} properties; monotonicity plays a central and unifying role as will be discussed in Subsection \ref{sec:monotonicity}. The role of property (T) is clarified by the notion of {\bf {\em duality}}; another fundamental concept that will be discussed in Subsection \ref{sec:duality}.  For now, notice that  by property (T1), $\cF$ is closed in $\cJ^2(X)$ and each fiber $\cF_x$ is closed in $\cJ^2$ by (T2). In addition, the interesting case is when each fiber $\cF_x$ is not all of $\cJ^2$, which we almost always assume. Also notice that in the constant coefficient pure second order case where the (reduced) subequation \footnote{See Subsection \ref{sec:monotonicity} for a discussion on reduced subequations.} can be identified with a subset $\cF  \subset \cS(n)$, property (N) is automatic and property (T) reduces to (T1)  $\cF = \overline{\intr \cF}$, which is implied by (P) for $\cF$ closed. Hence in this case, subequations $\cF \subset \cS(n)$ are closed sets simply satisfying (P). Additional considerations on property (T) will be discussed in Appendix \ref{AppA}.

Next we recall the notion of {\bf {\em $\cF$-subharmonicity}} for a given subequation $\cF \subset \cJ^2(X)$. There are two different natural formulations for differing degrees of regularity.
The first is the classical formulation.

\begin{defn}[Classical or $C^2$ subharmonics]\label{defn:CSH} A function $u \in C^2(X)$ is said to be {\em $\cF$-subharmonic on $X$} if
	\begin{equation}\label{VsubClass}
	J^2_x u := (u(x), Du(x), D^2u(x)) \in \cF_x, \ \ \forall \, x \in X
	\end{equation}
	with the accompanying notion of being {\em strictly $\cF$-subharmonic} if 
	\begin{equation}\label{VsubCS}
	J^2_x u \in \intr (\cF_x) = (\intr \cF)_x, \forall \, x \in X.
	\end{equation}
\end{defn}

For merely upper semicontinuous functions $u \in \USC(X)$ with values in $[-\infty, + \infty)$, one replaces the $2$-jet $J^2_x u$ with the set of {\em $C^2$ upper test jets}
\begin{equation}\label{UCJ}
J^{2,+}_{x} u := \{ J^2_{x} \varphi:  \varphi \ \text{is} \ C^2 \ \text{near} \ x, \  u \leq \varphi \ \text{near} \  x \ \text{with equality at} \ x \},
\end{equation}
thus arriving at the following viscosity formulation.

\begin{defn}[Semicontinuous subharmonics]\label{defn:Fsub}  A function $u \in \USC(X)$ is said to be {\em $\cF$-subharmonic on $X$} if
	\begin{equation}\label{Vsub}
	J^{2,+}_x u \subset \cF_x, \ \ \forall \, x \in X.
	\end{equation}
	We denote by $\cF(X)$ the set of all $\cF$-subharmonics on $X$.
\end{defn} 

We now recall some of the implications that properties (P), (T) and (N) have on an $\cF$-potential theory. Property (P) ensures that Definition \ref{defn:Fsub} is meaningful since for each $u \in \USC(X)$ and for each $x_0 \in X$ one has property (P) for the upper test jets
\begin{equation}\label{JIP}
	(r,p,A) \in J^{2,+}_{x_0} u\ \ \Rightarrow \ \ (r,p,A + P) \in J^{2,+}_{x_0} u, \ \ \text{for each $P \geq 0$ in $\cS(n)$}. 
\end{equation}
Indeed, given an upper test jet $J^2_{x_0} \varphi =(r,p,A)$ with $\varphi$ a $C^2$ function near $x_0$ and satisfying $ u \leq \varphi$ near   $x_0$ with equality at $x_0$ then, for each $P \geq 0$, the quadratic perturbation $\wt{\varphi}(\cdot):= \varphi(\cdot) + \frac{1}{2} \langle P(\cdot - x_0), (\cdot - x_0) \rangle$ determines an upper test jet $J^2_{x_0} \wt{\varphi} =(r,p,A + P)$. Property (P) is also crucial for {\bf {\em $C^2$-coherence}}, meaning classical $\cF$-subharmonics are $\cF$-subharmonics in the sense \eqref{Vsub}, since for $u$ which is $C^2$ near $x$, one has
$$
J^{2,+}_x u = J_x^2u + (\{0\} \times \{0\} \times \cP )\ \ \text{where} \ \ \cP = \{ P \in \cS(n): \ P \geq 0 \}.
$$
Next note that property (T) insures the local existence of strict classical $\cF$-subharmonics at points $x \in X$ for which $\cF_x$ is non-empty. One simply takes the quadratic polynomial whose $2$-jet at $x$ is $J \in \intr (\cF_x)$. Finally, property
(N) eliminates obvious counterexamples to comparison. The simplest counterexample is provided by the constraint set $\cF \subset \cJ^2(\R)$ in dimension one associated to the equation $u^{\prime \prime} - u = 0$, which is defined by $\cF := \{(r,p,A) \in \R^3: A - r \geq 0 \}$.

\subsection{Duality and superharmonics}\label{sec:duality}  The next fundamental concept  is {\bf {\em duality}}, a notion first introduced in the pure second order coefficient case in \cite{HL09}.

\begin{defn}[Duality for constraint sets]\label{defn:dual} For a given subequation $\cF \subset \cJ^2(X)$ the 
	{\em Dirichlet dual} of $\cF$ is the set $\wt{\cF} \subset \cJ^2(X)$ given by \footnote{Here and below, $c$ denotes the set theoretic complement of a subset.}
	\begin{equation}\label{dual}
	\wt{\cF} :=  (- \intr \cF)^c = - (\intr \cF)^c \ \ \text{(relative to 
		$\cJ^2(X)$)}.
	\end{equation}
\end{defn}
With the help of property (T), the dual can be calculated fiberwise
\begin{equation}\label{dual_fiber}
\wt{\cF}_x :=  (- \intr \cF_x)^c = - (\intr \cF_x)^c \ \ \text{(relative to \  $\cJ^2$)}, \ \ \forall \, x \in X.
\end{equation}
This is a true duality in the sense that one can show the following two facts: 
\begin{equation}\label{true_dual}
\wt{\wt{\cF}} = \cF \quad \quad \quad \text{and} \quad \quad \quad \text{$\cF$ is a subequation \ \ $\Rightarrow$ \ \  $\wt{\cF}$ is a subequation}. 
\end{equation}
Additional (and useful) properties of the dual can be found in Propositions 3.2 and 3.4 of \cite{CHLP22}. These properties include the behavior of the dual with respect to inclusions, intersections and  fiberwise sums:
\begin{equation}\label{dual_inclusion}
\cF \subset \cG \ \ \Rightarrow \ \ \wt{\cG} \subset \wt{\cF};
\end{equation}
\begin{equation}\label{dual_intersections}
\wt{\cF \cap \cG} = \wt{\cF} \cup \wt{\cG};
\end{equation}
\begin{equation}\label{dual_sums}
\cF_x + J \subset \cF_x \ \ \Rightarrow \ \ \wt{\cF}_x + J \subset \wt{\cF}_x \quad \text{for each} \ x \in X \ \text{and} \  J = (r,p,A)  \in \cJ^2.
\end{equation}
This last formula, when combined with the monotonicity discussed below, will lead to the fundamental formula \eqref{mono_dual} for the monotonicity-duality method.

Another importance of duality is that it can be used to reformulate the notion of {\em $\cF$-superharmonic functions} in terms of {\em dual subharmonic functions}. This will have important implications for the correct definition of {\em supersolutions} to a degenerate elliptic PDE $F(J^2u) = 0$ in the presence of {\bf {\em admissibility constraints}}. See Subsection \ref{sec:AVS} for this discussion.

 The natural notion of $w \in \LSC(X)$ being {\em $\cF$-superharmonic} using {\em lower test jets} is  
\begin{equation}\label{Vsuper1}
J^{2,-}_x w \subset \left(\intr (\cF_x)\right)^c, \ \ \forall \, x \in X,
\end{equation}
which by duality and property (T) is equivalent to $-w \in \USC(X)$ satisfying
\begin{equation}\label{Vsuper2}
J^{2,+}_x (-w) \subset \wt{\cF}_x, \ \ \forall \, x \in X.
\end{equation}
That is,  
\begin{equation}\label{Vsuper3}
\text{$w$ is $\cF$-superharmonic \ \ $\Leftrightarrow$ \ \ 	$-w$ is $\wt{\cF}$-subharmonic}.
\end{equation}

\subsection{Monotonicity}\label{sec:monotonicity}
This fundamental notion appears in various guises. It is a useful and unifying concept. One says that a subequation $\cF$ is {\em $\cM$-monotone} for some subset $\cM \subset \cJ^2(X)$ if 
\begin{equation}\label{monotone}
\cF_x + \cM_x \subset \cF_x \ \ \text{for each} \  x \in X.
\end{equation}
For simplicity, we will restrict attention to (constant coefficient) {\em monotonicity cones}; that is, monotonicity sets $\cM$ for $\cF$ which have constant fibers which are closed cones with vertex at the origin. 

First and foremost, the properties (P) and (N) are monotonicity properties. Property (P) for subequations $\cF$ corresponds to {\em degenerate elliptic} operators $F$ and properties (P) and (N) together correspond to {\em proper elliptic} operators. Note that (P) plus (N) can be expressed as the single monotonicity property 
\begin{equation}\label{MM}
\cF_x + \cM_0 \subset \cF_x   \ \ \text{for each} \ x \in X
\end{equation}
where
\begin{equation}\label{MMC}
\cM_0 := \cN \times \{0\} \times \cP \subset \cJ^2 = \R \times \R^n \times \cS(n)
\end{equation}
with
\begin{equation}\label{NP}
\cN := \{ r \in \R : \ r \leq 0 \} \quad \text{and} \quad  \cP := \{ P \in \cS(n) : \ P \geq 0 \} .
\end{equation}
Hence $\cM_0$ will be referred to as the {\em minimal monotonicity cone} in $\cJ^2$. However,  it is important to remember that $\cM_0 \subset \cJ^2$ is {\bf not} a subequation since it has empty interior so that property (T) fails. A monotonicity cone which is also a subequation will be called a {\em monotonicity cone subequation}.

Combined with duality and {\bf {\em fiberegularity}} (defined in Section \ref{sec:FR}), one has a very general, flexible and elegant geometrical approach to comparison when a subequation $\cF$ admits a constant coefficient monotonicity cone subequation $\cM$. We call this approach the {\bf {\em monotonicity-duality method}} and it will be discussed in Section \ref{sec:SAaC}.
One key point in the method is the following {\em monotonicity-duality formula} that combines monotonicity \eqref{monotone} and the duality formula on fiberwise sums \eqref{dual_sums}: 
\begin{equation}\label{mono_dual}
\cF_x + \cM \subset \cF_x \ \ \Rightarrow \ \ \cF_x + \wt{\cF}_x \subset \wt{\cM} \quad \text{for eaxh} \ x \in X.
\end{equation}
It is interesting to note that if a subequation $\cF$ has a constant coefficient monotonicity cone subequation $\cM$ then the fiberwise sum of $\cF$ and its dual $\wt{\cF}$ yields a constant coefficient subequation $\wt{\cM}$ which is also a cone (dual to the monotonicity cone for $\cF$ and $\wt{\cF}$). A detailed study of monotonicity cone subequations can be found in Chapters 4 and 5 of \cite{CHLP22}, including the construction of a {\em fundamental family} of monotonicity cones that is recalled below in \eqref{fundamental_family1}-\eqref{fundamental_family2}.

Monotonicity is also used to formulate {\bf {\em reductions}} when certain jet variables are ``silent'' in the subequation constraint $ \cF$. For example, one has
$$
\text{(pure second order)} \ \ \ \cF_x + \cM(\cP) \subset \cF_x: \ \ \cM(\cP):= \R \times \R^n \times \cP
$$
$$
\text{(gradient free)} \ \ \ \cF_x + \cM(\cN,\cP) \subset \cF_x: \ \ \cM(\cN,\cP):= \cN \times \R^n \times \cP
$$
$\cM(\cP)$ and $\cM(\cN, \cP)$ are fundamental {\em constant coefficient (cone) subequations} which can be identified
with $\cP \subset \cS(n)$ and $\cQ := \cN \times \cP \subset \R \times \cS(n)$. One can identify $\cF$ with subsets of the {\em reduced jet bundles} $X \times \cS(n)$ and $X \times (\R \times \cS(n))$, respectively, ``forgetting about'' the silent jet variables (see Chapter 10 of \cite{CHLP22}). For a more extensive review of the monotonicity, see subsection 2.2 of \cite{HP22}.  

Three important ``reduced'' examples are worth drawing special attention to. They are all monotonicity cone subequations and play a fundamental role in our method. We focus on characterizing their subharmonics and their dual subharmonics. 

\begin{example}[The convexity subequation]\label{exe:CSE} The {\em convexity subequation} is $\cF = X \times \cM(\cP)$ and reduces to $X \times \cP$ which has constant coefficients (each fiber is $\cP$) and for $u \in \USC(X)$ 
	$$
	u \ \text{is $\cP$-subharmonic} \ \ \Leftrightarrow \ \ u \ \text{is locally convex} 
	$$
	(away from any connected components where $u \equiv - \infty$).
	
	The convexity subequation has a so-called {\em canonical operator} 
	 $F  \in C(\cS(n), \R)$ defined by the minimal eigenvalue $F(A) := \lambda_{\rm min}(A)$, for which 
	\begin{equation}\label{PDual}
	\cP = \{ A \in \cS(n): \ \lambda_{\rm min}(A) \geq 0 \}.
	\end{equation} 
	The dual subequation $\wt{\cF}$ has constant fibers given by 
	\begin{equation}\label{Pdual}
	\wt{\cP} = \{ A \in \cS(n) : \lambda_{\rm max}(A) \geq 0 \}
	\end{equation}
	which is the {\em subaffine subequation}. The set $\wt{\cP}(X)$ of dual subharmonics agrees with $\SAff(X)$ the set of {\em subaffine functions} defined as those functions $u \in \USC(X)$ which satisfy the {\em subaffine property} (comparison with affine functions): for every $ \Omega  \ssubset X$ one has
	\begin{equation}\label{SAP}
	u \leq a \ \ \text{on} \ \partial \Omega \ \ \Rightarrow \ \ u \leq a \ \ \text{on} \ \Omega, \ \ \text{for every $a$ affine}. 
	\end{equation}
	The fact that $\wt{\cP}(X) = \SAff(X)$ is shown in \cite{HL09}. The subaffine property for $u$ is stronger than the maximum principle for $u$ since constants are affine functions. It has the advantage over the maximum principle of being a local condition on $u$. This leads to the comparison principle for all pure second order constant coefficient subequations \cite{HL09} and extends to variable coefficient subequations \cite{CP17} using the notion of fiberegularity noted above.
	
\end{example}

\begin{example}[The convexity-negativity subequation]\label{exe:CNSE} The constant coefficient gradient-free subequation $\cF= X \times \cM(\cN, \cP)$ reduces to $X \times \cQ \subset X \times (\R \times \cS(n))$ 
	whose (constant) fibers are
	\begin{equation}\label{Q}
	\cQ = \cN \times \cP = \{ (r,A) \in \R \times \cS(n): r \leq 0 \ \ \text{and} \ \  A \geq 0 \}.
	\end{equation}
	The (reduced) dual subequation has (constant) fibers
	\begin{equation}\label{Qdual}
	\wt{\cQ} = \{ (r,A) \in \R \times \cS(n): r \leq 0 \ \ \text{or} \ \  A \in \wt{\cP} \}.
	\end{equation}
	The set $\wt{\cQ}(X)$ of dual subharmonics agrees with  $\SAff^+(X)$, the set of {\em subaffine plus functions} defined as those functions $u \in \USC(X)$  which satisfy the {\em subaffine plus property}: for every $\Omega \ssubset X$ one has
	\begin{equation}\label{SAPP}
	u \leq a \ \ \text{on} \ \partial \Omega \ \ \Rightarrow \ \ u \leq a \ \ \text{on} \ \Omega, \ \ \text{for every $a$ affine with} \ a_{|\overline{\Omega}} \geq 0. 
	\end{equation}
	from which 
	the Zero Maximum Principle (ZMP) of Theorem \ref{thm:zmp} for $\wt{\cQ}$ subharmonics follows immediately. 
	The fact that $\wt{\cQ}(X) = \SAff^+(X)$ is shown in \cite{CHLP22} together with the additional equivalence 
	\begin{equation}\label{SAPP2}
	\SAff^+(X):= \{ u \in \USC(X): \ u^+ := \max \{u, 0 \} \in \SAff(X) = \wt{\cP}(X) \},
	\end{equation}
	This leads to the comparison principle by the monotonicity-duality method for all gradient free subequations with constant coefficients in \cite{CHLP22} and extends to variable coefficient gradient-free subequations in \cite{CP21}, using the notion of fiberegularity.
\end{example}

The third example is many respects the focus of the present work, as it treats a sufficient monotonicity in the gradient variables for the monotonicity-duality method when the gradient variables are present. In this section, we will limit ourselves to characterizing the subharmonics, which is interesting in its own right. The characterization of the dual subharmonics will be done in section \ref{sec:Char} in the general context of characterizing dual cone subharmonics.

\begin{example}[The directionality subequation]\label{exe:Dmon} Consider a {\em directional cone} $\cD \subset \R^n$ as defined in Definition \ref{defn:property_D}; that is, a closed convex cone with vertex at the origin and non empty interior $\intr \cD$. 
	The {\em directionality subequation} is the constant coefficient pure first order subequation $\cF = X \times \cM(\cD) = X \times (\R \times \cD \times \cS(n))$ reduces to $X \times \cD \subset X \times \R^n$ whose (constant) fibers are the directional cone $\cD$. The (reduced) dual subequation has (constant) fibers given by the Dirichlet dual 
\begin{equation}\label{dual_cone}
	\wt{\cD} := - (\intr \cD)^{\circ} \subset \R^n,
\end{equation}
which is also a directional cone. Two examples
 of directional cones were recalled in \eqref{parab_cone} and \eqref{pos_cone}.
	\end{example}  

The following characterization of $\cM(\cD)$ subharmonics is new. 	
	
\begin{prop}[Directionality subharmonics are increasing in polar directions] Suppose that $\cD \subset \R^n$ is a directional cone with polar cone\footnote{We follow the convention of \cite{CHLP22} in calling $\cD^{\circ}$ defined by \eqref{polar_cone} the polar cone determined by the set $\cD$. Some call this set the {\em dual cone} and denote it by $\cD^*$ and then define the polar cone as $-\cD^*$. Our choice avoids confusion with the (Dirichlet) dual cone \eqref{dual_cone}.}
\begin{equation}\label{polar_cone}
\cD^{\circ} = \{q \in \R^n : q \cdot p \ge 0 \ \ \forall p \in \cD \}.
\end{equation}
The set of $\cM(\cD)$-subharmonics can be characterized as follows: $u \in \USC(X)$ is $\cM(\cD)$-subharmonic on $X$ if and only if 
\begin{equation}\label{Dmon}
u(x) - u(x_0) \ge 0 \quad \text{for every $x, x_0 \in X$ such that $[x_0, x] \subset X,\ x-x_0 \in \cD^\circ$}. 
\end{equation}
\end{prop}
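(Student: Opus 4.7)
Since the fiber $\R \times \cD \times \cS(n)$ of $\cM(\cD)$ constrains only the gradient slot, $u \in \cM(\cD)(X)$ is equivalent to the statement that for every $y \in X$ and every upper test jet $(r,p,A) \in J^{2,+}_y u$ one has $p \in \cD$. This reformulation reduces the characterization to the classical monotonicity property enjoyed by viscosity subsolutions of first-order linear transport operators, tested against all directions in $\cD^\circ$.

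The $(\Leftarrow)$ implication is the easier one. Assume \eqref{Dmon}. Given $(r,p,A) \in J^{2,+}_{x_0} u$ realized by a $C^2$ test function $\varphi \geq u$ near $x_0$ with $\varphi(x_0) = u(x_0)$, pick any $q \in \cD^\circ$. Openness of $X$ provides $t_0 > 0$ with $[x_0, x_0 + t_0 q] \subset X$, and the hypothesis yields $u(x_0 + tq) \geq u(x_0)$ for $t \in [0, t_0]$, so that
\[
\varphi(x_0 + tq) - \varphi(x_0) \geq u(x_0 + tq) - u(x_0) \geq 0.
\]
Taylor-expanding $\varphi$ and dividing by $t$, we get $p \cdot q + O(t) \geq 0$; letting $t \to 0^+$ yields $p \cdot q \geq 0$ for every $q \in \cD^\circ$, hence $p \in (\cD^\circ)^\circ = \cD$ by the bipolar theorem for closed convex cones.

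For the converse $(\Rightarrow)$, assume $u \in \cM(\cD)(X)$ and fix an arbitrary $q \in \cD^\circ$. The reformulation above says that every upper test gradient of $u$ at every point $y \in X$ satisfies $p \cdot q \geq 0$, i.e., $u$ is a viscosity subsolution on $X$ of the linear operator $-q \cdot Du \leq 0$. The plan is to reduce this to the classical (smooth) case by sup-convolution: define
\[
u^\epsilon(x) := \sup_{y \in X}\Big[u(y) - \tfrac{1}{2\epsilon}|x - y|^2\Big]
\]
on $X_\epsilon \subset X$. Standard facts give that $u^\epsilon$ is locally Lipschitz, semiconvex, satisfies $u^\epsilon \searrow u$ as $\epsilon \to 0^+$, and inherits the viscosity subsolution property from $u$ (because every upper test jet of $u^\epsilon$ at a point is an upper test jet of $u$ at the sup-attainment point). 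Being locally Lipschitz, $u^\epsilon$ is absolutely continuous along any line, and its classical directional derivative $q \cdot Du^\epsilon$ exists a.e.\ and is nonnegative at every differentiability point. Integrating along the segment, $u^\epsilon(x_0 + tq) - u^\epsilon(x_0) = \int_0^t q \cdot Du^\epsilon(x_0 + sq)\, ds \geq 0$ whenever $[x_0, x_0 + tq] \subset X_\epsilon$. Letting $\epsilon \to 0^+$ and using $u^\epsilon \searrow u$ gives $u(x_0 + tq) \geq u(x_0)$; varying $q \in \cD^\circ$ and noting that every admissible pair $(x_0, x)$ with $x - x_0 \in \cD^\circ$ fits this template completes the proof.

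\textbf{Main obstacle.} The single technical point to verify is that sup-convolution preserves the subsolution property, i.e., that an upper test jet of $u^\epsilon$ at $x$ is an upper test jet of $u$ at the (nearby) point $y^*$ where the sup is attained; this is classical in the viscosity literature but warrants being stated explicitly in the present semicontinuous setting. Once this is granted, passage to the limit $\epsilon \to 0^+$ is immediate from $u^\epsilon \searrow u$ and the fact that $X_\epsilon \uparrow X$.
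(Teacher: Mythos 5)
Your $(\Leftarrow)$ direction coincides with the paper's argument (Taylor expansion of the upper test function along directions $q \in \cD^\circ$, then the bipolar identity $(\cD^\circ)^\circ = \cD$), and your use of sup-convolution to regularize in the $(\Rightarrow)$ direction is also the paper's starting move. The gap is in your final integration step. Rademacher/Alexandroff only give that $u^\epsilon$ is (twice) differentiable, with $Du^\epsilon \in \cD$, at Lebesgue-almost every point of $\R^n$; this says nothing about almost every point of the \emph{particular} segment $[x_0, x_0+tq]$, which is a Lebesgue-null set. A semiconvex function can be non-differentiable at every point of a given segment (a ridge, e.g.\ $x \mapsto |x_2|$ along a line contained in $\{x_2=0\}$), so the identity $\tfrac{d}{ds}\,u^\epsilon(x_0+sq) = q\cdot Du^\epsilon(x_0+sq) \ge 0$ for a.e.\ $s$, on which your fundamental-theorem-of-calculus argument rests, is not justified as written. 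A related inaccuracy: the viscosity subsolution property only bites at points admitting $C^2$ upper test functions, and for a semiconvex function a point of mere first-order differentiability need not admit any such test function; the correct pointwise statement is $Du^\epsilon \in \cD$ at the a.e.\ points of \emph{twice} differentiability (Alexandroff, via coherence), which is again an $n$-dimensional a.e.\ statement and does not transfer to your fixed line.

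This is precisely the difficulty the paper's nonsmooth machinery is designed to handle: since $\cD$ is closed and convex, Clarke's generalized gradient $\partial u^\epsilon(x)$ (the convex hull of limits of gradients taken at nearby differentiability points) is contained in $\cD$ at \emph{every} $x$, and Lebourg's mean value theorem then yields $u^\epsilon(x)-u^\epsilon(x_0) \in \langle \partial u^\epsilon(\xi), x-x_0\rangle \subset [0,+\infty)$ for the given segment. Your route can be repaired more elementarily by a Fubini-plus-continuity step: for a.e.\ small translate of the segment in directions orthogonal to $q$, a.e.\ point of the translated segment is an Alexandroff point, so monotonicity holds along those translates, and the local Lipschitz continuity of $u^\epsilon$ lets you pass back to the original segment before sending $\epsilon \to 0^+$ as you do. Two minor points: to keep $u^\epsilon$ finite, the supremum attained, and the attainment point interior, you should localize to $\Omega \Subset X$ (where $u$ is bounded above by upper semicontinuity), as the paper does, rather than take the supremum over all of $X$; and the preservation of $\cM(\cD)$-subharmonicity under sup-convolution, which you flag as the only delicate point, is indeed the standard constant-coefficient fact and is not where the difficulty lies.
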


\begin{proof}

Indeed, assume first that \eqref{Dmon} holds. By Definition \ref{defn:Fsub}, we need to show that for each $x_0 \in X$ and for each upper test jet $(\varphi(x_0), D\varphi(x_0), D^2 \varphi(x_0)) \in J^{2,+}_{x_0} u$ we have $D\varphi(x_0) \in \cD$, which is equivalent to 
\begin{equation}\label{direct1}
\langle D\varphi(x_0),  q \rangle \ge 0 \qquad \forall \, q \in \cD^\circ,\ |q| = 1.
\end{equation}
For any unit vector $q$ in $\cD^{\circ}$ to be used in \eqref{direct1}, consider $x = x_0 + r q$ with $r > 0$. Since $\varphi$ is an upper test function for $u$ in $x_0$ we have 
$u(x) - u(x_0) \le \varphi(x) - \varphi(x_0)$ for each $r > 0$ sufficiently small. In addition, since $x_0 \in X$ with $X$ open and $\cD^{\circ}$ is a cone, we have $[x_0,x] \subset X$ and $x - x_0 = rq \in \cD^{\circ}$. Therefore, by a Taylor expansion of $\varphi$ and \eqref{Dmon},
\[
0 \le u(x) - u(x_0) \le \varphi(x) - \varphi(x_0)  = r \left( \langle D\phi(x_0), q \rangle + o(1) \right), \ \ r \to 0^+.
\]
Dividing by $r >0$ and taking the limit $r \to 0^+$ yields $\langle D\phi(x_0) , q \rangle \ge 0$, which is the desired inequality \eqref{direct1}.

To show the other implication, we need some machinery from nonsmooth and convex analysis. First, for an $\cM(\cD)$-subharmonic function $u$ we consider the sequence of sup-convolutions $u^\veps$ (see \eqref{sup_conv} below). We have that $u^{\veps}$ is $\frac{1}{\veps}$-quasi-convex and decreases pointwise to $u$ as $\veps \dto 0$. Moreover, for any $\Omega \subset\subset X$, since $\cM(\cD)$ has constant coefficients, $u^{\veps}$ is $\cM(\cD)$-subharmonic on $\Omega$ for $\veps$ small enough and, by Alexandroff's theorem, $u^{\veps}$ is almost everywhere twice differentiable, so
\[
D u^{\veps}(x) \in \cD \qquad \text{for a.e. $x \in \Omega$.}
\]
Note that for any such point, $D u^{\veps}(x)$ represents the generalized subgradient $\partial u^{\veps}(x)$ (see for example \cite[Section 2]{Clarke}). In fact, for every $x \in \Omega$, $\partial u^{\veps}(x)$ is given by the convex hull of limit points of (converging) sequences $D u^{\veps}(x_n)$, where $x_n \to x$ (see \cite[Theorem 2.5.1]{Clarke}). Since we can choose $x_n$ such that $D u^{\veps}(x_n) \in \cD$ and $\cD$ is a closed convex cone, we get that $\partial u^{\veps}(x) \subseteq \cD$ for every $x \in \Omega$, and therefore $\langle \partial u^{\veps}(x) , q \rangle$ is a subset of nonnegative reals for every $q \in \cD^\circ$ and $x \in \Omega$. Finally, if $[x_0, x] \subset \Omega$ and $x-x_0 \in \cD^\circ$, one applies Lebourg's Mean Value Theorem \cite[Theorem 2.3.7]{Clarke} to obtain for some $\xi \in (x_0, x)$
\[
u^{\veps}(x) - u^{\veps}(x_0) \in \langle \partial u^{\veps}(\xi) , (x-x_0) \rangle,
\]
so that $u^{\veps}(x) \ge u^{\veps}(x_0)$. Passing to the limit $\veps \to 0^+$ yields the defired conclusion \eqref{Dmon}.
\end{proof}

\section{Fiberegularity}\label{sec:FR}

In this section we discuss a fundamental notion which is crucial in the passage from constant coefficient subequations (and operators) to ones with variable coefficients. We begin with the definition.

\begin{defn}\label{defn:fibereg} A subequation $\cF \subset \cJ^2(X)$ is {\em fiberegular} if the fiber map $\Theta$ of $\cF$ is {\em (Hausdorff) continuous}; that is, if the set-valued map
	$$
	\Theta: X \to \scK(\cJ^2) \ \ \text{defined by} \ \  \Theta(x):= \cF_x,\ \ x \in X
	$$
	is continuous when the closed subsets $\scK(\cJ^2)$ of $\cJ^2$ are equipped with the {\em Hausdorff metric}
	$$
	d_{\scH}(\Phi, \Psi) := {\rm max} \left\{ \sup_{J \in \Phi} \inf_{J' \in \Psi} \trinorm{J - J'} , \sup_{J' \in \Psi} \inf_{J \in \Phi} \trinorm{J - J'} \right\}
	$$
	where 
	$$ 
	\trinorm{J}  = \trinorm{(r,p,A)} := \max \left\{ |r|, |p|, \max_{1 \leq k \leq n} |\lambda_k(A)| \right\}
	$$ 
	is taken as the norm on $\cJ^2$ where $\lambda_1(A) \leq \cdots \lambda_n(A)$ are the (ordered) eigenvalues of $A \in \cS(n)$. We will also make use of some standard facts concerning the Hausdorff distance in the proof of Proposition \ref{unifcontequiv} below;   these facts will be recalled in Appendix \ref{AppC} for the reader's convenience.
\end{defn}
 
This notion was first introduced in \cite{CP17} in the special case $\cF \subset X \times \cS(n)$. We will also refer to $\Theta$ as a {\em continuous proper ellipitc map} since it takes values in the closed (non-empty and proper) subsets of $\cJ^2$ satisfying properties (P) and (N). 

Note that by the Heine--Cantor Theorem, fiberegularty is equivalent to the local uniform continuity of the fiber map $\Theta$. Moreover, if $\cF$ is $\cM$-monotone for some (constant coefficient) monotonicity cone subequation, fiberegularity has more useful equivalent formulations. 

\begin{prop}[Fiberegularity of $\cM$-monotone subequations] \label{unifcontequiv}
	Let $\cF$ be an $\cM$-monotone subequation on $X$ with fiber map $\Theta:X \to \scK(\call J^2)$. Then the following are equivalent:
	\begin{enumerate}[label=(\alph*)]
		\item $\Theta$ is locally uniformly continuous, that is for each $\Omega\ssubset X$ and every $\eta > 0$ there exists $\delta = \delta(\eta, \Omega) > 0$ such that
		\[
		x,y\in \Omega,\ |x-y|<\delta  \implies  d_{\scr H}(\Theta(x), \Theta(y)) < \eta;
		\]
		\item $\Theta$ is locally uniformly upper semicontinuous (in the sense of multivalued maps), that is for each $\Omega\ssubset X$ and every $\eta > 0$ there exists $\delta = \delta(\eta, \Omega) > 0$ such that 
		\[
		\Theta(B_\delta(x)) \subset N_\eta(\Theta(x)) \quad \forall x \in \Omega,
		\]
		where $N_\epsilon(S) \defeq \{ J \in \call J^2 :\  \inf_{J'\in S}\trinorm{J-J'} < \epsilon \}$ is the $\epsilon$-enlargement of the set $S$;
		\item there exists $J_0 \in \intr \call M$ such that for each fixed $\Omega\ssubset X$ and $\eta > 0$ there exists $\delta = \delta(\eta, \Omega) > 0$ such that
		\begin{equation}\label{FR_M}
		x,y\in \Omega,\ |x-y|<\delta  \implies  \Theta(x) + {\eta J_0} \subset \Theta(y).
		\end{equation}
		Moreover, the validity of this property for one fixed $J_0 \in \intr \call M$ implies the validity of the property for each $J_0 \in \intr \call M$.
	\end{enumerate}
\end{prop}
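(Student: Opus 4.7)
\medskip

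\noindent\textbf{Proof plan.} The plan is to establish the cycle of implications (a) $\Rightarrow$ (b) $\Rightarrow$ (c) $\Rightarrow$ (a), together with the independence of (c) on the choice of $J_0 \in \intr \cM$, using only the definition of the Hausdorff distance, the $\cM$-monotonicity $\cF_x + \cM \subset \cF_x$, and the fact that $\cM$ is a convex cone with nonempty interior. The only place where the structural hypothesis on $\cM$ really enters is in (b) $\Rightarrow$ (c); the remaining implications are of Hausdorff-metric bookkeeping type.

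\smallskip

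\noindent\emph{(a) $\Rightarrow$ (b).} This is immediate: if $d_{\scH}(\Theta(x), \Theta(y)) < \eta$, unpacking the definition yields in particular $\sup_{J' \in \Theta(y)} \inf_{J \in \Theta(x)} \trinorm{J-J'} < \eta$, i.e.\ $\Theta(y) \subset N_\eta(\Theta(x))$. Applying this for every $y \in B_\delta(x)$ with the $\delta$ provided by (a) yields (b).

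\smallskip

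\noindent\emph{(b) $\Rightarrow$ (c).} Fix $J_0 \in \intr \cM$ and choose $\rho > 0$ with $B_\rho(J_0) \subset \cM$ (with respect to $\trinorm{\cdot}$). Since $\cM$ is a cone, for any $\eta > 0$ we have $B_{\eta\rho}(\eta J_0) = \eta B_\rho(J_0) \subset \cM$. Given $\Omega \ssubset X$, apply (b) to $\Omega$ with threshold $\eta' := \eta\rho$, obtaining $\delta > 0$ such that $|x-y|<\delta$ in $\Omega$ implies $\Theta(x) \subset N_{\eta\rho}(\Theta(y))$ (after swapping the roles of $x$ and $y$, which is legitimate since the estimate in (b) is two-sided on a slightly larger domain). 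Then for any $J \in \Theta(x)$ pick $J' \in \Theta(y)$ with $\trinorm{J-J'} < \eta\rho$; the vector $\eta J_0 + (J-J')$ lies in $B_{\eta\rho}(\eta J_0) \subset \cM$, so
\[
J + \eta J_0 \;=\; J' + \bigl[(J - J') + \eta J_0\bigr] \;\in\; \Theta(y) + \cM \;\subset\; \Theta(y),
\]
which is \eqref{FR_M}. This is the one real use of the cone/interior structure and is the main conceptual step.

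\smallskip

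\noindent\emph{(c) $\Rightarrow$ (a), and independence of the choice of $J_0$.} For (c) $\Rightarrow$ (a), given $\eta > 0$ choose $\eta'$ with $\eta' \trinorm{J_0} < \eta$, apply (c) with $\eta'$, and observe that for each $J \in \Theta(x)$ the point $J + \eta' J_0 \in \Theta(y)$ satisfies $\trinorm{(J+\eta' J_0)-J} = \eta'\trinorm{J_0} < \eta$; the symmetric role of $x$ and $y$ gives $d_{\scH}(\Theta(x),\Theta(y)) \leq \eta'\trinorm{J_0} < \eta$, proving (a). For the independence statement, suppose (c) holds for some $J_0 \in \intr \cM$ and let $J_0' \in \intr \cM$ be arbitrary, with $B_{\rho'}(J_0') \subset \cM$. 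For $t \in (0, \rho'/\trinorm{J_0})$ we have $J_0' - t J_0 \in B_{\rho'}(J_0') \subset \cM$. Given $\eta > 0$, apply the assumed (c) with $\eta' := t\eta$ to obtain $\delta > 0$; then for $|x-y|<\delta$,
\[
\Theta(x) + \eta J_0' \;=\; \Theta(x) + \eta' J_0 + \eta(J_0' - t J_0) \;\subset\; \Theta(y) + \cM \;\subset\; \Theta(y),
\]
yielding (c) for $J_0'$. The main obstacle, if any, is ensuring the correct quantification in the (b) $\Rightarrow$ (c) step, namely that $\eta$ scales in both the inclusion $B_{\eta\rho}(\eta J_0) \subset \cM$ and the bound obtained from (b); the cone property of $\cM$ is exactly what makes this scaling compatible.
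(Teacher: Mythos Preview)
Your proof is correct and uses the same core ideas as the paper: the cone property of $\cM$ to scale neighborhoods of $J_0$, and $\cM$-monotonicity to absorb the difference $J-J'$ into the fiber. The only structural difference is the direction of the cycle: the paper runs (a) $\Rightarrow$ (c) $\Rightarrow$ (b) $\Rightarrow$ (a), while you run (a) $\Rightarrow$ (b) $\Rightarrow$ (c) $\Rightarrow$ (a) and then establish the independence of $J_0$ by a separate argument. In the paper's ordering, the independence comes for free because (a) $\Rightarrow$ (c) is shown for \emph{every} $J_0 \in \intr\cM$ at once. Your separate treatment via $J_0' - tJ_0 \in \cM$ is equally valid and arguably more transparent about what is actually needed. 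One small remark: in your (b) $\Rightarrow$ (c) step, the parenthetical about a ``slightly larger domain'' is unnecessary, since for $x,y \in \Omega$ with $|x-y|<\delta$ you can apply (b) centered at $y \in \Omega$ directly to get $\Theta(x) \subset N_{\eta\rho}(\Theta(y))$.
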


Formulation (c) is the most useful definition of fiberegularity for $\cM$-monotone subequations. In the pure second order and gradient-free cases there is a ``canonical'' reduced jet $J_0 = I \in \cS(n)$ and $J_0 = (-1,I) \in \R \times \cS(n)$, respectively.

\begin{proof}[Proof of Proposition \ref{unifcontequiv}]
		What follows is an adaptation of the proofs of \cite[Propositions 4.2 and 4.4]{CP17}.
	
		\medskip
		\noindent \underline{{(a)} implies {(c)} for every $J_0 \in \intr\call M$.} \ By definition \eqref{defhaus} we have, for $\call I, \call K \subset \call J^2$,
		\begin{equation} \label{hausdistform}
		d_{\scr H}(\call I, \call K) = \max\left\{\sup_{J\in \call I}\inf_{J' \in \call K} \trinorm{J-J'},\ \sup_{J'\in \call K}\inf_{J \in \call I} \trinorm{J-J'} \right\}.
		\end{equation}
		Fix now $J_0 \in \intr{\call M}$ and $\eta > 0$; if $\Theta$ is uniformly continuous on $\Omega$, then, for $\eta' > 0$ to be determined, there exists $\delta = \delta(\eta', \Omega)$ such that 
		\[
		x,y \in \Omega,\ |x-y| < \delta \implies \inf_{J' \in \Theta(y)} \trinorm{J-J'} < \eta' \quad \forall J \in \Theta(x).
		\]
		Hence for $x, y \in \Omega$ with $|x-y| < \delta$ one has
		\[
		\forall J \in \Theta(x) \ \; \exists J' \in \Theta(y) \ \text{such that $K \defeq J-J'$ satisfies $\trinorm{K} < \eta'$};
		\]
		that is
		\begin{equation} \label{5d4.19}
		\forall J \in \Theta(x) :\ \text{ $J=J'+K$ with $J'\in \Theta(y)$ and $\trinorm{K}< \eta'$}.
		\end{equation}
		We want to show that for each $J \in \Theta(x)$, one has
		\begin{equation} \label{5d4.20}
		J + \eta J_0 \in \Theta(y).
		\end{equation}
		Using the decomposition (\ref{5d4.19}),
		\[
		J + \eta J_0 = J' + (K + \eta J_0) \quad \text{where $J' \in \Theta(y)$}
		\]
		so that, by the $\call M$-monotonicity of $\Theta(y)$, one has (\ref{5d4.20}) provided that 
		\begin{equation} \label{5d4.21}
		K + \eta J_0 \in \call M.
		\end{equation}
		Since $J_0 \in \intr\call M$ and $\call M$ is a cone, there exists $\rho = \rho(\eta,J_0) > 0$ such that $\call B_\rho(J_0) \subset \call M$, where we denoted by $\call B$ the ball in $\call J^2$. Therefore (\ref{5d4.21}) holds for $\eta' < \rho$.
		
		\medskip
		\noindent\underline{{(c)} for any fixed $J_0 \in \intr \call M$ implies {(b)}.} \ Fix $\eta > 0$ and choose any $J_0 \in \intr{\call M}$; let $\delta = \delta(\eta', \Omega, J_0)$ as in \emph{(c)}, with $\eta'<\eta/\trinorm{J_0}$. For each $x \in \Omega$ and $y \in B_\delta(x) \subset \Omega$ we have
		\[
		\Theta(y) + \eta' J_0 \subset \Theta(x),
		\]
		hence 
		\[
		\Theta(y) \subset \Theta(x) - \eta' J_0 \subset N_\eta(\Theta(x)). 
		\]
		
		\medskip
		\noindent\underline{{(b)} implies {(a)}} \  This is a standard proof which does not require any monotonicity assumption. For $\eta > 0$ fixed, let $\delta = \delta(\eta', \Omega)$ be as in {\it(b)}, with $\eta'< \eta$. For $x,y \in \Omega$ such that $|x-y| < \delta$ one has
		\begin{equation} \label{symmetricxy}
		\Theta(x) \subset N_{\eta'}(\Theta(y)) \quad \text{and} \quad \Theta(y) \subset N_{\eta'}(\Theta(x)),
		\end{equation}
		hence, thanks to the first inclusion, for every $J \in \Theta(x)$ there exists $J' \in \Theta(y)$ such that $J = J' + K$ for some $K$ with $\trinorm{K} < \eta'$. Therefore
		\[
		\inf_{J' \in \Theta(y)} \trinorm{J-J'} < \eta' \quad \forall J \in \Theta(x),\ \forall y \in B_\delta(x),
		\]
		which yields
		\[
		\sup_{J \in \Theta(x)}\inf_{J' \in \Theta(y)} \trinorm{J-J'} \leq \eta' \quad \text{whenever $|x-y| < \delta$}.
		\]
		By the second inclusion in (\ref{symmetricxy}), one also has
		\[
		\sup_{J' \in \Theta(y)}\inf_{J \in \Theta(x)} \trinorm{J-J'} \leq \eta' \quad \text{whenever $|x-y| < \delta$},
		\]
		and thus by (\ref{hausdistform})
		\[
		d_{\scr H}(\Theta(x), \Theta(y)) \leq \eta' < \eta. \qedhere
		\]
	\end{proof}

Fiberegularity is crucial since it implies the {\bf {\em uniform translation property}} for subharmonics. This property is the content of the following result, which roughly speaking states that: {\em if $u \in \cF(\Omega)$, then there are small $C^2$ strictly $\cF$-subharmonic perturbations of \underline{all small translates} of  $u$ which belong to  $\cF(\Omega_{\delta})$}, where  $\Omega_{\delta}:= \{ x \in \Omega: d(x, \partial \Omega) > \delta \}$.

\begin{thm}[Uniform translation property for subharmonics]\label{thm:UTP}
	Suppose that a subequation $\cF$ is fiberegular and $\cM$-monotone on $\Omega \ssubset \R^n$ for some monotonicity cone subequation $\cM$. Suppose that $\cM$ admits a strict approximator \footnote{The term strict approximator for $\psi$ refers to the fact that this function generates an approximation from above of the $\wt{\cM}$-subharmonic function which is identically zero. This is explained in the proof of Theorem 6.2 of \cite{CHLP22}.}; that is, there exists $\psi \in \USC(\overline{\Omega}) \cap C^2(\Omega)$ which is strictly  $\cM$-subharmonic   on  $\Omega$. Given $u \in \cF(\Omega)$, for each $\theta > 0 $ there exist $\eta = \eta(\psi, \theta) > 0$ and $\delta = \delta(\psi, \theta) > 0$ such that
	\begin{equation}\label{uythetadef}
	u_{y,\theta} = \tau_yu + \theta \psi \ \ \text{belongs to} \ \cF(\Omega_{\delta}), \ \ \forall \, y \in B_{\delta}(0),
	\end{equation}
	where $\tau_y u(\, \cdot \, ):= u(\, \cdot - y)$.
\end{thm}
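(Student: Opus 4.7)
The approach is to check $\cF$-subharmonicity of $u_{y,\theta}$ on $\Omega_\delta$ one upper test jet at a time, using the $\theta\psi$-perturbation to exploit fiberegularity. Given $(r,p,A) \in J^{2,+}_x u_{y,\theta}$ realized by a $C^2$ upper test function $\varphi$, the shifted function $\tilde\varphi(z) := \varphi(z+y) - \theta\psi(z+y)$ is a $C^2$ upper test function for $u$ at $z_0 := x-y$, and hence
\[
J := (r,p,A) - \theta\, J^2_x\psi \;\in\; J^{2,+}_{x-y} u \;\subset\; \cF_{x-y}.
\]
The task therefore reduces to finding $\delta > 0$ such that, for every such $J$ with $x \in \Omega_\delta$ and $|y| < \delta$, one has $J + \theta J^2_x\psi \in \cF_x$.

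Fix any $J_0 \in \intr\cM$ and a compact $K \ssubset \Omega$ large enough to contain $\overline{\Omega_\delta}$ together with its $\delta$-translates (for a small $\delta$ adjusted below). Since $\psi \in C^2(\Omega)$ is strictly $\cM$-subharmonic, the continuous map $x \mapsto J^2_x\psi$ sends $K$ into the open cone $\intr\cM$; compactness then produces a uniform constant $c = c(\psi, K, J_0) > 0$ with
\[
J^2_x\psi - c\, J_0 \in \cM \qquad \text{for every } x \in K.
\]
Setting $\eta := \theta c$, Proposition \ref{unifcontequiv}(c) yields $\delta > 0$ such that $\cF_{x'} + \eta J_0 \subset \cF_x$ whenever $x, x' \in K$ with $|x-x'| < \delta$. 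For $x \in \Omega_\delta$ and $|y| < \delta$ the decomposition
\[
(r,p,A) \;=\; (J + \eta J_0) \;+\; \theta\bigl(J^2_x\psi - c J_0\bigr)
\]
then places the first summand in $\cF_x$ (fiberegularity applied to $J \in \cF_{x-y}$) and the second in $\cM$ (choice of $c$ together with conicality), and $\cM$-monotonicity $\cF_x + \cM \subset \cF_x$ delivers $(r,p,A) \in \cF_x$, as required.

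The main obstacle I anticipate is purely bookkeeping: selecting $K$, $c$, and $\delta$ in a compatible order so that fiberegularity is invoked on the same compact set on which the uniform $c$ was extracted, and so that $x \in \Omega_\delta$ with $|y| < \delta$ entails $x, x-y \in K$. Conceptually the argument is transparent and isolates the role of each hypothesis: strict $\cM$-subharmonicity of $\psi$ supplies, uniformly in $x \in K$, an ``extra'' jet of precisely the form $\eta J_0$ plus an $\cM$-residue; fiberegularity converts that $\eta J_0$ into the required transport from fiber $\cF_{x-y}$ to $\cF_x$; and $\cM$-monotonicity absorbs the residue.
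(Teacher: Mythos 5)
Your argument is correct in substance, but it takes a genuinely different route from the paper. You verify the differential inclusion $J^{2,+}_x u_{y,\theta}\subset\cF_x$ directly, jet by jet: subtracting the $C^2$ function $\theta\psi$ and translating are exact operations on upper test functions, so each upper test jet of $u_{y,\theta}$ at $x\in\Omega_\delta$ produces $J=(r,p,A)-\theta J^2_x\psi\in J^{2,+}_{x-y}u\subset\cF_{x-y}$, and your decomposition $(r,p,A)=(J+\eta J_0)+\theta\bigl(J^2_x\psi-cJ_0\bigr)$ with $\eta=\theta c$ then uses fiberegularity (\Cref{unifcontequiv}(c)) for the first summand and $\cM$-monotonicity to absorb the second. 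The paper instead never touches the test jets of $u_{y,\theta}$: it works on the dual side, invoking the Definitional Comparison \Cref{defcompa} and testing against $C^2$ strictly $\wt{\cF}$-subharmonic functions $v$, which it translates and perturbs to $\hat v_{y;\theta}=\tau_{-y}v+\theta\tau_{-y}\psi$; the quantitative heart is the same (there, $J^2_xv+\eta J_0\in\intr\wt{\cF}_{x-y}$ via fiberegularity of the \emph{dual} fiber map, cf.\ \Cref{todual} and \Cref{deltathetarel}, together with $J^2_x\psi-\tfrac{\eta}{\theta}J_0\in\cM$ and the bound $\eta<\theta\rho_0/\trinorm{J_0}$, which is your constant $c$ in disguise). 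Your route is more elementary, since by \Cref{defn:Fsub} the jetwise inclusion \emph{is} the definition of $\cF$-subharmonicity, and it bypasses duality and the definitional comparison entirely.

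One caveat on the bookkeeping you flag: the device of a compact $K\ssubset\Omega$ containing $\overline{\Omega_\delta}$ together with all its $\delta$-translates cannot be arranged literally, because for $x\in\Omega_\delta$ and $|y|<\delta$ the points $x-y$ come arbitrarily close to $\partial\Omega$, so no such $K$ compactly contained in $\Omega$ exists. The resolution is the one the paper itself uses: extract the uniform margin for $\psi$ first and on the whole of $\Omega$ (the paper takes $\rho_0:=\inf_\Omega\rho>0$ with $\call B_{\rho(x)}(J^2_x\psi)\subset\cM$, reading the strict approximator as having this uniform strictness), so that $\eta=\theta c$ is fixed \emph{independently} of $\delta$; then take $\delta=\delta(\eta,\Omega)$ from the fiberegularity estimate on $\Omega$ (note only $x\in\Omega_\delta$, not $x-y$, is needed in the margin for $\psi$, while the fiberegularity estimate must be allowed to reach the point $x-y$ near $\partial\Omega$, exactly as in the paper's use of $\delta(\eta,\Omega)$). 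With that ordering your argument closes as written.
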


\begin{proof}
	We are going to use the Definitional Comparison \Cref{defcompa} in order to adapt the method used in the proofs of the pure second-order and the gradient-free counterparts of this uniform translation property (see~\cite[Proposition~3.7(5)]{CP17} and~\cite[Proposition~3.7(4)]{CP21}).
	
	Fix $J_0 \in \intr{\call M}$ and let $\delta = \delta(\eta, \Omega)$ be as in \Cref{unifcontequiv}{(c)}, with $\eta > 0$ to be determined. Consider $\Omega' \ssubset \Omega_\delta$ and $v \in C^2(\Omega') \cap \USC(\bar{\Omega'})$, strictly $\tildee{\call F}$-subharmonic on $\Omega'$. In order to prove the subharmonicity of $u_{y;\theta}$ (defined as in~(\ref{uythetadef})) via the definitional comparison (cf.~\Cref{appldefcompa}), it suffices to show that, for a suitable $\eta$,
	\begin{equation} \label{utp:defcompaimpl}
	\exists\, x_0 \in \Omega' :\ (u_{y;\theta} + v)(x_0) > 0 \quad \implies \quad \exists\, y_0 \in \de\Omega' :\ (u_{y;\theta} + v)(y_0) > 0.
	\end{equation}
	Fix $\theta > 0$ and for $y \in B_\delta$ consider the function
	\[
	\hat v_{y;\theta} \defeq \tau_{-y} v + \theta \tau_{-y}\psi,
	\]
	defined on $\Omega' + y$, which satisfies
	\begin{equation} \label{twolinej2}
	\begin{split}
 J^2_{x-y} \hat v_{y;\theta} &=  J^2_x v + \theta  J^2_x \psi =  J^2_x v + \eta J_0 + \theta\Big(  J^2_x \psi - \frac\eta\theta J_0 \Big) \qquad \forall x \in \Omega'.
	\end{split}
	\end{equation}
	By \Cref{unifcontequiv}{(c)},\footnote{It is easy to see that in the proof of {(a) $\implies$ (c)} one can choose $J' \in \intr\Theta(y)$. Then, if $J \in \intr\Theta(x)$,  one uses the elementary fact that $\intr{\call F}_x + \call M \subset \intr{\call F}_x$.}
	\[
	J^2_{x} v + \eta J_0 \in \intr{\tildee{\call F}}_{x-y} \qquad \forall x \in \Omega',
	\]
	therefore, by the $\call M$-monotonicity of $\tildee{\call F}$, and using~(\ref{twolinej2}),
	\begin{equation} \label{jxyinf}
	 J^2_{x - y} \hat v_{y;\theta} \in \intr{\tildee{\call F}}_{x-y} \qquad \forall x \in \Omega'
	\end{equation}
	provided that
	\begin{equation} \label{utp:inclinM}
	 J^2_x \psi - \frac\eta\theta J_0 \in \call M \qquad \forall x \in \Omega'.
	\end{equation}
	Since $\psi$ is a strict approximator for $\call M$ on $\bar\Omega$, we know that there exists $\rho(x) > 0$ such that $\call B_{\rho(x)}(J^2_x \psi) \subset \call M$; also, since $\psi \in C^2(\bar\Omega)$, we know that $\rho_0 \defeq \inf_{\Omega} \rho > 0$. Therefore it suffices to choose
	\begin{equation} \label{utp:uppboundeta}
	\eta < \frac{\theta\rho_0}{\trinorm{J_0}},
	\end{equation}
	in order for (\ref{utp:inclinM}) to hold for any $\Omega' \ssubset \Omega_\delta$. It is worth noting that the bound (\ref{utp:uppboundeta}) is independent of $\delta$, which does depend on $\eta$, and hence an $\eta$ satisfying~(\ref{utp:uppboundeta}) can be chosen.
	
	We have proved that $\hat v_{y;\theta} \in C^2(\Omega'-y) \cap \USC(\bar{\Omega' - y})$ is strictly $\tildee{\call F}$-subharmonic on $\Omega' - y \ssubset \Omega_\delta - y \subset \Omega$ for each $y \in B_\delta$, and we know that $u$ is $\call F$-subharmonic on $\Omega_\delta-y \subset \Omega$ by hypothesis. By our initial assumption, there exists $x_0 \in \Omega'$ such that
	\[
	(u + \hat v_{y;\theta})(x_0 - y) = (\tau_y u + \tau_y\hat v_{y;\theta})(x_0) = (u_{y;\theta} + v)(x_0) > 0 ;
	\]
	hence by the definitional comparison applied to $u$ and $\hat v_{y;\theta}$ on $\Omega'-y$, there exists $\tilde y_0 = y_0 - y \in \de (\Omega' - y) = \de\Omega' - y$ such that 
	\[
	0 < (u + \hat v_{y;\theta})(\tilde y_0) = (u_{y;\theta} + v)(y_0),
	\]
	thus proving implication (\ref{utp:defcompaimpl}).
\end{proof}

The uniform translation property of Theorem \ref{thm:UTP} will play a key role in the treatment of the variable coefficient setting, where one does not have translation invariance. In particular, it will be used to show that given a semicontinuous $\cF$-subharmonic function $u$ there are {\bf {\em quasi-convex approximations}} of $u$ which remain $\cF$-subharmonic provided that $\cF$ is fiberegular and $\cM$-monotone (see Theorem \ref{prop:approx2.0}).

\begin{remark}\label{rem:UTP}
Concerning the additional hypothesis that the monotonicity cone subequation $\cM$ admits a $C^2$ strict subharmonic, we note that in the pure second order and gradient-free cases ($\cF \subset \Omega \times \cS(n)$ and $\cF \subset \Omega \times (\R \times \cS(n))$, one always has a quadratic strict approximator $\psi$. Thus Theorem \ref{thm:UTP} holds for all continuous coefficient $\cF$ which are minimally monotone (with  $\cM = \cP \subset \cS(n)$ and $\cM = \cQ = \cN \times \cP \subset \R \times \cS(n)$ respectively). In the general $\cM$-monotone and fiberegular case this additional hypothesis will be essential in the proof of the so-called {\bf{\em Zero Maximum Principle}} (ZMP) of Theorem \ref{thm:zmp} 
 for the dual monotonicity cone $\wt{\cM}$. The (ZMP) is a key ingredient in the monotonicty-duality method for proving comparison, as wil be discussed below in Section \ref{sec:SAaC}. Moreover, the (constant coefficient) monotonicity cone subequations which admit strict approxiamtors are well understood by the study made in \cite{CHLP22} and will be recalled below in Theorem \ref{ffctcs} and the discussion which precedes the theorem.
\end{remark}

Fiberegularity of an $\cM$-monotone subequation has two additional consequences which are of use in treating existence by Perron's method. While we will not pursue existence here, we record the result for future use. A general property of uniformly continuous maps on some open subset $\Omega$ with boundary $\partial \Omega$ is the possibility to extend them to the boundary. One can prove that the $\cM$-monotonicity is preserved as well. Also, one can define in a natural way the \emph{dual fiber map} of $\Theta$ by
\[
\tildee\Theta(x) \defeq \tildee{\Theta(x)} \quad \forall x \in X;
\]
note that this is a pointwise (or fiberwise) definition, and that by a straightforward extension to variable coefficient subequations of the elementary properties of the Dirichlet dual collected in~\cite[Section~4]{HL09}, \cite[Section~3]{HL11} or~\cite[Proposition~3.2]{CHLP22}, it is clear that $\tildee\Theta$ is still $\call M$-monotone. Furthermore, it is uniformly Hausdorff-continuous if $\Theta$ is.

The following proposition, which extends \cite[Proposition 3.6]{CP21}, collects these two properties.

\begin{prop}[Extension and duality] \label{unifcontext} \label{todual}
	Let $\Theta$ be a uniformly continuous $\call M$-monotone map on $\Omega$. Then 
	\begin{enumerate}[label=(\alph*)]
		\item $\Theta$ extends to a uniformly continuous $\call M$-monotone map on $\bar\Omega$;
		\item $\tildee\Theta$ is uniformly continuous and $\call M$-monotone on $\Omega$.
	\end{enumerate}
\end{prop}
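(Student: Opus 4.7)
The plan is to handle the two parts by different means. Part (a) is a standard extension-by-continuity argument for uniformly continuous maps into a complete metric space, while part (b) follows cleanly from the equivalent characterization of fiberegularity in Proposition~\ref{unifcontequiv}(c) together with the algebra of the Dirichlet dual.

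For part (a), I would first invoke the completeness of $\scK(\cJ^2)$ under the Hausdorff distance $d_{\scH}$ (with possibly infinite values). Given $x \in \de\Omega$ and $x_n \to x$ in $\Omega$, the uniform continuity of $\Theta$ forces $\{\Theta(x_n)\}$ to be $d_{\scH}$-Cauchy, and its limit depends only on $x$, thus defining $\bar\Theta(x)$. The extended map $\bar\Theta : \bar\Omega \to \scK(\cJ^2)$ inherits the modulus of uniform continuity of $\Theta$. To verify that $\cM$-monotonicity persists on $\de\Omega$, I would check that the family $\{S \in \scK(\cJ^2) : S + \cM \subset S\}$ is closed under $d_{\scH}$-convergence: if $S_n \to S$ and each $S_n + \cM \subset S_n$, then for every $J \in S$ one picks $J_n \in S_n$ with $J_n \to J$, so $J_n + K \in S_n$ for each $K \in \cM$; the other half of Hausdorff convergence then gives $\inf_{J'\in S}\trinorm{J_n + K - J'} \to 0$, forcing $J + K \in S$ by closedness.

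For part (b), the $\cM$-monotonicity of $\tildee\Theta$ is immediate from the duality–monotonicity transfer rule \eqref{dual_sums}. For uniform continuity, I would use the equivalence (a)$\Leftrightarrow$(c) in Proposition~\ref{unifcontequiv}, whose proof only exploits $\cM$-monotonicity of the set-valued map. Fix $J_0 \in \intr \cM$; by (c) applied to $\Theta$, for every $\Omega' \ssubset \Omega$ and $\eta > 0$ there is $\delta > 0$ such that
\[
\Theta(x) + \eta J_0 \subset \Theta(y) \qquad \text{whenever } x,y \in \Omega' \text{ with } |x-y|<\delta.
\]
The elementary translate–dual identity
\[
\widetilde{S + J_0} = \tildee{S} - J_0,
\]
which follows at once from $(\intr S + J_0)^c = (\intr S)^c + J_0$ and the definition $\tildee{S} = -(\intr S)^c$, combined with the inclusion-reversing rule \eqref{dual_inclusion}, yields
\[
\tildee\Theta(y) \subset \widetilde{\Theta(x) + \eta J_0} = \tildee\Theta(x) - \eta J_0,
\]
i.e.\ $\tildee\Theta(y) + \eta J_0 \subset \tildee\Theta(x)$. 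Swapping the roles of $x$ and $y$, this is exactly condition (c) for $\tildee\Theta$, and Proposition~\ref{unifcontequiv} then yields the desired uniform continuity of $\tildee\Theta$.

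The one point to handle with care is the closedness of the family of $\cM$-monotone closed sets under Hausdorff convergence in part (a); once this is in place the extension is automatic from general principles, and the argument for part (b) is essentially a bookkeeping exercise via the translate–dual identity and the one-sided inclusion formulation of fiberegularity, with no need to revisit two-sided Hausdorff estimates by hand.
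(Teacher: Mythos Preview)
Your proposal is correct and follows essentially the same approach as the paper: part (a) is the standard extension-by-completeness argument in $(\scK(\cJ^2), d_{\scH})$ with $\cM$-monotonicity preserved under Hausdorff limits, and part (b) is exactly the paper's argument via Proposition~\ref{unifcontequiv}(c) combined with the translate--dual identity and the inclusion-reversing property of the dual. The only minor addition in the paper's proof of (a) is an explicit check that the limiting fibers are non-empty and proper (using $d_{\scH}(\cdot,\emptyset)=+\infty$ and Lemma~\ref{lem:hausinfmon}), which you do not mention but which is either automatic from these infinite-distance facts or not formally required by Definition~\ref{defn:MMM}.
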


\begin{proof}

	\underline{\it(a)} \ We essentially reproduce the proof of \cite[Proposition 3.5]{CP17}. One extends $\Theta$ to $\bar x \in \de \Omega$ as a limit
	\begin{equation} \label{limitucext}
	\Theta(\bar x) = \lim_{k\to \infty} \Theta(x_k)
	\end{equation}
	where $\{x_k\}_{k\in \N} \subset \Omega$ is a sequence such that $\lim_{k\to\infty} x_k = \bar x$ and the limit in (\ref{limitucext}) is to be understood in the complete metric space $(\frk K(\call J^2), d_{\scr H})$. This limit exists since $\{x_k\}$ is Cauchy sequence and hence so is $\{\Theta(x_k)\}$ by the uniform continuity of $\Theta$. Moreover, this limit is independent of the choice of $\{x_k\}$, and we have the extension of $\Theta$ to $\de\Omega$ by performing this construction for each $\bar x \in \de\Omega$. The resulting extension is uniformly continuous and each $\Theta(\bar x)$ is closed by construction.
	It remains to show that the extension takes values in the set of $\call M$-monotone sets.
	First of all, each $\Theta(\bar x)$ is non-empty because $d_{\scr H}(\Theta(x), \emptyset) = +\infty$ for all $x \in \Omega$ (by property \Cref{hausinf}) and hence $\Theta(x_k) \not\to \emptyset$.
	As for the $\cM$-monotonicity of the limit set $\Theta(\bar x)$, note that by (\ref{hausdistform}) it is easy to show that $\Theta(\bar x)$ is the set of limits of all converging sequences $\{J_k\}$ in $\call J^2$ such that $J_k \in \Theta(x_k)$  for all $k$ (cf.\ \cite[Exercise 7.3.4.1]{BBI01}), hence given $J \in \Theta(\bar x)$ we have
	\[
	J = \lim_{k \to \infty} J_k  \quad \text{with $J_k\in \Theta(x_k)$} 
	\]
	and thus for each $\hat J \in \call M$
	\[
	J + \hat J = \lim_{k \to \infty} \big( J_k + \hat J \big) \quad \text{with $J_k + \hat J \in \Theta(x_k)$} 
	\]
	by the $\call M$-monotonicity of each $\Theta(x_k)$; hence $J + \hat J \in \Theta(\bar x)$ for each $J \in \Theta(\bar x)$ and each $\hat J \in \call M$.
	Finally, to prove that $\Theta(\bar x)$ is a proper subset of $\call J^2$, it suffices to invoke \Cref{lem:hausinfmon}; indeed, arguing as above, this guarantees that $\Theta(x_k) \not\to \call J^2$.
	
	\medskip
	\noindent\underline{\it(b)} \ We proceed as in the proof of \cite[Proposition 3.5]{CP17}. As we already noted, by the elementary properties of the Dirichlet dual (namely~\cite[Proposition~3.2, properties (2) and (6)]{CHLP22}), one knows that $\Theta$ is $\call M$-monotone if and only if $\tildee\Theta$ is (note that this is a fiberwise property); then we only need to show that $\tildee\Theta$ is uniformly continuous on $\Omega$. Since $\Theta$ is uniformly continuous on $\Omega$, by \Cref{unifcontequiv}{(c)}, for $\eta> 0$, $J_0 \in \intr{\call M}$, and a suitable $\delta = \delta(\eta, \Omega, J_0)$ one has
	\[
	\Theta(x) + {\eta J_0} \subset \Theta(y)
	\]
	whenever $x,y\in \Omega$ are such that $|x-y|<\delta$.
	Hence, by the above-mentioned elementary properties of the Dirichlet dual, one obtains
	\[
	\tildee\Theta(y)  \subset \tildee\Theta(x) - \eta J_0;
	\]
	that is,
	\[
	\tildee\Theta(y) + {\eta J_0} \subset \tildee\Theta(x),
	\]
	thus proving the uniform continuity of $\tildee\Theta$ by exploiting the equivalent formulation of \Cref{unifcontequiv}{(c)} again. 
\end{proof}

\begin{remark} \label{deltathetarel}
	It is worth noting that this proof shows that the relation between $\eta$ and $\delta$ is the same for both $\Theta$ and $\tildee\Theta$. 
\end{remark}

\section{Quasi-convex approximation and the Subharmonic Addition Theorem}\label{sec:QCA}

In this section, we present a final ingredient for the duality-monotonicity method for potential theoretic comparison; namely, the so-called {\bf {\em Subharmonic Addition Theorem}}. Roughly, it states that if one has a {\em jet addition formula}
\begin{equation}\label{JAF}
	\cG_x + \cF_x \subset \cH_x, \ \ \forall \, x \in X
\end{equation}
for subequations $\cG, \cF$ and $\cH$ in $\cJ^2(X)$, then one has a {\em subharmonic addition relation}
\begin{equation}\label{SAR}
\cG(X) + \cF(X) \subset \cH(X)
\end{equation}
for the associated spaces of subharmonics. This implication will take on considerable importance when combined with the fundamental monotonicity-duality formula of jet addition noted in \eqref{mono_dual} \begin{equation}\label{jet_add}
\cF_x + \cM_x \subset \cF_x \ \ \Longrightarrow \ \ \cF_x + \wt{\cF}_x \subset\wt{\cM}_x , \ \ \text{for each} \ x \in X. 
\end{equation}

Many results about $\cF$-subharmonic functions $u$, including the implication \eqref{JAF} $\Rightarrow$ \eqref{SAR}, are more easily proved 
if one assumes that $u$ is also locally quasi-convex. Then, one can make use of {\em quasi-convex approximation} by way of {\em sup-convolutions} to extend the result to semicontinuous $u$. In general, when the subequations have variable coefficients, the quasi-convex approximation will be $C^2$-perturbation (with small norm) of the sup-convolution. The quasi-convex approximation and subharmonic addition theorems in this section were essentially given in \cite{R20}, and extend those known for for fiberegular subequations independent of the gradient ~\cite{CP17, CP21}. 

\subsection{Quasi-convex approximations}\label{sec:PQCA}

We begin by recalling some basic notions. 

\begin{defn}
	A function $u: C \to \R$ is {\em $\lambda$-quasi-convex} on a convex set  $C \subset \R^n$ if there exists $\lambda \in \R_{+}$ such that $u + \frac{\lambda}{2} | \cdot |^2$ is convex on $C$. A function $u: X \to \R$ is {\em locally quasi-convex} on an open set  $X \subset \R^n$ if for every $x \in X$, $u$ is $\lambda$-quasi-convex on some ball about $x$ for some $\lambda \in \R_{+}$.  
\end{defn}
Such functions are twice differentiable for almost every\footnote{The relevant measure is Lebesgue measure on $\R^n$.}  $x \in X$ by a very easy generalization of Alexandroff's theorem for convex functions (the addition of a smooth function has no effect on differentiability). This is one of the many properties that quasi-convex functions inherit from convex functions. See \cite{PR22} for an extensive treatment of quasi-convex functions. Quasi-convex functions are used to approximate $u \in \USC(X)$ (bounded from above) by way of the {\em sup-convolution}, which for each $\veps > 0$ is defined by
\begin{equation}\label{sup_conv}
u^{\veps}(x) := \sup_{y\in X} \left( u(y) -\frac{1}{2 \veps} |y - x|^2 \right), \ \ x \in X.
\end{equation}
One has that $u^{\veps}$ is $\frac{1}{\veps}$-quasi-convex and decreases pointwise to $u$ as $\veps \searrow 0$. 

Now, making use of the the uniform translation property of Theorem \ref{thm:UTP}, we will prove the quasi-convex approximation result which is needed for the proof of the Subharmonic Addition Theorem in the case of fiberegular $\cM$-monotone subequations. This approximation result substitutes the constant coefficient result of~\cite[Theorem~8.2]{HL09}.  

\begin{thm}[Quasi-convex approximation] \label{prop:approx2.0}
	Suppose that a subequation $\cF$ is fiberegular and $\cM$-monotone on $\Omega \ssubset \R^n$ for some monotonicity cone subequation $\cM$ and suppose that $\cM$ admits a strict approximator $\psi$. Suppose that $u \in \call F(\Omega)$ is bounded, with $|u|\leq M$ on $\Omega$. For every $\theta > 0$, let $\eta, \delta > 0$ be as in \eqref{uythetadef}.  Then there exists $\epsilon_* = \epsilon_*(\delta, M) > 0$ such that 
	\begin{equation} \label{scpert}
	u^\epsilon_\theta \defeq u^\epsilon + \theta \psi \in \call F(\Omega_\delta) \ \ \forall \, \epsilon \in (0, \epsilon_*), 
	\end{equation}
	where $\Omega_\delta \defeq \{ x \in \Omega:\ d(x, \de\Omega) > \delta \}$ and $u^{\veps}$ is the sup-convolution \eqref{sup_conv} of $u$.
\end{thm}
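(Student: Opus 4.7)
My plan is to reduce the theorem to the Uniform Translation Property of Theorem~\ref{thm:UTP} by expressing the sup-convolution as an upper envelope of translates of $u$, where the translations are controlled in size by the boundedness of $u$.

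First I would perform the change of variable $w = x - y$ in the definition of $u^{\veps}$ to rewrite it in the ``translation form''
\[
u^{\veps}(x) = \sup_{w \in \R^n} \left[ \tau_w u(x) - \frac{|w|^2}{2\veps} \right],
\]
with the convention $\tau_w u(x) = -\infty$ when $x - w \notin \Omega$. The next step is the classical localization argument: using $|u| \leq M$, if $|w| \geq 2\sqrt{M\veps}$ then $\tau_w u(x) - \frac{|w|^2}{2\veps} \leq M - 2M = -M \leq u(x) = \tau_0 u(x)$, so such $w$ cannot realize the supremum. Choosing
\[
\veps_* = \veps_*(\delta, M) \defeq \frac{\delta^2}{4M},
\]
we obtain, for each $x \in \Omega_\delta$ and each $\veps \in (0, \veps_*)$, the restricted representation
\[
u^{\veps}(x) = \sup_{|w| < \delta} \left[ \tau_w u(x) - \frac{|w|^2}{2\veps} \right],
\]
and adding $\theta \psi(x)$ gives
\[
u^{\veps}_\theta(x) = \sup_{|w| < \delta} \left[ (\tau_w u + \theta \psi)(x) - \frac{|w|^2}{2\veps} \right], \qquad x \in \Omega_\delta.
\]

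Now for each $w \in B_\delta(0)$, the Uniform Translation Property (Theorem~\ref{thm:UTP}) guarantees that $u_{w,\theta} = \tau_w u + \theta \psi$ is $\cF$-subharmonic on $\Omega_\delta$; subtracting the nonnegative constant $\frac{|w|^2}{2\veps}$ preserves this by the negativity property (N) of the subequation $\cF$. Thus $u^{\veps}_\theta$ is expressed on $\Omega_\delta$ as the pointwise supremum of a family of $\cF$-subharmonic functions, which is locally uniformly bounded above (since $u$ is bounded and $\psi \in C^2(\Omega)$ is locally bounded on $\Omega_\delta$). The standard ``families locally bounded above'' result for $\cF$-subharmonics (recalled in Appendix~\ref{AppB}) then says that the upper semicontinuous envelope of this supremum is $\cF$-subharmonic on $\Omega_\delta$. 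Since the sup-convolution $u^{\veps}$ is already locally Lipschitz (hence continuous) and $\psi$ is continuous, $u^{\veps}_\theta$ is upper semicontinuous and coincides with its own usc envelope, yielding $u^{\veps}_\theta \in \cF(\Omega_\delta)$ as claimed.

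The main obstacle is ensuring the correct bookkeeping between the two scales $\delta$ and $\veps$: the parameter $\delta$ is fixed by the Uniform Translation Property as a function of $\theta$ (via $\eta$), and then $\veps_*$ must be chosen small enough so that the localization radius $2\sqrt{M\veps}$ for the sup-convolution fits inside $B_\delta(0)$. This is precisely what the definition $\veps_* = \delta^2/(4M)$ accomplishes, and it also makes the dependence $\veps_* = \veps_*(\delta,M)$ claimed in the statement transparent. A minor technical point to be checked is that the ``families locally bounded above'' theorem applies in the variable-coefficient setting used here, but this is straightforward since the result is purely local and only requires property (P) together with the relevant test-jet manipulations, both of which hold for our subequation $\cF$.
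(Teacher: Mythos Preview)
Your proof is correct and follows essentially the same approach as the paper's: both arguments localize the sup-convolution to translates by $|w|<\delta$ via the bound $|u|\leq M$ (with $\veps_* = \delta^2/(4M)$), invoke the Uniform Translation Property (Theorem~\ref{thm:UTP}) to ensure each $\tau_w u + \theta\psi$ lies in $\cF(\Omega_\delta)$, apply the sliding property to subtract $\frac{|w|^2}{2\veps}$, and conclude via the families-locally-bounded-above property together with the upper semicontinuity of $u^\veps$. The only cosmetic difference is that the paper adds $\theta\psi$ after taking the supremum while you fold it inside, which is immaterial since $\psi$ does not depend on $w$.
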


\begin{remark}
	The approximating function $u^\epsilon_\theta$ is quasi-convex, since it is the sum of a quasi-convex term, namely $u^\epsilon$, and a smooth term, namely $\theta \psi$, with Hessian bounded from below.
\end{remark}

\begin{proof}[Proof of Theorem \ref{prop:approx2.0}]
	By the uniform translation property (Theorem \ref{thm:UTP}), we know that 
	\[
	\scr F \defeq \big\{ u_{z;\theta}:\ |z| < \delta \big\} \subset \call F(\Omega_\delta).
	\]
	By the \emph{sliding property} (see~\Cref{elemprop}\textit{(iv)}) we also have
	\[
	\scr F_\epsilon \defeq \big\{ u_{z;\theta} - \tfrac1{2\epsilon}|z|^2:\ |z| < \delta \big\} \subset \call F(\Omega_\delta),
	\]
	and this family is locally bounded above. Therefore, by the \emph{families-locally-bounded-above property} of (see~\Cref{elemprop}\textit{(vii)}), the upper semicontinuous envelope $v_\epsilon^*$ of its upper envelope $v_\epsilon \defeq \sup_{w \in \scr F_\epsilon} w$ belongs to $\call F(\Omega_\delta)$. Now, a basic property of the sup-convolution is that it can also be represented as (for example, see~\cite[Section~8]{HL09}):
	\begin{equation} \label{supconvball}
	u^\epsilon = \sup_{z \in B_\delta} \Big( u(\cdot - z) - \frac1{2\epsilon}{|z|^2} \Big), \qquad \delta = 2\sqrt{\epsilon M}.
	\end{equation}
Hence, using the bound $|u| \leq M$, by choosing
	\begin{equation} \label{epsleqd2/4M}
	\epsilon \leq \frac{\delta^2}{4M},
	\end{equation}
	one has
	\[
	\sup_{w \in \scr G_\epsilon} w = u^\epsilon, \qquad \scr G_\epsilon \defeq \big\{u(\,\cdot - z)- \tfrac1{2\epsilon}|z|^2:\ |z|<\delta\big\},
	\]
	and thus $u_\epsilon^* \defeq (\sup_{w \in \scr G_\epsilon} w)^* = u^\epsilon$ since $u^\epsilon$ is upper semicontinuous. The desired conclusion now follows by noting that $v^*_\epsilon = u^*_\epsilon +  \theta \psi$. 
\end{proof}

\subsection{Subharmonic addition for fiberegular $\cM$-monotone subequations}

We will now make use of the quasi-convex approximation result of Theorem \ref{prop:approx2.0} to prove subharmonic addition.  Given the local nature of the definition of subharmonicity, we are going to use the following local argument: in order to prove that $\call F(X) + \call G(X) \subset \call H(X)$ it suffices to prove that $\call F(B) + \call G(B) \subset \call H(B)$ for one small open ball $B$ about each point of $X$. Therefore we will be in the situation where $\Omega = B \subset X$ can be chosen in such a way that $\call M$ indeed admits a strict approximator on $\bar\Omega$: for every $x \in X$, it suffices to consider a quadratic strict $\call M$-subharmonic on $B_r(x)$, for some $r>0$, (which we know there exists thanks to the topological property (T)) and then set $B = B_{r/2}(x)$.

In order to better understand the role of the assumptions of fiberegularity and $\cM$-monotonicity on the subequations, perhaps it is useful to review the argument in the constant coefficient case, as given in~\cite[Theorem~7.1]{CHLP22}. Suppose that $u \in \call F(X)$ and $v \in \call G(X)$ for a pair of subequations $\call F$ and $\call G$ and suppose that there exists a third subequation $\call H$ with $\call F + \call G \subseteq \call H$. As noted above, since the definition of $\call H$-subharmonic is local, in order to show that $u+v \in \call H(X)$ it is enough to show that  $u+v \in \call H(U_x)$ for some open neighborhood $U_x$ of each $x \in X$. At this point, it is known~\cite[Remark~2.13]{CHLP22} that if one chooses the $U_x$'s to be small enough, then property (T) ensures the existence of smooth (actually, quadratic) subharmonics $\phi_x \in \call F(U_x)$ and $\psi_x \in \call G(U_x)$. This is useful in order to apply another elementary property of the family of $\call F$-subharmonics (or $\call G$-subharmonics), namely the \emph{maximum property}~\cite[Proposition~D.1(B)]{CHLP22} (see also~\Cref{elemprop}\textit{(ii)}). This property says that: $u, v \in \call F(X) \ \Rightarrow \ \max\{u, v\} \in \call F(X)$. Applying the maximum property to the pairs $(u, \phi_x - m)$, $(v, \psi_x -m)$, for $m \in \N$, where $\phi_x - m$ and $\psi_x - m$ are subharmonic by the negativity property, one obtains two approximating truncated sequences of subharmonics  $u_m \in \call F(U_x)$, $v_m \in \call G(U_x)$, which are bounded on $U_x$ and decrease to the limits $u$, $v$, respectively as $m \to \infty$. The boundedness on $U_x$ allows one to apply~\cite[Theorem~8.2]{HL09} in order to produce, via the sup-convolution, two sequences of approximating quasi-convex subharmonics $u_m^\epsilon, v_m^\epsilon$, which are decreasing with pointwise limits $u_m, v_m$, respectively. Finally, one can now apply the Subharmonic Addition Theorem for quasi-convex functions~\cite[Theorem~5.1]{HL16} and the \emph{decreasing sequence property}~\cite[Section~4, property~(5)]{HL09} (or~\cite[Proposition~D.1(E)]{CHLP22}, or~\Cref{elemprop}\textit{(v)}) in order to conclude the proof.

The only obstruction to generalizing this constant coefficient proof to the case of variable coefficients is the need for a variable coefficient version of the constant coefficient quasi-convex approximation result of~\cite[Theorem~8.2]{HL09}. All of the other steps are known to be valid also in the variable coefficient case: the local existence of smooth subharmonics easily follows  (see~\cite[Remark~4.6]{R20} or \cite[Remark~2.1.6]{PR22} )  from the triad of topological conditions (T) which one requires a subequation to satisfy (cf.~\cite[Section~3]{HL11}); the maximum property is straightforward and the decreasing sequence property can be proven essentially as in~\cite{HL09}, by using the Definitional Comparison \Cref{defcompa} (see~\Cref{elemprop}).
Therefore if one uses \Cref{prop:approx2.0} instead of~\cite[Theorem~8.2]{HL09}, one has all the ingredients in order to carry out essentially the same proof. 

Actually, it is worth noting one final thing: the parameters $\theta,\epsilon,\delta$ in~(\ref{scpert}), which are to be sent to $0$, are linked in such a way that
\[
\text{\emph{a priori}, and in general one may suppose so, $\delta \dto 0\ \text{as}\ \theta \dto 0$ (cf.~(\ref{utp:uppboundeta}) and the def.~of $\delta$),}
\]
\[
\text{it is possible to let $\epsilon \dto 0$ with $\theta,\delta$ fixed (cf.~(\ref{epsleqd2/4M})),}
\]
\[
\text{letting $\theta \dto 0$ would force $\epsilon \dto 0$ as well (cf.~the relationships recalled above).}
\]
This suggests that one should first let $\epsilon \dto 0$ and then $\theta \dto 0$ (and thus $\delta \dto 0$).
Also, we have no \emph{a priori} information on the sign of the perturbing strict approximator in~(\ref{scpert}), namely $\theta\psi$; hence one cannot use the decreasing sequence property in order to deal with the limit $\theta \dto 0$.
Luckily enough, again thanks to the Definitional Comparison Lemma, another elementary property can be easily extended to variable coefficients, namely the \emph{uniform limits property}~\cite[Section~4, property~(5')]{CHLP22} (see~\Cref{elemprop}\textit{(vi)}); and the reader shall notice that, after computing the (decreasing) limit of $u^\epsilon_\theta$ as $\epsilon \dto 0$, one gets $u + \theta\psi$, which uniformly converges to $u$ as $\theta \dto 0$.

The theorem that we are going to state has a gradient-free analogue~\cite[Theorem~5.2]{CP21}, which has been proven by applying the same procedure, where~\cite[Lemma~5.6]{CP21} substitutes the quasi-convex approximation result~\cite[Theorem~8.2]{HL09}.

\begin{thm}[Subharmonic Addition for fiberegular $\cM$-monotone subequations] \label{sacd}
	Let $X \subset \R^n$ be open. Let $\call M$ be a constant coefficient monotonicity cone subequation and let $\call F, \call G \subset \call J^2(X)$ be fiberegular $\call M$-monotone subequations on $X$. For any subequation $\call H \subset \call J^2(X)$,
	\begin{equation*}\tag{Jet Addition}
	\cG_x + \cF_x \subset \cH_x, \ \ \forall \, x \in X
	\end{equation*}
implies
	\begin{equation*} \tag{Subharmonic Addition}
	\cG(X) + \cF(X) \subset \cH(X).
	\end{equation*}
	\end{thm}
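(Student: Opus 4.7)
I will follow verbatim the three-paragraph outline that precedes the statement, using \Cref{prop:approx2.0} in place of the constant-coefficient quasi-convex approximation result \cite[Theorem~8.2]{HL09}. Since $\cH$-subharmonicity is a local property, it suffices to prove $u+v \in \cH(B)$ for a sufficiently small ball $B \ssubset X$ around each point of $X$. On such a ball, property (T) supplies a quadratic strictly $\cM$-subharmonic $\psi$ (the strict approximator required in \Cref{thm:UTP} and \Cref{prop:approx2.0}) together with quadratic strictly $\cF$- and $\cG$-subharmonics $\varphi_\cF,\varphi_\cG \in C^2(\bar B)$. By the negativity axiom (N) and the maximum property, for each $m \in \N$ the truncations
\[
u_m \defeq \max\{u,\varphi_\cF - m\} \in \cF(B), \qquad v_m \defeq \max\{v,\varphi_\cG - m\} \in \cG(B)
\]
are bounded on $\bar B$ (the $\USC$ functions $u,v$ are bounded above on $\bar B$, and the truncations bound them below) and decrease pointwise to $u,v$ as $m \to \infty$.

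Fixing $m \in \N$, \Cref{prop:approx2.0} then furnishes, for each $\theta > 0$, constants $\delta_\theta > 0$ and $\epsilon_* = \epsilon_*(\theta,m) > 0$ such that the perturbed sup-convolutions
\[
u_{m,\theta}^{\epsilon} \defeq u_m^{\epsilon} + \theta\psi \in \cF(B_{\delta_\theta}), \qquad v_{m,\theta}^{\epsilon} \defeq v_m^{\epsilon} + \theta\psi \in \cG(B_{\delta_\theta})
\]
for all $\epsilon \in (0,\epsilon_*)$, with $B_{\delta_\theta} \defeq \{x \in B : d(x,\partial B) > \delta_\theta\}$. Both functions are quasi-convex (the sum of a $\tfrac{1}{\epsilon}$-quasi-convex sup-convolution and a smooth term with bounded Hessian), so by Alexandroff's theorem their classical $2$-jets exist and lie in $\cF_x,\cG_x$ at a.e.\ $x \in B_{\delta_\theta}$. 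The pointwise hypothesis $\cF_x + \cG_x \subset \cH_x$ then combines with the quasi-convex Subharmonic Addition Theorem \cite[Theorem~5.1]{HL16} (whose proof uses only fiberwise jet inclusions and therefore extends mechanically to variable-coefficient subequations) to yield
\[
u_{m,\theta}^{\epsilon} + v_{m,\theta}^{\epsilon} \in \cH(B_{\delta_\theta}).
\]

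The three remaining limits are then taken in the order $\epsilon \dto 0$, $\theta \dto 0$, $m \to \infty$, as prescribed in the discussion preceding the theorem. With $\theta,m$ fixed, the sup-convolutions decrease to $u_m,v_m$ as $\epsilon \dto 0$, so the decreasing-sequence property gives $u_m + v_m + 2\theta\psi \in \cH(B_{\delta_\theta})$. Next, fix any $B' \ssubset B$: for $\theta$ sufficiently small one has $B' \subset B_{\delta_\theta}$, and since $\psi \in C^2(\bar B)$ the perturbation $2\theta\psi$ converges uniformly to $0$ on $\bar{B'}$; the uniform-limits property then delivers $u_m + v_m \in \cH(B')$, whence $u_m + v_m \in \cH(B)$ by locality. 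A final application of the decreasing-sequence property as $m \to \infty$ yields $u + v \in \cH(B)$.

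The main technical obstacle is the coupling among the three parameters: the approximation of \Cref{prop:approx2.0} is only $\cF$-subharmonic on the shrunken set $B_{\delta_\theta}$ with $\delta_\theta \dto 0$ as $\theta \dto 0$, and the perturbation $\theta\psi$ has no definite sign, ruling out a direct monotone-convergence argument for the $\theta$-limit. The workaround of freezing a compactly contained $B' \ssubset B$ before shrinking $\theta$ and then invoking locality of $\cH$-subharmonicity is what makes the three-parameter limit procedure go through; all other ingredients (maximum, decreasing-sequence, and uniform-limits properties) extend to the variable-coefficient setting through routine applications of the Definitional Comparison Lemma.
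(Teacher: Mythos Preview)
Your proposal is correct and follows essentially the same approach as the paper: localize to a small ball where a quadratic strict $\cM$-approximator exists, truncate via the maximum property to reduce to bounded subharmonics, apply \Cref{prop:approx2.0} to obtain quasi-convex approximations, invoke the quasi-convex Subharmonic Addition Theorem of \cite{HL16}, and pass to the limits in the order $\epsilon \dto 0$ (decreasing sequences), $\theta \dto 0$ (uniform limits), $m \to \infty$ (decreasing sequences). The only cosmetic point you leave implicit is that the shrinking parameter $\delta_\theta$ must be taken as the minimum of the two values furnished by \Cref{prop:approx2.0} for $\cF$ and for $\cG$ separately, which the paper makes explicit.
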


\begin{proof}
	We have already outlined how a proof can be performed. For the sake of completeness, we give a brief sketch. Without loss of generality, suppose that $u \in \call F(X)$ and $v \in \call G(X)$ are bounded; indeed, if not, it suffices to proceed as follows:
	\begin{itemize}[leftmargin=*]
		\item for each $x \in X$, consider some ball $B \defeq B_\rho(x)$ and two quadratic subharmonics $\phi \in \call F(B)$ and $\psi \in \call G(B)$;
		\item for all $m \in\N$, define $u_m \defeq \max\{u, \phi-m\}$ and $v_m \defeq \max\{v, \psi-m\}$;
		\item prove the theorem for $u_m$ and $v_m$;
		\item apply the decreasing sequence property as $m \to \infty$.
	\end{itemize}
	Without loss of generality, also suppose that the fiber maps
	\[
	\Theta_{\call F}(x) \defeq \call F_x \quad\text{and}\quad \Theta_{\call G}(x) \defeq \call G_x
	\]
	are in fact uniformly continuous on $X$; indeed, again, if not, by the local nature of the definition of subharmonicity on $X$, it suffices to show that
	\[
	\call F(\Omega) + \call G(\Omega) \subseteq \call H(\Omega) \qquad \forall\Omega \ssubset X.
	\]
	Finally, as noted at the beginning of this subsection, after possibly choosing a smaller ball $B$, property (T) assures that $\call M$ admits a (quadratic) strict approximator on $\bar B$, so that we may assume without loss of generality that $\call M$ admits a strict approximator $\psi$ on $\bar X$.
	
	Thanks to these reductions, we are in the situation where all the hypotheses of Theorems~\ref{thm:UTP} and~\ref{prop:approx2.0} hold. Therefore we know that there exist two nets of quasi-convex functions
	\[
	u^\epsilon_\theta \in \call F(X_\delta), \quad v^\epsilon_\theta \in \call G(X_\delta)
	\]
	where the parameter $\delta$ is chosen as $\delta \defeq \min\{\delta_{\call F}, \delta_{\call G}\}$, where $\delta_{\call F}$ and $\delta_{\call G}$ are those coming from Theorem \ref{thm:UTP}, associated to the subequations $\call F$ and $\call G$, respectively. By the Subharmonic Addition Theorem for quasi-convex functions~\cite[Theorem~5.1]{HL16}, one has
	\[
	u^\epsilon_\theta + v^\epsilon_\theta \in \call H(X_\delta).
	\]
	Therefore, since we know that
	\[
	u^\epsilon_\theta + v^\epsilon_\theta \dto u + v + 2\theta\psi \quad \text{as}\ \epsilon \dto 0,
	\]
	by letting $\epsilon \dto 0$ the decreasing sequence property yields 
	\[
	u + v + 2\theta\psi \in \call H(X_\delta).
	\]
	Letting $\theta \dto 0$, by the uniform limit property and the fact that $X_\delta \uto X$ as $\delta \dto 0$,
	\[
	u+v \in \call H(X_{\delta^*}) \quad\text{for each $\delta^* > 0$ small}.
	\]
	This is equivalent to $u+v \in \call H(X)$, which is the desired conclusion.
\end{proof}

\section{Potential theoretic comparison by the monotonicity-duality method}\label{sec:SAaC}

In this section, we present a flexible method for proving {\bf {\em comparison}} (the comparison principle) in a fiberegular $\cM$-monotone nonlinear potential theory. The method works with sufficient monotonicity; that is, when the (constant coefficient) monotonicity cone subequation $\cM$ admits a {\bf {\em strict approximator}} $\psi$ on a given domain $\Omega \ssubset \R^n$, which we recall is a function $\psi \in \USC(\overline{\Omega}) \cap C^2(\Omega)$ that is strictly $\cM$-subharmonic on $\Omega$. Using monotonicity  and duality, comparison is a consequence of the following constant coefficient Zero Maximum Principle (ZMP). We give two versions. The first is the  ``elliptic'' version in Theorem 6.2 of \cite{CHLP22} which uses a boundary condition on the entire boundary. The second is a  ``parabolic'' version  which uses a boundary condition on a proper subset of the boundary and generalizes Theorem 12.37 of \cite{CHLP22}.

\begin{thm}[ZMP for dual constant coefficient monotonicity cone subequations] \label{thm:zmp}
Suppose that $\call M$ is a constant coefficient monotonicity cone subequation that admits a strict approximator on a domain $\Omega \ssubset \R^n$. Then the \emph{zero maximum principle} holds for $\tildee{\call M}$ on $\bar\Omega$; that is,
\[\tag{ZMP}
z \leq 0 \ \text{on $\de\Omega$} \quad \implies \quad z \leq 0 \ \text{on $\Omega$}
\]
for all $z \in \USC(\bar\Omega) \cap \tildee{\call M}(\Omega)$.

If, in addition, the strict approximator $\psi$ satisfies
\begin{equation}\label{SA_par}
\psi \equiv-\infty \text { on } \partial \Omega \setminus \partial^{-} \Omega
\end{equation}
for some $\partial^{-} \Omega \subset \partial \Omega$, then the zero maximum principle holds in the following form:
\[ \tag{ZMP\textsuperscript{--}}
z \leq 0 \ \text{on $\de^{-}\Omega$} \quad \implies \quad z \leq 0 \ \text{on $\Omega$}
\]
for all $z \in \USC(\bar\Omega) \cap \tildee{\call M}(\Omega)$.

\end{thm}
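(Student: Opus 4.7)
The plan is to prove both versions by a unified perturbation-and-contradiction argument that exploits the strict approximator $\psi$. Since $\psi \in \USC(\bar\Omega)$ is bounded above and property (N) for $\cM$ implies $\intr\cM + (s,0,0) \subset \intr\cM$ for every $s \leq 0$, replacing $\psi$ by $\psi - C$ for a sufficiently large $C \geq 0$ preserves both strict $\cM$-subharmonicity on $\Omega$ and the blow-up condition \eqref{SA_par}; hence I may assume $\psi \leq 0$ on $\bar\Omega$. For $\epsilon > 0$, I would set $z_\epsilon := z + \epsilon \psi \in \USC(\bar\Omega)$ and aim to prove $z_\epsilon \leq 0$ on $\Omega$ for every small $\epsilon > 0$. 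Since $\psi$ is finite on $\Omega$, letting $\epsilon \to 0^+$ then gives $z \leq 0$ on $\Omega$.

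The key technical step is to show that $z_\epsilon$ is \emph{strictly} $\wt\cM$-subharmonic on $\Omega$, meaning $J^{2,+}_x z_\epsilon \subset \intr\wt\cM$ for every $x \in \Omega$. Indeed, for any upper test function $\varphi \in C^2$ for $z_\epsilon$ at $x$, the function $\varphi - \epsilon\psi$ is a $C^2$ upper test function for $z$, so $J^2_x\varphi - \epsilon J^2_x\psi \in \wt\cM$; then
\[
J^2_x\varphi = \bigl(J^2_x\varphi - \epsilon J^2_x\psi\bigr) + \epsilon J^2_x\psi \in \wt\cM + \epsilon\,\intr\cM.
\]
The claim thus reduces to the elementary inclusion $\wt\cM + \intr\cM \subset \intr\wt\cM$, which follows by upgrading the monotonicity-duality formula \eqref{mono_dual}, applied with $\cF = \cM$ (so that $\cM + \wt\cM \subset \wt\cM$, using that $\cM$ is a convex cone with vertex at the origin), by means of the non-emptiness of $\intr\cM$ together with the cone-convexity structure.

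Finally, I would argue by contradiction: suppose $z > 0$ somewhere in $\Omega$; then for all sufficiently small $\epsilon > 0$ one still has $\sup_\Omega z_\epsilon > 0$. In the elliptic version, $z_\epsilon \leq z \leq 0$ on $\partial\Omega$; in the parabolic version, $z_\epsilon \leq 0$ on $\partial^-\Omega$ and $z_\epsilon \equiv -\infty$ on $\partial\Omega \setminus \partial^-\Omega$ thanks to \eqref{SA_par}. Either way, the upper semicontinuity of $z_\epsilon$ and compactness of $\bar\Omega$ force the positive supremum of $z_\epsilon$ to be attained at some $x_\epsilon \in \Omega$. The constant test function $\varphi \equiv z_\epsilon(x_\epsilon)$ then yields $(z_\epsilon(x_\epsilon), 0, 0) \in J^{2,+}_{x_\epsilon} z_\epsilon \subset \intr\wt\cM$. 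However, from the duality definition \eqref{dual} and the topological property (T1) of $\cM$ one obtains the identification $\intr\wt\cM = \{J \in \cJ^2 : -J \notin \cM\}$; and the point $-(z_\epsilon(x_\epsilon), 0, 0)$ lies in the minimal monotonicity cone $\cM_0 \subset \cM$ (since $0 \in \cM$ and $\cM$ is $\cM_0$-monotone by (P) and (N)). This contradiction proves the theorem. The main obstacle I expect is not conceptual but bookkeeping: carefully propagating strictness through the perturbation via the inclusion $\wt\cM + \intr\cM \subset \intr\wt\cM$ and correctly identifying $\intr\wt\cM$, both of which rest only on the subequation axioms and the cone structure of $\cM$.
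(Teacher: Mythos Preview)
Your argument is correct and unifies both versions nicely, but it follows a slightly different route from the paper. The paper cites the elliptic (ZMP) from \cite[Theorem~6.2]{CHLP22} and proves (ZMP\textsuperscript{--}) by applying the Definitional Comparison Lemma (\Cref{defcompa}) with $\cF = \wt{\cM}$: since $\varepsilon\psi - m$ is $C^2$ and strictly $\cM = \wt{\wt{\cM}}$-subharmonic, and $z + (\varepsilon\psi - m) \leq 0$ on all of $\partial\Omega$ (using $\psi \equiv -\infty$ on $\partial\Omega \setminus \partial^-\Omega$), definitional comparison gives $z + \varepsilon\psi - m \leq 0$ on $\Omega$, and one lets $m,\varepsilon \to 0$. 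Your proof instead unpacks this black box: you show directly that $z_\varepsilon = z + \varepsilon\psi$ is \emph{strictly} $\wt{\cM}$-subharmonic via the inclusion $\wt{\cM} + \intr\cM \subset \intr\wt{\cM}$, and then run the interior-maximum contradiction by hand using the constant test function and the identification $\intr\wt{\cM} = \{J : -J \notin \cM\}$. The perturbation structure is the same (your normalization $\psi \leq 0$ absorbs the paper's $-m$), but your version is more self-contained, while the paper's version isolates the comparison step into a reusable lemma.
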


\begin{proof} The first statement has been shown in \cite[Theorem~6.2]{CHLP22}. To get its version on the ``reduced'' boundary $\de^{-}\Omega$, one may argue as follows. Since  $\intr\mathcal{M}$ has property (N) and since $\cM$ is a cone, one has
\begin{equation}\label{ZMP1}
\mbox{ $\varepsilon \psi-m$ is strictly $\mathcal{M}$-subharmonic on $\Omega$ \ \  for each $m > 0$ and each $\veps > 0$.}
\end{equation}
Moreover, since $\psi \in \USC(\overline{\Omega})$, there exists $M$ such that $\psi \leq M$ on $\bar{\Omega}$ and hence
\begin{equation}\label{ZMP2}
\mbox{$\varepsilon \psi-m \leq 0$ on $\bar{\Omega}$ \ \ for each $m > 0$ and each $\veps \in \left( 0, \frac{m}{M} \right)$.}
\end{equation}

On the one hand, $z \leq 0$ on $\partial^{-} \Omega$ by hypothesis and hence, by \eqref{ZMP2}, one has
\[
\mbox{ $z+\varepsilon \psi-m \leq 0 $ on $\partial^{-} \Omega$ \ \   for each $m > 0$ and each $\veps \in \left( 0, \frac{m}{M} \right)$.}
\]
On the other hand,
\[
z+\varepsilon \psi-m \leq 0 \text { on } \partial \Omega \setminus \partial^{-} \Omega,
\]
because $\psi\left(\partial \Omega \setminus \partial^{-} \Omega\right)=\{-\infty\}$, and $z$ is bounded from above on $\bar{\Omega}$.

Therefore $z+\varepsilon \psi-m \leq 0$ on $\partial \Omega$ where $z$ is $\wt{\cM}$-subharmonic on $\Omega$ by hypothesis and  $\veps \psi - m$ is $C^2$ and strictly $\cM$-subharmonic on $\Omega$ by \eqref{ZMP1}. Hence  
\begin{equation}\label{ZMP3}
z+\varepsilon \psi-m \leq 0 \text { on } \Omega
\end{equation}
by the Definitional Comparison (Lemma \ref{defcompa}) with $\cF=\widetilde{\cM}$ and $\widetilde{\cF}=\widetilde{\widetilde{\cM}}=\cM$. Taking the limit in \eqref{ZMP3} as $m, \epsilon \searrow 0$ gives $z \leq 0$ on $\Omega$.
\end{proof}

The following is a general result for fiberegular $\cM$-monotone nonlinear potential theories.

\begin{thm}[A General Comparison Theorem] \label{thm:GCT}
Let $\Omega \ssubset \R^n$ be a bounded domain. Suppose that a subequation $\call F \subset \call J^2(\Omega)$ is fiberegular and $\call M$-monotone on $\Omega$ for some monotonicity cone subequation $\call M$. If $\call M$ admits a strict approximator on $\Omega$, then \emph{comparison} holds for $\call F$ on $\bar\Omega$; that is,
\begin{equation}\tag{CP}\label{cp}
u \leq w \ \text{on $\de\Omega$} \quad \implies \quad u \leq w \ \text{on $\Omega$}
\end{equation}
for all $u \in \USC(\bar\Omega)$, $\call F$-subharmonic on $\Omega$, and $w \in \LSC(\bar\Omega)$, $\call F$-superharmonic on $\Omega$.

 If, in addition, the strict approximator is $-\infty$ on $\partial \Omega \setminus \partial^{-} \Omega$, for some $\partial^{-} \Omega \subset \partial \Omega$, then
\begin{equation}\tag{CP\textsuperscript{--}}\label{cpm}
u \leq w \ \text{on $\de^{-}\Omega$} \quad \implies \quad u \leq w \ \text{on $\Omega$}
\end{equation}
for all $u \in \USC(\bar\Omega)$, $\call F$-subharmonic on $\Omega$, and $w \in \LSC(\bar\Omega)$, $\call F$-superharmonic on $\Omega$.
\end{thm}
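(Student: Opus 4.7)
The plan is to implement the \emph{monotonicity-duality method}, which proceeds by converting the two-function comparison question into a one-sided statement about dual subharmonics to which the Zero Maximum Principle of Theorem \ref{thm:zmp} can be applied. First, I would invoke the duality reformulation of superharmonicity: since $w \in \LSC(\bar\Omega)$ is $\cF$-superharmonic on $\Omega$, the equivalence recorded in \eqref{Vsuper3} gives $-w \in \USC(\bar\Omega)$ with $-w \in \wt{\cF}(\Omega)$. Setting $z := u + (-w) = u - w \in \USC(\bar\Omega)$, the boundary hypothesis $u \leq w$ on $\partial\Omega$ (respectively on $\partial^-\Omega$) reads $z \leq 0$ on $\partial\Omega$ (respectively on $\partial^-\Omega$).

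Second, I would combine the monotonicity with duality at the jet level. Since $\cF$ is $\cM$-monotone, the duality property \eqref{dual_sums} transfers monotonicity to the dual, so $\wt{\cF}$ is also $\cM$-monotone, and the fundamental monotonicity-duality formula \eqref{mono_dual} gives the fiberwise jet addition
\begin{equation*}
\cF_x + \wt{\cF}_x \subset \wt{\cM}, \quad \forall\, x \in \Omega,
\end{equation*}
where $\wt{\cM}$ is a constant-coefficient subequation (also a cone).

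Third, I would upgrade this jet addition to a subharmonic addition for $z$. By Proposition \ref{todual}(b), fiberegularity of $\cF$ implies fiberegularity of $\wt{\cF}$, and $\wt{\cF}$ is $\cM$-monotone as noted. Therefore both $\cF$ and $\wt{\cF}$ satisfy the hypotheses of the Subharmonic Addition Theorem \ref{sacd}, which applied with $\cH := \Omega \times \wt{\cM}$ (a constant-coefficient subequation on $\Omega$) yields
\begin{equation*}
z = u + (-w) \in \wt{\cM}(\Omega).
\end{equation*}

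Finally, I would close the argument with the Zero Maximum Principle. For the elliptic version, since $\cM$ admits a strict approximator on $\Omega$ and $z \in \USC(\bar\Omega) \cap \wt{\cM}(\Omega)$ with $z \leq 0$ on $\partial\Omega$, Theorem \ref{thm:zmp} (ZMP) gives $z \leq 0$ on $\Omega$, which is (CP). For the parabolic version, the strengthened hypothesis $\psi \equiv -\infty$ on $\partial\Omega \setminus \partial^-\Omega$ lets us apply the reduced-boundary variant (ZMP$^-$) of the same theorem directly to $z$, yielding (CP$^-$). The main obstacle — or rather, the place where the two hypotheses pull their weight — is that fiberegularity plus $\cM$-monotonicity are exactly what are needed to run the subharmonic addition step (through the Uniform Translation Property of Theorem \ref{thm:UTP} and the quasi-convex approximation of Theorem \ref{prop:approx2.0}), while the existence of a strict approximator for $\cM$ on $\Omega$ is precisely the input required by the ZMP; so the theorem really is the assembly of the three pillars built in Sections \ref{sec:FR}, \ref{sec:QCA}, and the present section.
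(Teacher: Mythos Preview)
Your proposal is correct and follows essentially the same route as the paper's proof: reduce via duality to the Zero Maximum Principle for $z = u - w$, use the jet addition formula $\cF_x + \wt{\cF}_x \subset \wt{\cM}$, invoke the Subharmonic Addition Theorem to conclude $z \in \wt{\cM}(\Omega)$, and then apply Theorem \ref{thm:zmp}. If anything, you are slightly more explicit than the paper in verifying that $\wt{\cF}$ is fiberegular and $\cM$-monotone (via Proposition \ref{todual}(b) and \eqref{dual_sums}) before invoking Theorem \ref{sacd}, which is a point the paper passes over silently.
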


\begin{proof}
As noted in \eqref{Vsuper3}, by duality, $w$ is $\call F$-superharmonic on $\Omega$ if and only the function $v \defeq -w$ is $\tildee{\call F}$-subharmonic in $\Omega$. Hence the the comparison principle (CP) is equivalent to
\begin{equation}\tag{CP$'$} \label{CP'}
u + v \leq 0 \ \text{on $\partial \Omega$} \quad \implies \quad u + v \leq 0 \ \text{on $\Omega$}
\end{equation}
for all $u \in \USC(\overline{\Omega}) \cap \cF(\Omega)$ and $v \in \USC(\overline{\Omega}) \cap \wt{\cF}(\Omega)$. Obviously, \eqref{CP'} is equivalent to the zero maximum principle (ZMP) for $z \defeq u + v$, sums of $\cF$ and $\wt{\cF}$-subharmonics.

By elementary properties of the Dirichlet dual~\cite{HL11, CHLP22, R20} one knows that monotonicity and duality gives the jet addition formula
\[
\call F_x + \tildee{\call F}_x \subset \tildee{\call M}, \ \ \forall \, x \in \Omega,
\]
as noted in \eqref{mono_dual} and recalled in \eqref{jet_add}. Then the Subharmonic Addition Theorem~\ref{sacd} yields the subharmoniic addition relation
\[
\call F(\Omega) + \tildee{\call F}(\Omega) \subset \tildee{\call M}(\Omega).
\]
Therefore $z \in \tildee{\call M}(\Omega)$ and the desired conclusion follows from \Cref{thm:zmp}. The proof of \eqref{cpm} is completely analogous.
\end{proof}

As discussed in the introduction, 
the utility of the General Comparison Theorem \ref{thm:GCT} is greatly facilitated by the detailed study of monotonicity cone subequations in \cite{CHLP22}. For the convenience of the reader, we redroduce that discussion here. There is a three parameter {\em fundamental family} of monotonicity cone subequations (see Definition 5.2 and Remark 5.9 of \cite{CHLP22}) consisting of
\begin{equation}\label{fundamental_family1}
\cM(\gamma, \cD, R):= \left\{ (r,p,A) \in \cJ^2: \ r \leq - \gamma |p|, \ p \in \cD, \ A \geq \frac{|p|}{R}I \right\}
\end{equation}
where
\begin{equation}\label{fundamental_family2}
\gamma \in [0, + \infty), R \in (0, +\infty] \ \text{and} \ \cD \subseteq \R^n,
\end{equation}
where $\cD$ is a {\em directional cone}; that is, a closed convex cone with vertex at the origin and non-empty interior . The family is fundamental in the sense that for any monotonicity cone subequation, there exists an element $\cM(\gamma, \cD, R)$ of the familly with $\cM(\gamma, \cD, R) \subset \cM$ (see Theorem 5.10 of \cite{CHLP22}). Hence if $\cF$ is an $\cM$-monotone subequation, then it is $\cM(\gamma, \cD, R)$-monotone for some triple $(\gamma, \cD, R)$. Moreover, from Theorem 6.3 of \cite{CHLP22}, given any element $\cM = \cM(\gamma, \cD, R)$ of the fundamental family, one knows for which domains $\Omega \ssubset \R^n$ there is a $C^2$-strict $\cM$-subharmonic and hence for which domains $\Omega$ one has the (ZMP) for $\wt{\cM}$-subharmonics according to Theorem \ref{thm:zmp}. There is a simple dichotomy. If $R = + \infty$, then arbitrary bounded domains $\Omega$ may be used, while in the case of $R$ finite, any $\Omega$ which is contained in a translate of the truncated cone $\cD_R := \cD \cap B_R(0)$.

As a corollary, one has \Cref{ffctcs} below, which is the generalization to fiberegular subequations of the Fundamental Family Comparison Theorem~\cite[Theorem~7.6]{CHLP22}. The proof, which we omit, essentially amounts to showing that any fundamental cone defined in~\cite[Section~5]{CHLP22} (and recalled in \eqref{fundamental_family1}-\eqref{fundamental_family2}) admits a strict approximator on suitable domains (as showed in~\cite[proof of Theorem~6.3]{CHLP22}), in order to apply Theorem \ref{thm:GCT}.

\begin{thm}[The Fundamental Family Comparison Theorem] \label{ffctcs}
Let $\call F \subset \call J^2(\Omega)$ be a fiberegular $\call M$-monotone subequation on a bounded domain $\Omega \ssubset \R^n$, for some constant coefficient monotonicity cone subequation $\call M \subset \call J^2$. Suppose that  
\begin{enumerate}[label=\it(\roman*), leftmargin=*, parsep=3pt]
\item either $\call M \supset \call M(\gamma, \call D, R)$, for some $\gamma,R \in (0,+\infty)$ and some directional cone $\call D$, and $\Omega$ is contained in a translate of the truncated cone $\cD_R:= \call D \cap B_R(0)$\footnote{That is, there exists $y \in \R^n$ such that $\Omega - y \subset \call D_R$.}
\item or $\call M \supset \call M(\gamma, \call D, \call P)$ (that is, $\call M  \supset \call M(\gamma, \call D, R)$ with $R = +\infty$). 
\end{enumerate}
Then the comparison principle (\ref{cp}) holds on $\Omega$.
\end{thm}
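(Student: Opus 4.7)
The plan is to read off this result as a direct corollary of the General Comparison Theorem \ref{thm:GCT}. That theorem already supplies the implication from ``$\cM$ admits a strict approximator on $\Omega$'' to comparison for $\cF$, so the only thing left to do is produce such a strict approximator under the geometric hypotheses of (i) and (ii).

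First I would reduce to the case where $\cM$ itself is one of the fundamental cones $\cM(\gamma, \cD, R)$. This is immediate: if $\cM \supset \cM(\gamma, \cD, R)$, then taking interiors preserves the inclusion, so $\intr \cM(\gamma, \cD, R) \subset \intr \cM$, and therefore any $C^2$ function which is strictly $\cM(\gamma, \cD, R)$-subharmonic on $\Omega$ is automatically strictly $\cM$-subharmonic on $\Omega$. Unwrapping definition \eqref{fundamental_family1}, what we need is $\psi \in \USC(\bar\Omega) \cap C^2(\Omega)$ satisfying the three strict pointwise conditions
\[
\psi(x) < -\gamma \lvert D\psi(x)\rvert, \qquad D\psi(x) \in \intr \cD, \qquad D^2\psi(x) > \tfrac{\lvert D\psi(x)\rvert}{R} I
\]
for every $x \in \Omega$ (where the last condition is simply $D^2\psi > 0$ when $R=+\infty$).

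In case (ii), with $R = +\infty$, I would fix any unit vector $e \in \intr \cD$ (possible since the directional cone has nonempty interior) and try an ansatz of the form $\psi(x) := -C e^{-\lambda \langle x,e\rangle} + \delta \lvert x\rvert^2$. The exponential term has gradient pointing along $e \in \intr \cD$ and furnishes a positive-semidefinite Hessian contribution, while $\delta \lvert x\rvert^2$ upgrades positive semi-definiteness to strict positivity. On the bounded set $\Omega$, one then tunes the parameters $C, \lambda, \delta > 0$ so that $\psi$ is sufficiently negative to absorb $\gamma \lvert D\psi\rvert$; this works for arbitrary bounded $\Omega$ with no geometric restriction.

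In case (i), with $R<+\infty$, after a translation I may assume $\Omega \subset \cD_R = \cD \cap B_R(0)$. Here the Hessian must dominate $\lvert D\psi\rvert/R$, and this is what makes the geometric hypothesis essential: one takes $\psi$ as a suitable combination of a linear term in the direction $e \in \intr \cD$ (to force $D\psi \in \intr \cD$), a quadratic $\tfrac{1}{2}\lvert x\rvert^2$ term scaled by a constant depending on $R$ (to pin down the Hessian-versus-gradient inequality using the diameter bound coming from $\Omega \subset \cD_R$), and a large negative constant (to secure $\psi < -\gamma \lvert D\psi\rvert$). The step I expect to be the main obstacle is precisely this coupled tuning in case (i): strengthening the gradient to lie deeper inside $\cD$ simultaneously raises the size of the Hessian one needs and forces $\psi$ to be more negative; the geometric assumption $\Omega \subset \text{translate of } \cD_R$ is exactly what decouples these competing demands, and the detailed algebra has already been carried out in the proof of Theorem 6.3 of \cite{CHLP22}. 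Once the approximator is in hand in either case, Theorem \ref{thm:GCT} immediately delivers \eqref{cp} on $\Omega$.
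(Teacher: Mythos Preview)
Your approach is correct and matches the paper's: the paper also omits the explicit proof, stating only that it amounts to constructing a strict approximator for the relevant fundamental cone (as done in the proof of Theorem~6.3 of \cite{CHLP22}) and then applying Theorem~\ref{thm:GCT}. One small slip in your sketch: for $\psi(x) = -Ce^{-\lambda\langle x,e\rangle} + \delta\lvert x\rvert^2$ the Hessian contribution from the exponential term is $-C\lambda^2 e^{-\lambda\langle x,e\rangle}\, e\otimes e$, which is \emph{negative} semidefinite, not positive; since you rightly defer the actual construction to \cite{CHLP22} this does not affect your argument, but the heuristic ansatz as written would need adjustment.
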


\section{Characterizations of dual cone subharmonics}\label{sec:Char}

In this section, we will present characterizations of the subharmonics $\wt{\cM}(X)$ determined by the dual of a monotonicity cone subequation $\cM$. Before presenting the characterizations, a few remarks are in order. 

First, interest in such characterizations comes from the fact that the space of dual subharmonics $\wt{\cM}(X)$ on an open subset $X \subset \R^n$ associated to a constant coefficient monotonicity cone subequation $\cM \subset \cJ^2$ plays a key role in the monotonicity-duality method for proving comparison through the subharmonic addition theorem
\begin{equation}\label{recall_SAT}
\cF(X) + \wt{\cF}(X) \subset \wt{\cM}(X)
\end{equation}
if $\cF$ (and hence $\wt{\cF}$) is a fiberegular $\cM$-monotone subequation. This reduces comparison on a domain $\Omega \ssubset X$ to the zero maximum principle (ZMP) for $\wt{\cM}$-subharmonics, which is in turn implied by the existence of a strict approximator $\psi \in C^2(\Omega) \cap C(\overline{\Omega})$ (a strict $\wt{\cM}$-subharmonic on $\Omega$). Moroever, by \eqref{recall_SAT}, $\wt{\cM}(X)$ contains the differences of all $\cF$-subharmonics and $\cF$-superharmonics and $\cM$ has constant coefficients, even if $\cF$ does not.

Second, since
\begin{equation}\label{mono_mono}
\cM_1 \subset \cM_2 \ \ \Rightarrow \ \ \wt{\cM}_2 \subset \wt{\cM}_1,
\end{equation}
if one enlarges the monotonicity cone $\cM$, the chances of finding a strict approximator improve, while the space $\cM(X)$ reduces, yielding a weaker (ZMP). This ``monotonicity'' in the family of monotonicity cones \eqref{mono_mono} will be used in the characterizations we present.

Third, since
\begin{equation}\label{int_union}
\cM = \cM_1 \cap \cM_2 \ \ \Rightarrow \ \ \wt{\cM} = \wt{\cM}_1 \cup \wt{\cM}_2,
\end{equation}
and since the fundamental family $\cM(\gamma, \cD, R)$ is constructed from the intersection of eight elementary cones (see Definition 5.2 and Remark 5.9 of \cite{CHLP22}), 
one can use this fact in the proof of characterizations for cones in the fundamental family.

For a given monotonicity cone subequation $\wt{\cM} \subset \cJ^2$ and an open set $X \subset \R^n$, we will seek characterizations of 
\begin{equation}\label{DCSX}
\wt{\cM}(X) = \{ u \in \USC(X): \ u \ \text{is $\cM$-subharmonic on $X$} \}
\end{equation}
as well as
\begin{equation}\label{DCS_Omega}
\wt{\cM}(\overline{\Omega}) = \{ u \in \USC(\overline{\Omega}): \ u \ \text{is $\cM$-subharmonic on $\Omega$} \}, \ \ \Omega \Subset X
\end{equation}
in terms of {\em sub-$\cA$ functions} in the sense of the following definition.

\begin{defn} \label{def:subA}
	Given $X \subset \R^n$ and a collection of functions $\cA = \bigsqcup_{\Omega \Subset X} \cA(\overline{\Omega})$ where $\emptyset \neq \cA(\overline{\Omega}) \subset \LSC(\overline{\Omega})$ for each $\Omega$, a function $u \in \USC(X)$ is said to be {\em sub-$\cA$ on $X$} if $u$ satisfies the following comparison principle: for each $\Omega \Subset X$
	\begin{equation}\label{sub_A}
	u \leq a \  \ \text{on} \ \partial \Omega \ \ \Rightarrow \ \ u \leq a \  \ \text{on} \ \Omega, \quad \text{for each} \ a \in \cA(\overline{\Omega}).
	\end{equation}
	In this case we will write $u \in \SA(X)$. With $\Omega \Subset X$ fixed, we will also denote by 
	\begin{equation}\label{subA_Omega}
	\SA(\overline{\Omega}) = \{ u \in \USC(\overline{\Omega}): \ \eqref{sub_A} \ \text{holds for each} \ a \in \cA(\overline{\Omega}) \}.
	\end{equation}
\end{defn}

With respect to these definitions we will address two problems.

\begin{prob}\label{prob_X}
	Given a monotonicity cone subequation $\cM \subset \cJ^2$ and given an open set $X \subset \R^n$, determine a collection of functions $\cA = \bigsqcup_{\Omega \Subset X} \cA(\overline{\Omega})$ on $X$ such that
	\begin{equation}\label{Char1}
	\wt{\cM}(X) + \SA(X)
	\end{equation}
	where $\SA(X)$ is defined as in the first part of Definition \ref{def:subA}.
\end{prob}

\begin{prob}\label{prob_Omega}
	Given a monotonicity cone subequation $\cM \subset \cJ^2$ and given an open set $ \Omega \Subset \R^n$, determine a class of functions $ \cA(\overline{\Omega})$ on $\overline{\Omega}$ such that
	\begin{equation}\label{Char2}
	\wt{\cM}(\overline{\Omega}) = \SA(\overline{\Omega})
	\end{equation}
	where $\SA(\overline{\Omega})$ is defined as in the second part of Definition \ref{def:subA}.
\end{prob}

Before presenting some motivating examples and the general results, a few remarks are in order.

\begin{remark}\label{rem:problem_versions} A solution to Problem \ref{prob_X} will automatically solve Problem \ref{prob_Omega} for each $\Omega \Subset X$. We will see that a key role is played by domains $\Omega$ such that
	\begin{equation}\label{strict_M_sub}
	\text{ there exists a $C^2$-strictly $\cM$-subharmonic function on $\Omega$}.
	\end{equation}
	The property \eqref{strict_M_sub} holds for arbitrary $\Omega$ for many subequation cones $\cM$, but not all. Moreover, as noted at the beginning of the section, we are interested in the validity of the (ZMP) for $\cM$ on $\overline{\Omega}$, so Problem \ref{prob_Omega} is interesting in its own right.
\end{remark}

\begin{remark}\label{rem:monotonicity} In both versions, there is an obvious ``monotonicity property''
	\begin{equation}\label{monotonicity_A}
	\cA_1(\overline{\Omega}) \subset \cA_2(\overline{\Omega}) \ \ \Rightarrow \ \ \SA_2(\overline{\Omega}) \subset  \SA_1(\overline{\Omega}),	
	\end{equation}
	since increasing the test functions $a$ makes the sub-property \eqref{sub_A} more restrictive. Hence the inclusion
	\begin{equation}\label{Char_Sub}
	\wt{\cM}(\overline{\Omega}) \subset \SA(\overline{\Omega})
	\end{equation}
	is made easier for ``smaller'' classes $\cA$, while enlarging $\cA$ will sharpen \eqref{Char_Sub} and help in the reverse inclusion
	\begin{equation}\label{Char_Super}
	\wt{\cM}(\overline{\Omega}) \supset \SA(\overline{\Omega})
	\end{equation}
\end{remark}

We now begin to discuss some motivating examples. As noted in Examples 2.5 and 2.6, a characterization of the form \eqref{Char1} of Problem \ref{prob_X} is already known for two of the elementary monotonicity cone subequations in the fundamental family, which we recall in the following two examples.

\begin{example}[Subaffine functions]\label{exe:SA} If $\call M = \call M(\call P) := \R \times \R^n \times \call P$ is the {\em convexity (cone) subequation}, then the dual cone is
	$$
	\tildee{\call M} = \{ (r,p,A) \in \call J^2: \ A \in \tildee{\call P}\} = \{ (r,p,A) \in \call J^2 :\ \lambda_n(A) \geq 0 \}
	$$
	and $\tildee{\call M}(X) = \mathrm{S}\call A(X)$ where $\cA = \{ \cA(\overline{\Omega})\}_{\Omega \Subset X}$ with
	\begin{equation}\label{SA_char}
	\call A(\bar \Omega) = {\rm Aff}(\bar\Omega) \defeq \{ a|_{\bar\Omega} :\ a \text{ affine on } \R^n \}, \ \ \Omega \ssubset X.
	\end{equation}
	$\SA (X)$ with $\cA$ defined by \eqref{SA_char} is the space of {\em subaffine functions}. This example appears in connection with every pure second order subequation $\call F$ and every pure second order (degenerate) elliptic operator $F$.
\end{example}

\begin{example}[Subaffine-plus functions]\label{exe:SAP} If $\call M = \call M(\call N, \call P) := \call N \times \R^n \times \call P $ is the {\em convexity-negativity (cone) subequation}, then the dual cone is
	$$
	\tildee{\call M} = \{ (r,p,A) \in \call J^2: \ \ r \in \call N \ \text{or} \ A \in \tildee{\call P} \} = \{ (r,p,A) \in \call J^2 :\ r \leq 0 \ \text{or} \ \lambda_n(A) \geq 0 \}
	$$
	and $\tildee{\call M}(X) = \mathrm{S}\call A(X)$ where $\cA = \{ \cA(\overline{\Omega})\}_{\Omega \Subset X}$ with
	\begin{equation}\label{SAP_char}
	\call A(\bar\Omega) = {\rm Aff}^+(\bar\Omega) \defeq \{ a \in \Aff(\bar\Omega) :\ a \geq 0 \}, \ \ \Omega \ssubset X.
	\end{equation}
	$\SA (X)$ with $\cA$ defined by \eqref{SAP_char} is the space of {\em subaffine-plus functions}. This example appears in connection with every gradient-free subequation $\call F$ and every gradient-free proper elliptic operator $F$.
\end{example}

The next example of an  elementary monotonicity cone subequation in the fundamental family is, by iteslf, not particularly interesting. However, we record it anyway to make another point about intersections.

\begin{example}[Sub-plus functions]\label{exe:SZ} The {\em negativity (cone) subequation} $\call M = \call M(\call N) := \call N \times \R^n \times \call S(n) $ is self-dual; that is, $\tildee{\call M} = \call M(\call N)$, and $\tildee{\call M}(X) = \mathrm{S}\call A(X)$ where $\cA = \{ \cA(\overline{\Omega})\}_{\Omega \Subset X}$ with
	\begin{equation}\label{SZ_char}
	\call A(\bar\Omega) = {\rm Plus}(\bar\Omega) \defeq \{ a|_{\bar\Omega} :\ a \text{ quadratic},\ a|_{\overline{\Omega}} \geq 0\}, \ \ \Omega \ssubset X.
	\end{equation}
	$\SA (X)$ with $\cA$ defined by \eqref{SZ_char} is the space of {\em sub-plus functions}. 
\end{example}

\begin{remark}[On intersections]\label{rem:intersections}
	In Example \ref{exe:SAP}, the monotonicity cone $\call M(\call N, \call P)= \call M(\call P) \cap \call M(\call N)$ and the dual of $\call M(\call N, \call P)$ is the union of the dual cones of $\call M(\call P)$ and $\call M(\call N)$ in accordance with \eqref{int_union}. Moreover, considering the three examples taken together, if we denote by
	$$
	\call M_1 = \call M(\call P),\ \call M_2 = \call M(\call N), \quad \call M = \call M_1 \cap \call M_2,
	$$ 
	and
	$$
	\call A_1 = {\rm Aff}(X), \quad \call A_2 = {\rm Plus}(X)
	$$
	in addition to \eqref{int_union} we also have
	\begin{equation}
	\tildee{\call M}(X) = \mathrm{S}\call A(X) \quad \text{with} \quad \call A(\bar\Omega) = \call A_1(\bar\Omega) \cap \call A_2(\bar\Omega), \ \ \Omega \ssubset X.
	\end{equation}
	This consideration leads us to ask: {\em under what conditions is it true that}
	\begin{equation}\label{intersection1}
	\wt{\cM_1 \cap \cM_2}(\overline{\Omega}) = \mathrm{S}(\cA_1 \cap \cA_2)(\overline{\Omega})?
	\end{equation}
\end{remark} 

We will give general characterization results which also give sufficient conditions under which \eqref{intersection1} holds. We begin with a lemma on the ``reverse inclusion'' of \eqref{Char_Super} which exploits 
 part (b) of the Definitional Comparison Lemma \ref{defcompa}.

\begin{lem}\label{lem:SAT_reverse} Suppose that $\cM \subset \cJ^2$ is a monotonicity cone subequation. Then its dual subharmonics satisfy 
	\begin{equation}\label{reverse_inclusion}
	\SA(X) \subset \wt{\cM}(X)
	\end{equation}
	where $\cA = \{\cA(\overline{\Omega})\}_{\Omega \Subset X}$ with 
	\begin{equation}\label{define_A}
	\cA(\overline{\Omega}) := \{ a_{|\overline{\Omega}}: \ \text{$a$ is quadratic and $-a$ is $\cM$-subharmonic in $\Omega$} \}.
	\end{equation}
	Moreover, for any pair of monotonicity come subequations $\cM_1$ and $\cM_2$ and with $\cA_1$ and $\cA_2$ defined as in \eqref{define_A}, one has the intersection property
	\begin{equation}\label{intersection_reverse}
	\mathrm{S}(\cA_1 \cap \cA_2)(X) \subset \wt{\cM_1 \cap \cM_2}(X)
	\end{equation}
\end{lem}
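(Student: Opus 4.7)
The plan is to argue by contradiction via part~(b) of the Definitional Comparison Lemma~\ref{defcompa}: I aim to show that for every $\Omega' \Subset X$ and every $v \in C^2(\Omega') \cap \USC(\bar{\Omega'})$ strictly $\cM$-subharmonic on $\Omega'$, if $u + v \leq 0$ on $\partial \Omega'$ then $u + v \leq 0$ on $\Omega'$. Assuming failure, the maximum $M := \max_{\bar{\Omega'}}(u + v)$ is positive and attained at some interior point $x_0 \in \Omega'$. Then $\varphi := M - v$ is a $C^2$ upper test function for $u$ at $x_0$, so the 2-jet $J_0 := (r_0,p_0,A_0) := (M - v(x_0), -Dv(x_0), -D^2 v(x_0))$ belongs to $J^{2,+}_{x_0}u$. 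The key observation is that $-J_0 = J^2_{x_0} v + (-M,0,0)$ lies in $\intr\cM$: indeed $J^2_{x_0} v \in \intr\cM$ by strict $\cM$-subharmonicity of $v$, $(-M,0,0) \in \cM_0$ since $-M \leq 0$, and $\intr\cM + \cM_0 \subset \intr\cM$ follows from property~(N) combined with the openness of $\intr\cM$.

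I would then exhibit a quadratic $a$ violating the sub-$\cA$ condition on a small ball $B_\rho(x_0)$. For parameters $\epsilon,\delta > 0$ to be chosen, set
\[
a(x) := r_0 + p_0 \cdot (x-x_0) + \tfrac12 \langle A_0(x-x_0), x-x_0 \rangle + \epsilon|x-x_0|^2 - \delta.
\]
The 2-jet of $-a$ at $x_0$ equals $-J_0 + (\delta,0,-2\epsilon I)$, which remains in $\intr\cM$ for $\epsilon,\delta$ sufficiently small, since $-J_0 \in \intr\cM$ is an open condition; continuity in $x$ of the 2-jet of the quadratic $-a$ then ensures $-a$ is $\cM$-subharmonic on $B_\rho(x_0)$ for $\rho$ small, so $a \in \cA(\bar B_\rho(x_0))$. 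On the other hand, a Taylor expansion of $\varphi$ at $x_0$ gives $\varphi(x) - a(x) = o(|x-x_0|^2) - \epsilon|x-x_0|^2 + \delta$; shrinking $\rho$ so that the remainder on $\partial B_\rho$ is bounded by $\tfrac{\epsilon}{2}\rho^2$ and then choosing $\delta < \tfrac{\epsilon}{2}\rho^2$, I obtain $\varphi < a$ on $\partial B_\rho$. Since $u \leq \varphi$ nearby, sub-$\cA$ forces $u \leq a$ on $B_\rho$, which contradicts $u(x_0) = r_0 > r_0 - \delta = a(x_0)$.

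The intersection statement~\eqref{intersection_reverse} is obtained by repeating the argument with $\cM$ replaced by $\cM_1 \cap \cM_2$: using the identity $\intr(\cM_1 \cap \cM_2) = \intr\cM_1 \cap \intr\cM_2$ and $\wt{\cM_1 \cap \cM_2} = \wt{\cM_1} \cup \wt{\cM_2}$ from \eqref{int_union}, any strictly $(\cM_1 \cap \cM_2)$-subharmonic test function $v$ is simultaneously strictly $\cM_1$- and $\cM_2$-subharmonic, and the resulting quadratic $-a$ has 2-jets in $\intr\cM_1 \cap \intr\cM_2$ throughout $B_\rho$, so it is both $\cM_1$- and $\cM_2$-subharmonic on $B_\rho$. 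Thus $a \in \cA_1(\bar B_\rho) \cap \cA_2(\bar B_\rho)$, and the sub-$(\cA_1 \cap \cA_2)$ hypothesis furnishes the same contradiction. The main technical point in both arguments is the coordinated choice of the three parameters: I first fix $\epsilon$ so that $-J_0 + (0,0,-2\epsilon I)$ still sits well inside $\intr\cM$ (respectively $\intr\cM_1 \cap \intr\cM_2$), then choose $\rho$ small enough to control both the Taylor remainder of $\varphi$ and the jet stability of $-a$ throughout $B_\rho$, and finally select $\delta < \tfrac{\epsilon}{2}\rho^2$ to close the barrier estimate on $\partial B_\rho$.
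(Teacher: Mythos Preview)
Your argument is correct, but it takes a considerably longer route than the paper's. The paper's proof is essentially a one-liner: it invokes part~(b) of the Definitional Comparison Lemma~\ref{defcompa} directly, using the clause that it suffices to test against \emph{quadratic} $v = -a$. Since the sub-$\cA$ hypothesis already gives comparison on every ball $B_\rho(x_0)$ with every quadratic $a$ such that $-a$ is $\cM$-subharmonic (hence in particular with those where $-a$ is \emph{strictly} $\cM$-subharmonic), the criterion of part~(b) is satisfied immediately, and $u \in \wt{\cM}(X)$.

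What you do instead is work with a general $C^2$ strictly $\cM$-subharmonic $v$, locate a bad interior maximum, and then build by hand a quadratic $a$ near $x_0$ that approximates $\varphi = M - v$, belongs to $\cA(\bar B_\rho)$, and violates the sub-$\cA$ property. This is valid, but it amounts to re-deriving the ``quadratic test functions suffice'' part of Lemma~\ref{defcompa}(b) inside the proof. The paper simply quotes that reduction and is done. Similarly, for the intersection statement the paper just observes that for quadratic $a$, $-a$ is $(\cM_1\cap\cM_2)$-subharmonic if and only if it is both $\cM_1$- and $\cM_2$-subharmonic, so $\cA_1 \cap \cA_2$ is exactly the class $\cA$ associated to $\cM_1 \cap \cM_2$, and the first part applies; you instead re-run the entire barrier construction. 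Your approach has the merit of being self-contained (it would work even without the quadratic clause in Lemma~\ref{defcompa}(b)), but the paper's is much shorter.
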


\begin{proof}
	For the claim \eqref{reverse_inclusion}, we assume that $u \in  \SA(\overline{X})$ and we show that $u \in \wt{\cM}(X)$ by using part (b) of the Definitional Comparison Lemma with $v = -a$ quadratic. It is enough to show that for every $x_0 \in X$, there exist arbitrary small balls $B_{\rho}(x_0) \Subset X$ such that
	\begin{equation}\label{comparison_a}
	\mbox{ $u - a \leq 0$ on $\partial B_{\rho}(x_0) \Longrightarrow u - a \leq 0$ on $B_{\rho}(x_0)$,}
	\end{equation}
	for each quadratic $a$ such that $-a$ is strictly $\cM$-subharmonic on $B_{\rho}(x_0)$.
	But we have \eqref{comparison_a} on {\bf every} ball for {\bf all} quadratic $a$ such that $-a$ is merely  $\cM$-subharmonic on $B_{\rho}(x_0)$ (by the hypothesis that $u \in \SA(X)$ with $\cA$ defined by \eqref{define_A}.
	
	For the intersection property \eqref{intersection_reverse}, for each $\Omega \Subset X$, consider
	$$
	\cA(\overline{\Omega}) := \{ a_{|\overline{\Omega}}: \ \text{$a$ is quadratic and $-a$ is $\cM_1 \cap \cM_2$-subharmonic in $\Omega$} \} = 	\cA_1(\overline{\Omega}) \cap 	\cA_2(\overline{\Omega}),
	$$
	where the last equality is merely the observation that for quadratic ($C^2$) functions $a$,
	$$
	-a \in (\cM_1 \cap \cM_2)(\Omega) \ \Leftrightarrow \ J^2_x(-a) \in \cM_1 \cap \cM_2, \ \forall x \in \Omega,
	$$
	which is equivalent to $J^2_x(-a) \in \cM_k$ for each $ x \in \Omega$ for $k = 1,2$. By the first part, we conclude that $\mathrm{S}(\cA_1 \cap \cA_2)(X) = \SA(X) \subset \wt{\cM_1 \cap \cM_2}(X)$. 
\end{proof}

Notice that Lemma \ref{lem:SAT_reverse} implies that for each $\Omega \Subset X$ one has also the reverse inclusions 
\begin{equation}\label{reverse_inclusions}
\SA(\overline{\Omega}) \subset \wt{\cM}(\overline{\Omega}) \quad \text{and} \quad 	\mathrm{S}(\cA_1 \cap \cA_2)(\overline{\Omega}) \subset \wt{\cM_1 \cap \cM_2}(\overline{\Omega})
\end{equation}

Next we give a lemma on the ``forward inclusion'' \eqref{Char_Sub} and the forward inclusion in the intersection property \eqref{intersection1} on $\Omega \Subset X$ which satisfy property \eqref{strict_M_sub}.

\begin{lem}\label{lem:SAT} Suppose that $\Omega$ admits a $C^2$ strict $\cM$-subharmonic for some monotonicity cone subequation $\cM \subset \cJ^2$. Then the following hold.
	\begin{itemize}
		\item[(a)] $\wt{\cM}(\overline{\Omega}) \subset  \SA(\overline{\Omega})$ for any class  $\cA(\overline{\Omega})$ such that $- \cA(\overline{\Omega}) \subset \cM(\overline{\Omega})$; that is, if
		\begin{equation}\label{M_sub}
		\cA(\overline{\Omega}) \subset - \cM(\overline{\Omega}) = \{ w \in \LSC(\overline{\Omega}): \ \ - w \ \text{is $\cM$-subharmonic on $\Omega$} \}. 
		\end{equation}
		\item[(b)] In particular, with $\cA$ as defined in \eqref{define_A}; that is, with
		\begin{equation}\label{define_A1}
		\cA(\overline{\Omega}) := \{ a_{|\overline{\Omega}}: \ \text{$a$ is quadratic and $-a$ is $\cM$-subharmonic in $\Omega$} \}, 
		\end{equation}
		one has the forward inclusion $	\wt{\cM}(\overline{\Omega})  \subset \SA(\overline{\Omega})$. Moreover, for any pair $\cM_1$ and $\cM_2$ and with $\cA_1$ and $\cA_2$ defined as in \eqref{define_A1} one has
		\begin{equation}\label{forward_inclusions}
		\wt{\cM_1 \cap \cM_2}(\overline{\Omega}) \subset \mathrm{S}(\cA_1 \cap \cA_2)(\overline{\Omega}),
		\end{equation}
		provided that $\Omega$ admits a $C^2$ strict $(\cM_1 \cap \cM_2)$-subharmonic.
	\end{itemize}
\end{lem}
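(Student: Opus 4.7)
The plan is to reduce everything to a Zero Maximum Principle argument, using the monotonicity-duality jet addition formula and Theorem \ref{thm:zmp}. For part (a), fix $u \in \wt{\cM}(\overline{\Omega})$ and $a \in \cA(\overline{\Omega})$ satisfying $u \leq a$ on $\partial\Omega$; I want to promote this boundary inequality to the interior. The key observation is that since $\cM$ is a convex cone with vertex at the origin, it is itself $\cM$-monotone, and hence by the elementary duality property \eqref{dual_sums} one obtains the jet addition $\wt{\cM}_x + \cM \subset \wt{\cM}_x$. Because $\cM$ has constant coefficients (so the fiber map is trivially Hausdorff continuous), the Subharmonic Addition Theorem \ref{sacd} applies to the two $\cM$-monotone subequations $\wt{\cM}$ and $\cM$, yielding
\[
\wt{\cM}(\Omega) + \cM(\Omega) \subset \wt{\cM}(\Omega).
\]
Since by hypothesis $-a \in \cM(\overline{\Omega})$, the function $z \defeq u - a = u + (-a)$ lies in $\USC(\overline{\Omega}) \cap \wt{\cM}(\Omega)$ and satisfies $z \leq 0$ on $\partial\Omega$.

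At this point I invoke the (ZMP) of Theorem \ref{thm:zmp} for $\wt{\cM}$: the standing hypothesis that $\Omega$ admits a $C^2$ strict $\cM$-subharmonic (interpreted as a strict approximator in the sense of Theorem \ref{thm:zmp}) furnishes exactly the ingredient required to conclude $z \leq 0$ on $\Omega$, i.e.\ $u \leq a$ on $\Omega$. This is the substantive step; everything else is formal.

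For part (b), the first assertion follows at once from part (a), since quadratic functions $a$ with $-a$ classically $\cM$-subharmonic on $\Omega$ are in particular elements of $-\cM(\overline{\Omega})$, using $C^2$-coherence. For the intersection statement, the key observation is the identity
\[
\cA_1(\overline{\Omega}) \cap \cA_2(\overline{\Omega}) = \bigl\{ a|_{\overline{\Omega}} :\ a \text{ quadratic},\ -a \in (\cM_1 \cap \cM_2)(\Omega) \bigr\},
\]
which holds because, for a $C^2$ function, being $(\cM_1 \cap \cM_2)$-subharmonic is equivalent to $J^2_x(-a) \in (\cM_1)_x \cap (\cM_2)_x$ fiberwise. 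Thus the class $\cA_1 \cap \cA_2$ is precisely the class \eqref{define_A1} associated with $\cM \defeq \cM_1 \cap \cM_2$, which is itself a monotonicity cone subequation (intersection of closed convex cones preserves the cone structure, and the monotonicity axioms (P), (N) pass to intersections; property (T) is inherited via the assumed existence of a $C^2$ strict $(\cM_1 \cap \cM_2)$-subharmonic). Applying part (a) with this $\cM$ and the corresponding $\cA$ yields $\wt{\cM_1 \cap \cM_2}(\overline{\Omega}) \subset \mathrm{S}(\cA_1 \cap \cA_2)(\overline{\Omega})$.

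The only step that could cause friction is verifying that the Subharmonic Addition Theorem \ref{sacd} is applicable in the form used above: one needs $\wt{\cM}$ itself to be fiberegular and $\cM$-monotone. Fiberegularity is automatic from the constant coefficients of $\cM$ (hence of $\wt{\cM}$), and $\cM$-monotonicity of $\wt{\cM}$ is the content of the dual monotonicity relation \eqref{dual_sums}. So in fact there is no genuine obstacle: the whole proof is a transparent combination of duality, the cone property of $\cM$, Subharmonic Addition, and the already-established (ZMP) of Theorem \ref{thm:zmp}.
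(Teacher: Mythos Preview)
Your proof is correct and follows essentially the same route as the paper: reduce the sub-$\cA$ property to the (ZMP) for $z = u - a$, use the cone identity $\cM + \cM \subset \cM$ together with duality to get the jet addition $\wt{\cM} + \cM \subset \wt{\cM}$, apply the Subharmonic Addition Theorem \ref{sacd} to conclude $z \in \wt{\cM}(\Omega)$, and then invoke Theorem \ref{thm:zmp}. Your treatment of part (b), identifying $\cA_1 \cap \cA_2$ as the class \eqref{define_A1} for $\cM_1 \cap \cM_2$ and reapplying part (a), also matches the paper; the aside about property (T) for $\cM_1 \cap \cM_2$ is not needed for the argument (the paper simply applies part (a) under the stated strict-subharmonic hypothesis), but it does no harm.
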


\begin{proof} For the proof of part (a), given $u \in \wt{\cM}(\overline{\Omega})$ the sub-$\cA$ property \eqref{sub_A} is equivalent to the (ZMP) for all differences $z:= u - a$ with $a \in \cA(\overline{\Omega})$; that is,
	\begin{equation}\label{SA_property_ZMP}
	\mbox{ $u - a \leq 0$ on $\partial \Omega \ \ \Rightarrow \ \ u - a \leq 0$ on $\Omega$, \ \ for each $a \in \cA(\overline{\Omega})$.}
	\end{equation}
	Since $\cM$ admits a $C^2$ strict $\cM$-subharmonic, the (ZMP) holds for each $z \in\wt{\cM}(\overline{\Omega})$. Hence it suffices to have the subharmonic difference formula
	\begin{equation}\label{SDF}
	\wt{\cM}(\Omega) - \cA(\Omega) \subset \wt{\cM}(\Omega),
	\end{equation}
	but this holds under the assumption $ - \cA(\Omega) \subset \cM(\Omega)$. Indeed,  for any monotonicity cone subequation $\cM \subset \cJ^2$, one has $\cM + \cM \subset \cM$ and hence by duality one has the jet addition formula
	$\wt{\cM} + \cM \subset \wt{\cM}$. Hence by the Subharmonic Addition Theorem \ref{sacd} for every open set $X$ one has
	\begin{equation}\label{SAT}
	\wt{\cM}(X) + \cM(X) \subset \wt{\cM}(X).
	\end{equation}	
	
	The first claim in part (b) is immediate from part (a) as the choice of $\cA$  in \eqref{define_A1} is one allowed by \eqref{M_sub}. Finally, assuming that $\Omega$ admits a $C^2$ strict $(\cM_1 \cap \cM_2)$-subharmonic, by part (a) one has 
	$$
	\wt{\cM_1 \cap \cM_2}(\overline{\Omega}) \subset \SA(\overline{\Omega}),
	$$
	for any $\cA(\overline{\Omega})$ such that
	$$
	\cA(\overline{\Omega}) \subset - (\cM_1 \cap \cM_2)(\overline{\Omega}) = \{ w \in \LSC(\overline{\Omega}): \ \ - w \ \text{is $(\cM_1 \cap \cM_2)$-subharmonic on $\Omega$} \},
	$$
	and in particular for 
	$$
	\cA(\overline{\Omega}) := \{ a_{|\overline{\Omega}}: \ \text{$a$ is quadratic and $-a$ is $(\cM_1 \cap \cM_2)$-subharmonic in $\Omega$} \}.
	$$ \end{proof}

Putting together Lemma \ref{lem:SAT_reverse} and Lemma \ref{lem:SAT}, we have the following general result, whose proof is immediate. 

\begin{thm}[Characterizing dual cone subharmonics]\label{thm:DCSC} Suppose that $\cM \subset \cJ^2$ is a monotoncity cone subequation. Then the following hold.
	\begin{itemize}
		\item[(a)] If $\Omega \Subset X$ admits a $C^2$ strict $\cM$-subharmonic, then $
		\wt{\cM}(\overline{\Omega}) = S \cA(\overline{\Omega})$ 	where 
		\begin{equation}\label{define_A_thm}
		\cA(\overline{\Omega}) := \{ a_{|\overline{\Omega}}: \ \text{$a$ is quadratic and $-a$ is $\cM$-subharmonic in $\Omega$} \}. 
		\end{equation}
		Moreover if $\Omega$ also admits a $C^2$ strict $(\cM_1 \cap \cM_2)$-subharmonic,  one has
		\begin{equation}\label{IP1}
		\wt{\cM_1 \cap \cM_2}(\overline{\Omega}) = S(\cA_1 \cap \cA_2)(\overline{\Omega}),
		\end{equation} 
		for  pairs $\cM_1, \cM_2$ and $\cA_1, \cA_2$ as defined in \eqref{define_A_thm}.
		\item[(b)] Consequently, if each $\Omega \Subset X$ admits a $C^2$ strict $\cM$-subharmonic, then 
		$$
		\wt{\cM}(X) = S \cA(X)
		$$ 
		for $\cA = \{ \cA(\overline{\Omega})\}_{\Omega \Subset X}$ with $\cA(\overline{\Omega)}$ as in \eqref{define_A_thm}. Moreover, if each $\Omega \Subset X$ admits a $C^2$ strict $(\cM_1 \cap \cM_2)$-subharmonic, one has  
		\begin{equation}\label{IP2}
		\wt{\cM_1 \cap \cM_2}(X) = S(\cA_1 \cap \cA_2)(X),
		\end{equation}
		for  pairs $\cM_1, \cM_2$ and $\cA_1, \cA_2$ as defined in \eqref{define_A_thm}.
	\end{itemize}
\end{thm}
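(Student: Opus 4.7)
The plan is to deduce Theorem \ref{thm:DCSC} by simply combining the two inclusions established in Lemmas \ref{lem:SAT_reverse} and \ref{lem:SAT}, once we have matched the auxiliary classes $\cA(\overline{\Omega})$ in the two lemmas. The key observation is that both lemmas apply with exactly the same class \eqref{define_A_thm}, namely the quadratic polynomials $a$ whose negatives are $\cM$-subharmonic on $\Omega$: Lemma \ref{lem:SAT_reverse} was proved for precisely this class (via the Definitional Comparison Lemma applied to a quadratic $v=-a$), and Lemma \ref{lem:SAT}(b) is built on this same class as the canonical instance of \eqref{M_sub}.

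For part (a), I would argue as follows. Fix $\Omega \Subset X$ and $\cA(\overline{\Omega})$ as in \eqref{define_A_thm}. The reverse inclusion
\[
\SA(\overline{\Omega}) \subset \wt{\cM}(\overline{\Omega})
\]
follows from the second half of Lemma \ref{lem:SAT_reverse} (see \eqref{reverse_inclusions}), and requires no hypothesis on $\Omega$. The forward inclusion
\[
\wt{\cM}(\overline{\Omega}) \subset \SA(\overline{\Omega})
\]
is exactly the content of Lemma \ref{lem:SAT}(b), which uses the hypothesis that $\Omega$ admits a $C^2$ strict $\cM$-subharmonic (in order to have the (ZMP) for $\wt{\cM}$ on $\overline{\Omega}$ via Theorem \ref{thm:zmp}, combined with the subharmonic difference formula \eqref{SDF}). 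The intersection equality \eqref{IP1} is then obtained by applying the same two lemmas to the monotonicity cone $\cM_1 \cap \cM_2$: Lemma \ref{lem:SAT_reverse} (via \eqref{intersection_reverse}) gives $\mathrm{S}(\cA_1 \cap \cA_2)(\overline{\Omega}) \subset \wt{\cM_1 \cap \cM_2}(\overline{\Omega})$, while Lemma \ref{lem:SAT}(b) gives the reverse inclusion \eqref{forward_inclusions} under the assumed existence of a strict $(\cM_1 \cap \cM_2)$-subharmonic on $\Omega$.

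For part (b), I would simply globalize by unwinding the definitions. By Definition \ref{def:subA}, $u \in \SA(X)$ precisely means $u \in \USC(X)$ and $u_{|\overline{\Omega}} \in \SA(\overline{\Omega})$ for every $\Omega \Subset X$; similarly, being $\wt{\cM}$-subharmonic on $X$ is local, so $u \in \wt{\cM}(X)$ iff $u \in \USC(X)$ and $u_{|\Omega} \in \wt{\cM}(\Omega)$ for every $\Omega \Subset X$. Under the assumption that every $\Omega \Subset X$ admits a $C^2$ strict $\cM$-subharmonic, part (a) applied to each such $\Omega$ gives the equality fiberwise over $\Omega$'s, and the characterization $\wt{\cM}(X) = \SA(X)$ follows at once. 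The same argument, applied cone by cone, yields the intersection identity \eqref{IP2}.

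The proof is indeed immediate once the two lemmas are in hand; no further estimate is required. The only mild care point is making sure the classes $\cA_i(\overline{\Omega})$ for $\cM_i$ in \eqref{IP1}--\eqref{IP2} are read with their own cones (so that $\cA_1 \cap \cA_2$ coincides with the class of quadratics whose negatives are $(\cM_1 \cap \cM_2)$-subharmonic, which is exactly the identity $\cA_1(\overline{\Omega}) \cap \cA_2(\overline{\Omega}) = \{a : -a \in (\cM_1 \cap \cM_2)(\Omega)\}$ already used in the proof of Lemma \ref{lem:SAT_reverse}). Beyond that bookkeeping, there is no obstacle.
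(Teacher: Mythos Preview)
Your proposal is correct and follows exactly the paper's approach: the paper states that the proof is ``immediate'' from combining Lemma \ref{lem:SAT_reverse} (for the inclusion $\SA \subset \wt{\cM}$) with Lemma \ref{lem:SAT} (for the inclusion $\wt{\cM} \subset \SA$ under the strict-subharmonic hypothesis), together with their respective intersection properties, and then globalizing for part (b). Your write-up simply makes these immediate steps explicit, including the bookkeeping identity $\cA_1(\overline{\Omega}) \cap \cA_2(\overline{\Omega}) = \{a:\ -a \in (\cM_1 \cap \cM_2)(\Omega)\}$ already noted in the proof of Lemma \ref{lem:SAT_reverse}.
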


Before proceeding to examine additional examples, including a discussion of characterizing the $\wt{\cM}$-subahrmonics for $\cM = \cM(\gamma, \cD, R)$ in the fundamental family, we record the following observation.

\begin{remark}\label{rem:optimality} In Theorem \ref{thm:DCSC}, provided that $\cM$ admits a $C^2$ strict $\cM$-subharmonic, we have characterizations  $\wt{\cM}(\overline{\Omega}) = S \cA(\overline{\Omega})$ with $\cA(\overline{\Omega})$ some class of quadratic functions easily determined by $\cM$; those quadratics $a$ such that $-a$ is $\cM$-subharmonic. However, it is not said that the characterization is optimal since it is possible that 
	\begin{equation}\label{non_uniqueness_A}
	S \cA_1(\overline{\Omega}) = S \cA_2(\overline{\Omega}) \ \ \text{even with} \ \cA_1(\overline{\Omega}) \subsetneq \cA_2(\overline{\Omega}).
	\end{equation}
	For example, by applying Theorem \ref{thm:DCSC} to Example \ref{exe:SA} with $\cM = \cM(\cP)$ the theorem gives $\cA_2(\overline{\Omega})$ as those quadratics $a$ such that $-a$ is $\cM(\cP)$-subharmonic; that is $a$ a concave quadratic. On the other hand, we know that the characterization holds for $\cA_1(\overline{\Omega})$  chosen as affine functions. Obviously affine quadratics are also concave and are the ``minimal'' concave quadratics. In this pure second order case, one has the deep study of Harvey-Lawson \cite{HL19} involving {\em edge functions}. Such improvements in the general case would be interesting.
\end{remark}

We now complete the discussion by presenting the characterizations of $\wt{\cM}$-subharmonics for all monotonicity cone subequations $\cM$ that belong to the fundamental family of cones introduced in \cite{CHLP22}. The family was recalled and briefly discussed beginning with the definition in \eqref{fundamental_family1}-\eqref{fundamental_family2}:
\begin{equation}\label{FFM}
\call M = \call M(\gamma, \call D, R) \defeq \bigg\{ (r,p,A) \in \call J^2 :\ r \leq -\gamma|p|, \ p \in \call D, \ A \geq \frac{|p|}R I \bigg\},
\end{equation}
where with $\gamma \in [0,+\infty)$, $\call D \subset \R^n$ a directional cone (a closed convex cone with vertex at the origin and non-empty interior), and $R \in (0,+\infty]$. 

We recall that in the limiting case $R = + \infty$ we interpret the last inequality in \eqref{FFM} as  
$$
A \geq \frac{|p|}R I \ \ \Leftrightarrow \ \ A \geq 0 \ \text{in} \ \cS(n) \ \ \Leftrightarrow \ \ A \in \cP.
$$
We recall also that the family is fundamental in the sense that for each monotonicity cone subequation $\cM \subset \cJ^2$, there exists a member of the fundamental family $\cM(\gamma, \cD, R)$ such that $\cM(\gamma, \cD, R) \subset \cM$ and hence by duality for each $\Omega \Subset X$
$$
\wt{\cM}(\overline{\Omega }) \subset \wt{\cM}(\gamma, \cD, R)(\overline{\Omega }).
$$
Hence the characterizations of all $\wt{\cM}(\gamma, \cD, R)(\overline{\Omega })$ will say something about the general case of $\wt{\cM}(\overline{\Omega })$.

The fundamental family $\wt{\cM}(\gamma, \cD, R)$ is generated by five elementary cones by taking double and triple intersections of the five generators, which  are:
\begin{equation}\label{gen1}
\cM(\cP) := \R \times \R^n \times \cP = \{ (r,p,A) \in \cJ^2: \ A \geq 0 \};
\end{equation}
\begin{equation}\label{gen2}
\cM(\cN) := \cN \times \R^n \times \cS(n) = \{ (r,p,A) \in \cJ^2: \ r \leq 0 \};
\end{equation}
\begin{equation}\label{gen3}
\cM(\cD) := \R \times \cD \times \cS(n) = \{ (r,p,A) \in \cJ^2: \ p \in \cD \},  \quad  \cD \subsetneq \R^n;
\end{equation}
\begin{equation}\label{gen4}
\call M(\gamma) \defeq \{ (r,p,A) \in \cJ^2 :\ r \leq - \gamma |p| \}, \quad \gamma \in (0, +\infty),
\end{equation}
\begin{equation}\label{gen5}
\call M(R) \defeq \left\{ (r,p,A) \in \call J^2 :\ A \geq \frac{|p|}{R}I \right\}, \quad R \in (0, +\infty).
\end{equation}
Examples \ref{exe:SA} and \ref{exe:SZ} characterize $\cM(X)$ for the generators in \eqref{gen1} and \eqref{gen2} respectively, where we note that for these two cones, for each $\Omega \Subset \R^n$ there are $C^2$-strict $\cM$-subharmonics. By exploiting Theorem \ref{thm:DCSC} (including the intersection properties), it suffices to characterize $\wt{\cM}(\overline{\Omega})$ for $\cM$ for the remaining generating comes \eqref{gen3} - \eqref{gen5} and to check when there are $C^2$-strict $\cM$-subharmonics for these generators and all of the intersections of the generators.

The following corollary addresses the remaining generators.

\begin{cor}\label{cor:gen345} Let $X \subset \R^n$ be open.
	\begin{itemize}
		\item[(a)] For any directional cone $\cD \subsetneq \R^n$, the monotonicity cone $\cM(\cD)$ defined in \eqref{gen3} has as dual cone $
		\wt{\cM}(\cD) = \{ (r, p, A) \in \cJ^2: \ p \in \wt{\cD} = - (\intr \cD)^c) \}
		$
		and one has $\wt{\cM}(\cD)(X) = S \cA_{\cD}(X)$ where $\cA_{\cD} = \{ \cA_{\cD}(\overline{\Omega})\}_{\Omega \Subset X} $ with 
		\begin{equation}
		\cA_{\cD}(\overline{\Omega}) = \left\{ a|_{\bar\Omega} :\ a \ \text{quadratic},\  Da \in - \call D \ \text{on $\Omega$} \right\}, \ \ \Omega \Subset X.
		\end{equation}
		\item[(b)] For any $\gamma \in (0, + \infty)$, the monotonicity cone $\cM(\gamma)$ defined in \eqref{gen4} has as dual cone $
		\wt{\cM}(\gamma) = \{ (r, p, A) \in \cJ^2: \  r \leq  \gamma |p| \}
		$
		and one has $\wt{\cM}(\gamma)(X) = S \cA_{\gamma}(X)$ where $\cA_{\gamma} = \{ \cA_{\gamma}(\overline{\Omega})\}_{\Omega \Subset X} $ with 
		\begin{equation}\label{char4}
		\cA_{\gamma}(\overline{\Omega}) = \left\{ a|_{\bar\Omega} :\ a \ \text{quadratic},\ a \geq \gamma |Da| \ \text{on $\Omega$} \right\}, \ \ \Omega \Subset X.
		\end{equation}
		\item[(c)] For any $R \in (0, +\infty)$,  the monotonicity cone $\cM(R)$ defined in \eqref{gen5} has as dual cone $\wt{\cM}(R) = \left\{ (r, p, A) \in \cJ^2: \ A + \frac{|p|}{R}I \in \wt{\cP} \right\}$ and for any $\Omega$ contained in a ball of radius $R$ one has  $\wt{\cM}(R)(\overline{\Omega}) = S \cA_{R}(\overline{\Omega})$ where
		\begin{equation}\label{char5}
		\call A_{R}(\bar\Omega) \defeq \left\{ a|_{\bar\Omega} :\ a \ \text{quadratic},\ D^2a \leq - \frac{|Da|}R I \ \text{on} \ \Omega \right\}.
		\end{equation}
	\end{itemize}
\end{cor}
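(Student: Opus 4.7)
The strategy is to apply Theorem \ref{thm:DCSC} to each of the three generators. For each monotonicity cone $\cM$ in question, this requires three ingredients: (i) an explicit description of the dual cone $\wt{\cM}$; (ii) a verification that the class of quadratics $a$ with $-a$ being $\cM$-subharmonic on $\Omega$ matches the class stated in the corollary ($\cA_\cD$, $\cA_\gamma$, or $\cA_R$); and (iii) the existence of $C^2$ strictly $\cM$-subharmonic functions on the relevant domains.

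For step (i), the dual fibers are computed directly from $\wt{\cM}=(-\intr\cM)^c$. Since $\intr \cM(\cD)=\R\times \intr\cD\times\cS(n)$, negation in all three slots immediately gives $\wt{\cM}(\cD)=\R\times(-(\intr\cD)^c)\times\cS(n)$. For $\cM(\gamma)$ the interior is $\{r<-\gamma|p|\}$; writing a point of $-\intr \cM(\gamma)$ as $(-r,-p,-A)$ with $r<-\gamma|p|$ yields the claimed characterization $\wt{\cM}(\gamma)=\{r\leq\gamma|p|\}$. For $\cM(R)$, the condition $A>\frac{|p|}{R}I$ on an interior jet $(r,p,A)$ translates, after negation, to $-A<-\frac{|p|}{R}I$, so complementing gives $\wt{\cM}(R)=\{A+\frac{|p|}{R}I\in\wt\cP\}$.

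For step (ii), I simply read off the conditions $J^2_x(-a)\in\cM$ for a quadratic $a$: in case (a), $-Da(x)\in\cD$, i.e.\ $Da(x)\in -\cD$ on $\Omega$; in case (b), $-a(x)\leq-\gamma|-Da(x)|$, i.e.\ $a(x)\geq\gamma|Da(x)|$ on $\Omega$; in case (c), $-D^2a(x)\geq\frac{|Da(x)|}{R}I$, i.e.\ $D^2a(x)\leq-\frac{|Da(x)|}{R}I$ on $\Omega$. These are exactly the defining conditions of $\cA_\cD(\bar\Omega)$, $\cA_\gamma(\bar\Omega)$, $\cA_R(\bar\Omega)$.

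The substantive step is (iii), constructing strict approximators. For $\cM(\cD)$, pick any $\nu\in\intr\cD$ and set $\psi(x)\defeq\pair{\nu}{x}$: its $2$-jet is $(\pair{\nu}{x},\nu,0)\in\R\times\intr\cD\times\cS(n)=\intr\cM(\cD)$, so $\psi$ works on every bounded $\Omega\ssubset\R^n$. For $\cM(\gamma)$, any negative constant $\psi\equiv c<0$ has $2$-jet $(c,0,0)$ satisfying $c<0=-\gamma|0|$, hence lies in $\intr\cM(\gamma)$ on every bounded $\Omega$. For $\cM(R)$ the construction is $\psi(x)\defeq\tfrac12|x-x_0|^2$ for some fixed center $x_0$: its $2$-jet is $(\tfrac12|x-x_0|^2,\,x-x_0,\,I)$, and membership in $\intr\cM(R)$ requires $I>\frac{|x-x_0|}{R}I$, equivalently $|x-x_0|<R$. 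This forces the geometric restriction that $\Omega$ lie in an open ball of radius $R$, which is precisely the hypothesis of part (c); this is the main (and really the only) obstacle, and it explains why parts (a) and (b) give characterizations on all of $X$ while (c) is stated on $\bar\Omega$. With (i)--(iii) established, parts (a) and (b) follow from Theorem~\ref{thm:DCSC}(b) applied to the exhaustion by $\Omega\ssubset X$, and part (c) follows from Theorem~\ref{thm:DCSC}(a) applied to the given $\Omega$.
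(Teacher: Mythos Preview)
Your proof is correct and follows essentially the same approach as the paper: apply Theorem~\ref{thm:DCSC} after checking that each $\cA$ coincides with the class of quadratics whose negative is $\cM$-subharmonic, and that the relevant domains admit strict $\cM$-subharmonics. The paper's proof is terser, citing \cite{CHLP22} for the existence of strict approximators and leaving the dual cone formulas and the verification of $-\cA(\bar\Omega)\subset\cM(\Omega)$ to the reader, whereas you supply explicit constructions of the approximators and spell out the dual computations; the content is the same.
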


\begin{proof}
	As shown in Chapter 6 of \cite{CHLP22}, one can find quadratic functions $\psi$ which are strictly $\cM$-subharmonic on all $\Omega \Subset \R^n$ for the cones $\cM(\cD)$ and $\cM(\gamma)$ and on all $\Omega$ contained in a ball of radius $R$ for the cone $\cM(R)$. Moreover, it is also shown that the (ZMP) fails for $\wt{\cM}(R)$-harmonics on balls of radius $R' > R$. Using Theorem \ref{thm:DCSC}, it suffices only to check that for each $\Omega \Subset X$ one has
	$$
	- \cA_{\cD}(\overline{\Omega}) \subset \cM(\cD)(\Omega), \quad - \cA_{\gamma}(\overline{\Omega}) \subset \cM(\gamma)(\Omega) \quad \text{and} \quad - \cA_{R}(\overline{\Omega}) \subset \cM(R)(\Omega),
	$$
	which are easily verified.
\end{proof}

\begin{remark}\label{rem:minimal_A} As noted in \eqref{non_uniqueness_A} of Remark \ref{rem:optimality},  it is not said that the classes of functions $\cA_{\cD}(\overline{\Omega}), \cA_{\gamma}(\overline{\Omega})$ and $\cA_R(\overline{\Omega})$ are the minimal classes for which the characterizations of Corollary \ref{cor:gen345} hold. For example, for each $R \in (0, +\infty)$, one can replace $\cA_R(\overline{\Omega})$ with
	\begin{equation}\label{A_R_minimal}
	\cA_{R, \min}(\overline{\Omega}) := \left\{ a|_{\bar\Omega} :\ a \ \text{quadratic},\ D^2a = - \frac{\max_{\overline{\Omega}} |Da|}{R} I \ \text{on} \ \Omega \right\}.
	\end{equation}
	Clearly $\cA_{R, \min}(\overline{\Omega})  \subsetneq \cA_{R}(\overline{\Omega})$ and the quadratics in $\cA_{R, \min}(\overline{\Omega})$ are ``minimal'' in the sense that they are the most ``concave'' quadratics in $\cA_{R}(\overline{\Omega})$. Also notice that taking the limit $R \to +\infty$ of  $\cA_{R, \min}(\overline{\Omega})$ yields the affine functions used to characterize the dual subharmonics for the limiting cone $\cM(\cP)$ of Example \ref{exe:SA}.
\end{remark}

The remaining twelve (distinct) cones $\cM(\gamma, \cD, R))$ in the fundamental family are formed by taking double and triple intersections of the five generators \eqref{gen1}-\eqref{gen5}. Seven of the intersections
$$
\cM(\cN, \cP), \cM(\cN, \cD), \cM(\cD, \cP), \cM(\gamma, \cP), \cM(\gamma, \cD), \cM(\gamma, \cP), \cM(\cN, \cD, \cP) \  \text{and} \ \cM(\gamma, \cD, \cP)
$$
do not use \eqref{gen5} $\cM(R)$ (with $R$ finite) admit quadratic strictly $\cM$-subharmonics all every $\Omega \Subset X$ and hence the intersection property \eqref{IP2} gives characterizations of the form $\wt{\cM}(X) = S \cA(X)$ with $\cA$ the corresponding intersections. Example \ref{exe:SAP} concerns $\cM(\cN, \cP)$ and another example of this type is worth recording, while the others are left to the reader.

\begin{example}[Subaffine-plus functions with directionality]\label{exe:SAPD} The fundamental product monotonicity cone $\cM = \cM(\cN, \cD, \cP) = \cM(\cN) \cap \cM(\cD) \cap \cM(\cP) =  \cN \times \cD \times \cP$, with directionality cone $\cD \subsetneq \R^n$
	has dual cone $\wt{\cM} = \cN \times \wt{\cD} \times \wt{\cP}$ and satisfies $\wt{\cM}(X) = S \cA(X)$ where $\cA = \{ \cA(\overline{\Omega})\}_{\Omega \Subset X}$ with
	\begin{equation}\label{SAPD}
	\call A(\bar\Omega) = \Aff_{\call D}^+(\bar\Omega) \defeq \big\{ a|_{\bar\Omega} :\ a \text{ affine},\ a|_{\bar\Omega} \geq 0 \ \text{and} \ Da \in - \intr \call D \ \text{on} \ \Omega \big\}.
	\end{equation}
	We will call ${\rm SA}_{\call D}^+(X)$ the space of \emph{subaffine-plus functions with directionality $\call D$}, or more simply \emph{$\call D$-subaffine-plus functions}.
\end{example}

Finally, as shown in Chapter 6 of \cite{CHLP22}, the remaining five cones
\begin{equation}\label{R_cones}
\cM(\cN, R), \cM(\gamma, R), \cM(\cD, R), \cM(\cN, \cD, R) \  \text{and} \ \cM(\gamma, \cD, R)
\end{equation}
which use  \eqref{gen5} $\cM(R)$ (with $R$ finite), admit quadratic strictly $\cM$-subharmonics on every domain $\Omega \Subset \R^n$ such that 
$$
\mbox{$\Omega$ is contained in a ball of radius $R$}
$$
in the first two cases of \eqref{R_cones} and on every domain $\Omega \Subset \R^n$ such that 
$$
\mbox{$\Omega$ is contained in a translate of the truncated cone $\cD_R:= \cD  \cap B_R(0)$}
$$
in the last three cases of \eqref{R_cones} with a directional cone $\cD \subsetneq \R^n$.

\section{Admissibility constraints and the Correspondence Principle}\label{sec:correspondence}

In this section, we will discuss how the potential theoretic comparison principles in nonlinear potential theory (using monotonicity, duality and fiberegularity) developed in the previous sections can be transported to many fully nonlinear second order PDEs. The equations we treat will be defined by a variable coefficient operator $F \in C(\cG)$ with domain $\cG \subset \cJ^2(X)$ which may or may not be all of $\cJ^2(X)$. Moreover, we will treat operators $F$ with dependence on all jet variables $J = (r,p,A) \in \call J^2$ with sufficient monotonicity with respect to some constant coefficient monotonicity cone subequation $\call M$. Hence, the operators will be {\bf {\em proper elliptic}} with an additional monotonicity in the gradient variables, a concept that we will call {\bf {\em directionality}}.  It is gradient dependence with directionality that distinguishes the present work with respect 
 to the pure second order and gradient free situations treated in \cite{CP17} and \cite{CP21} respectively.

\subsection{Viscosity solutions of PDEs with admissibility constraints}\label{sec:AVS}

We begin by recalling the class of operators with the necessary monotonicity required for the comparison principle. When there is gradient dependence, the additional monotonicity of directionality will also be required (see Definition \ref{defn:MMO}).

\begin{defn}[Proper elliptic operators]\label{defn:PEO} An operator $F \in C(\cG)$ where either
	\begin{equation*}\label{case1}
	\mbox{$\cG = \cJ^2(X)$ } \quad \text{({\bf {\em unconstrained case}})}
	\end{equation*}
	or
	\begin{equation*}\label{case2}
	\mbox{$\cG \subsetneq  \cJ^2(X)$ is a subequation constraint set  \quad \text{({{\bf \em constrained case}})}.}
	\end{equation*}
	is said to be {\em proper elliptic} if for each $x \in X$ and each $(r,p, A) \in \cG_x$ one has
	\begin{equation}\label{PEO}
	F(x,r,p,A) \leq F(x,r + s, p, A + P) \ \quad \forall \, s \leq 0 \ \text{in} \ \R \ \text{and} \  \forall \, P \geq 0 \ \text{in} \ \cS(n).
	\end{equation}
	The pair $(F, \cG)$ will be called a {\em proper elliptic \footnote{Such operators are often refered to as {\em proper} operators (starting from \cite{CIL92}). We prefer to maintain the term  ``elliptic'' to emphasize the importance of the {\em degenerate ellipticity} ($\cP$-monotonicity in $A$) in the theory.}
		(operator-subequation) pair}.
\end{defn}

The minimal monotonicity \eqref{PEO} of the operator $F$ parallels the minimal monotonicity properties (P) and (N) for subequations $\cF$. It is needed for {\em coherence} and eliminates obvious counterexamples for comparison. This is explained for subequations after Definition \ref{defn:Fsub}. A given operator $F$ must often be restricted to a suitable background constraint domain $\cG \subset \cJ^2(X)$ in order to have this minimal monotonicity (the constrained case). The historical example clarifying the need for imposing a constraint is the Monge-Amp\`ere operator
\begin{equation}\label{MAE}
F(D^2u) = {\rm det}(D^2u),
\end{equation}
where one restricts the operator's domain to be the convexity subequation $\cG = \cP := \{ A \in \cS(n): A \geq 0 \}$.

\begin{remark}\label{rem:CC_UC}
The scope of the constrained case is perhaps best illustrated by the more general {\em G\aa rding-Dirichlet operators} as discussed in Section 11.6 of \cite{CHLP22}, of which the Monge-Amp\`ere equation \eqref{MAE} represents the fundamental case. This class of operators are constructed in terms of {\em hyperbolic polynomials} in the sense G\aa rding (see Definition \ref{defn:hyp_poly}).  The unconstrained case, in which $F$ is proper elliptic on all of $\cJ^2(X)$ is the case usually treated in the literature and is perhaps best illustrated by the so-called {\em canonical operators} 
 associated to subequations with sufficient monotonicity, as discussed in Section 11.4 of \cite{CHLP22}. 
\end{remark}

We now recall the precise notion of subsolutions, supersolutions and solutions of a PDE
\begin{equation}\label{FNE} 
	F(J^2u) = 0 \ \ \text{on} \ X \subset \R^n.
\end{equation}
The notions again make use of {\em upper/lower test jets} which we recall are defined by
 \begin{equation}\label{UCJ2}
J^{2,+}_{x} u := \{ J^2_{x} \varphi:  \varphi \ \text{is} \ C^2 \ \text{near} \ x, \  u \leq \varphi \ \text{near} \  x \ \text{with equality at} \ x \},
\end{equation}
and 
 \begin{equation}\label{LCJ}
J^{2,-}_{x} u := \{ J^2_{x} \varphi:  \varphi \ \text{is} \ C^2 \ \text{near} \ x, \  u \geq \varphi \ \text{near} \  x \ \text{with equality at} \ x \},
\end{equation}

\begin{defn}[Admissible viscosity solutions]\label{defn:AVS} Given $F \in C(\cG)$ with either $\cG = \cJ^2(X)$ or $\cG \subsetneq \cJ^2(X)$ a subequation on an open subset $X \subset \R^n$:
	\begin{itemize}
		\item[(a)] a function $u \in \USC(X)$ is said to be a {\em ($\cG$-admissible) viscosity subsolution of $F(J^2u)=0$ on $X$} if for every $x \in X$ one has
		\begin{equation}\label{AVSub}
		\mbox{$J \in J^{2, +}_{x}u \ \ \Rightarrow \ \   J \in \cG_x$ \ \ \text{and} \ \ $F(x, J) \geq 0$;}
		\end{equation}
		\item[(b)] a function $u \in \LSC(\Omega)$ is said to be a {\em ($\cG$-admissible) viscosity supersolution  of $F(J^2u)=0$ on $X$} if for every $x \in X$ one has
		\begin{equation}\label{AVSuper}
		\mbox{$J \in J^{2, -}_{x}u  \ \ \Rightarrow$ \ \ either [ $J \in \cG_x$ and \ $F(x,J) \leq 0$\, ] \ or \ $J \not\in \cG_x$.}
		\end{equation}
	\end{itemize}
	A function $u \in C(X)$ is an {\em ($\cG$-admissible viscosity) solution of $F(J^2u)=0$ on $X$} if both (a) and (b) hold.
\end{defn}
In the unconstrained case where $\cG = \cJ^2(X)$, the definitions are standard. In the constrained case where $\cG \subsetneq  \cJ^2(X)$, the definitions give a systematic way of doing of what is sometimes done in an ad-hoc way (see \cite{IL90} for operators of Monge-Amp\`{e}re type and \cite{Tr90} for prescribed curvature equations.) Note that \eqref{AVSub} says that the subsolution $u$ is also $\cG$-subharmonic and that \eqref{AVSuper} is equivalent to saying that $F(x,J) \leq 0$ for the lower test jets which lie in the constraint $\cG_x$.

If $\cG$ is fiberwise constant, that is, 
\[
\cG_x = \mathcal E \quad \forall x \in X,
\]
for some $\mathcal E \subset \R \times \R^n \times \cS(n)$, then $\cG$-admissible viscosity sub/supersolutions will be for simplicity referred as $\mathcal E$-admissible viscosity sub/supersolutions.

\subsection{The Correspondence Principle} A crucial point in a nonlinear potential theoretic approach to study fully nonlinear PDEs is to establish the {\bf {\em Correspondence Principle}} between a given proper elliptic operator-subequation pair $(F, \cG)$ and a given subequation $\cF$. This correspondence consists of the two equivalences: for every $u \in \USC(X)$
\begin{equation}\label{Corr1}
u \ \text{is $\cF$-subharmonic on $X$} \ \Leftrightarrow u \ \text{is a subsolution of} \ F(J^2u) = 0 \ \text{on $X$} 
\end{equation}
and
\begin{equation}\label{Corr2}
u \ \text{is $\cF$-superharmonic on $X$} \ \text{on $X$} \ \Leftrightarrow  \ u \ \text{is a supersolution of} \ F(J^2u) = 0,
\end{equation}
where the subsolutions/supersolutions are in the $\cG$-admissible viscosity sense of Defintion \ref{defn:AVS}. By the definitions, the equivalence \eqref{Corr1} is the same as the following equivalence: for each $x \in X$ one has
\begin{equation}\label{Corr1'}
J^{2,+}_x u \subset \cF_x \ \Longleftrightarrow \ \text{both} \quad J^{2,+}_x u \subset \cG_x \quad \text{and} \quad  F(x,J) \geq 0 \ \ \text{for each} \ \ J \in J^{2,+}_x u.
\end{equation}
This holds if and only if one has the {\bf{\em correspondence relation}}
\begin{equation}\label{relation}
\cF = \{ (x,J) \in \cG: \ F(x,J) \geq 0 \}.
\end{equation}
In addition, the equivalence \eqref{Corr2} is the same as the following equivalence: for each $x \in X$ one has
\begin{equation}\label{Corr2'}
J^{2,+}_x (-u) \subset \wt{\cF}_x \ \Longleftrightarrow \  \ J \not\in \cG_x \ \text{or} \ [J \in \cG_x \ \text{and} \ F(x,J) \leq 0], \ \forall \,  J \in J^{2,-}_x u.
\end{equation}
Using duality \eqref{dual_fiber} and $J^{2,+}_x (-u) = -J^{2,-}_x u$ one can see that that the equivalence \eqref{Corr2'} holds if and only if one has {\bf {\em compatibility}}
\begin{equation}\label{compatibility1}
\intr \cF = \{ (x,J) \in \cG: \ F(x,J) > 0\},
\end{equation} 
which for subequations $\cF$ defined by \eqref{relation} is equivalent to
\begin{equation}\label{compatibility2}
\partial  \cF = \{ (x,J) \in \cG: \ F(x,J) = 0\}.
\end{equation}

These considerations can be summarized in the following result.

\begin{thm}[Correspondence Principle]\label{thm:corresp_gen}
	Suppose that $F \in C(\cG)$ is proper elliptic and $\cF$, defined by the correspondence relation \eqref{relation}, is a subequation. If compatibility \eqref{compatibility1} is satisfied, then the correspondence principle \eqref{Corr1} and \eqref{Corr2} holds. In particular, $u \in C(X)$ is a $\cG$-admissible viscosity solution of $F(J^2u) = 0$ in $X$ if and only if $u$ is $\cF$-harmonic in $X$.
\end{thm}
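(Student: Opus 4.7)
The plan is to verify the two equivalences \eqref{Corr1} and \eqref{Corr2} directly from the definitions, using the correspondence relation for the subsolution half and combining duality with the compatibility hypothesis for the supersolution half; the characterization of harmonics then follows by combining both.

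First I would handle the subharmonic-subsolution equivalence by unwinding definitions. Given $u \in \USC(X)$, the condition of being $\cF$-subharmonic is $J^{2,+}_x u \subset \cF_x$ for every $x \in X$. Substituting the correspondence relation $\cF_x = \{J \in \cJ^2 : J \in \cG_x \text{ and } F(x,J) \geq 0\}$ turns this into: for every $x \in X$ and every $J \in J^{2,+}_x u$, one has $J \in \cG_x$ and $F(x,J) \geq 0$. This is precisely Definition~\ref{defn:AVS}(a). No additional hypothesis beyond the correspondence relation is needed here, so \eqref{Corr1} is immediate.

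Next, for the superharmonic-supersolution equivalence, I would invoke the fundamental duality reformulation \eqref{Vsuper3}: $u$ is $\cF$-superharmonic on $X$ if and only if $-u$ is $\wt{\cF}$-subharmonic on $X$, i.e., $J^{2,+}_x(-u) \subset \wt{\cF}_x$ for every $x \in X$. Using the elementary identity $J^{2,+}_x(-u) = -J^{2,-}_x u$ together with the fiberwise dual formula $\wt{\cF}_x = (-\intr \cF_x)^c$ from \eqref{dual_fiber} (which is available since $\cF$ is assumed to be a subequation, so property (T) holds), this condition becomes: for every $K \in J^{2,-}_x u$ one has $K \notin \intr \cF_x$. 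At this point the compatibility hypothesis \eqref{compatibility1} enters: it gives $\intr \cF_x = \{J \in \cG_x : F(x,J) > 0\}$, whose complement in $\cJ^2$ is exactly
\[
(\cJ^2 \setminus \cG_x) \;\cup\; \{J \in \cG_x : F(x,J) \leq 0\}.
\]
That is, $K \notin \intr \cF_x$ holds iff either $K \notin \cG_x$, or $K \in \cG_x$ with $F(x,K) \leq 0$, which is precisely the supersolution condition of Definition~\ref{defn:AVS}(b). This yields \eqref{Corr2}, and the final harmonic-solution statement follows by intersecting the two equivalences for continuous $u$.

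The main subtlety, really the only place the proof is not pure definition-chasing, lies in the supersolution step and is exactly why the compatibility condition is imposed. Without \eqref{compatibility1} one only has the trivial inclusion $\intr \cF_x \subset \{J \in \cG_x : F(x,J) > 0\}$, and the boundary portion $\{J \in \cG_x : F(x,J) = 0\} \setminus \partial \cF_x$ would be misclassified when translating between $\wt{\cF}$-subharmonicity and $F$-supersolvability. The compatibility condition, equivalent under the correspondence relation to $\partial \cF_x = \{J \in \cG_x : F(x,J) = 0\}$, is precisely what synchronizes the viscosity and potential-theoretic notions on the zero set of $F$.
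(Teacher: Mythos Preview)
Your proof is correct and follows essentially the same route as the paper: the paper's argument is the discussion immediately preceding the theorem (equations \eqref{Corr1'} through \eqref{compatibility2}), which the theorem simply summarizes, and you have unpacked exactly those steps---the correspondence relation for \eqref{Corr1}, and duality via \eqref{dual_fiber} together with $J^{2,+}_x(-u) = -J^{2,-}_x u$ and compatibility for \eqref{Corr2}. Your closing paragraph on why compatibility is the genuine content is a nice addition but not a departure from the paper's reasoning.
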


It remains to determine structural conditions on a given proper elliptic operator $F \in C(\cG)$ for which the hypotheses of the Correspondence Principle hold. There are the two requirements. First, one needs that the constraint set $\cF$ defined by the correspondence relation \eqref{relation} is, in fact, a subequation. The fiberwise monotoncity properties (P) and (N) for $\cF$ follow easily from the $\cM_0$-monotonicity of the proper elliptic pair $(F, \cG)$. More delicate is the topological property (T) and this will require additional monotonicity and regularity assumptions on the pair $(F, \cG)$. Also, in order to discuss the equation $F(J^2u) = 0$ on $X$ the following {\em non-empty} condition on the zero locus of $F$ is needed
\begin{equation}\label{GammaNE}
\mbox{$\Gamma(x) \defeq \big\{ J \in \call \cG_x:  \ F(x,J) = 0 \big\} \neq \emptyset$ \quad for each $x \in X$.}
\end{equation}
This assumption also insures that $\cF_x \neq \emptyset$ for each $x \in X$. Second, one needs the compatibility \eqref{compatibility1} (or equivalently \eqref{compatibility2} if $\cF$ is a subequation). 
This condition is usually easy to check in practice, where some strict monotonicity of $F$ near the zero locus of $F$ suffices.

We now address the question of sufficient conditions for having the first requirement of the Correspondence Principle for a given proper elliptic operator $F \in C(\cG)$; that is, under what (additional) conditions on the pair $(F, \cG)$ will the constraint set $\cF$ defined by the correspondence relation \eqref{relation} be a subequation? We will, in fact, do more. We will find conditions for which the constraint set $\cF$ is a fiberegular $\cM$-monotone subequation for some monotonicty cone subequation $\cM$ of the pair $(F, \cG)$. This will make the Correspondence Principle useful for proving comparison. To that end, we must impose the appropriate (additional) monotonicity on the operator-subequation pair $(F, \cG)$.

\begin{defn}[$\cM$-monotone operators]\label{defn:MMO} Let $\cM \subset \cJ^2$ be a (constant coefficient) monotonicty cone subequation and let $\cG \subset \cJ^2(X)$ be either $\cG = \cJ^2(X)$ or $\cG \subsetneq  \cJ^2(X)$ an $\cM$-monotone subequation. An operator $F \in C(\cG)$ is said to be {\em $\cM$-monotone} if 
	\begin{equation}\label{MMO}
	F(x,J + J') \geq F(x,J) \ \quad \forall \, x \in X, J \in \cG_x, J' \in \cM. 
	\end{equation}
	The pair $(F, \cG)$ will be called an {\em $\cM$-monotone (operator-subequation) pair}.
\end{defn}
Notice that all $\cM$-monotone operators are proper elliptic since any subequation cone $\cM \subset \cJ^2$ cone contains the minimal monotonicity cone $\cM_0 = \cN \times \{ 0\} \times \cP$; therefore, \eqref{MMO} implies \eqref{PEO}. Also note that in the gradient free case, any proper elliptic operator is $\cM$-monotone for the monotonicity subequation cone $\cM:= \cN \times \R^n \times \cP$. This is the case treated in \cite{CP21}.

Given an $\cM$-monotone operator $F \in C(\cG)$, the fiber map $\Theta$ of the constraint set $\cF$ defined by the compatibility relation \eqref{relation} will be $\cM$-monotone in the following sense.

\begin{defn}[$\cM$-monotone maps]\label{defn:MMM}
	Given a monotonicity cone subequation $\cM \subset \cJ^2$, a map $\Theta: X \to \scK(\cJ^2)$ (taking values in the closed subsets of $\cJ^2$) will be called an {\em $\cM$-monotone map} if
	\begin{equation}\label{MMMap}
	\Theta(x) + \cM \subset \Theta(x), \ \ \forall \, x \in X.
	\end{equation}
\end{defn}
Indeed, $\Theta(x) := \{J \in \cG_x: \ F(x,J) \geq 0 \}$ is closed by the continuity of $F$ and \eqref{MMMap} follows easily from \eqref{MMO} and the $\cM$-monotoncity of $\cG$. 

\begin{remark}\label{rem:MMM} Notice that if $\cF$ is an $\cM$-monotone subequation on $X$, then the fiber map defined $\Theta(x):= \cF_x$ for each $x \in X$ will be an $\cM$-monotone map in the sense of Definition \ref{defn:MMM}. However, this definition does not assume that $\Theta$ is the fiber map of an $\cM$-monotone subequation. Sufficient conditions which ensure that it is will be given in Theorem \ref{thm:MMS} below.
	\end{remark}

Now, under a mild {\bf {\em regularity condition}} on an $\cM$-monotone operator $F \in C(\cG)$ (with  $\cG$ fiberegular in the constrained case), the fiber map of the constraint set $\cF$ defined by the compatibility relation \eqref{relation} will be continuous.

\begin{thm}[Continuous $\call M$-monotone maps]\label{thm:CMM} Let $F \in C(\cG)$ be an $\cM$-monotone operator with either $\cG = \cJ^2(X)$ or $\cG \subsetneq \cJ^2(X)$ a fiberegular ($\cM$-monotone) subequation. Assume that the pair $(F, \cG)$ satisfies the following regularity condition: for some fixed $J_0 \in \intr \call M$, given $\Omega \ssubset X$ and  $\eta > 0$, there exists $\delta= \delta(\eta, \Omega) > 0$ such that 
	\begin{equation}\label{UCF}
	F(y, J + \eta J_0) \geq F(x, J) \quad \forall J \in \cG_x,\ \forall x,y \in \Omega \ \text{with}\ |x - y| < \delta.
	\end{equation}
	Then the $\cM$-monotone map $\Theta \colon X \to \scK(\call J^2)$ defined by
	\begin{equation}\label{Theta_def}
	\Theta(x) := \big\{ J \in \cG_x: \ F(x,J) \geq 0 \big\}
	\end{equation}
	is continuous.
\end{thm}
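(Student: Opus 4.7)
The plan is to verify the formulation (c) of fiberegularity from Proposition \ref{unifcontequiv} for the map $\Theta$ and then invoke the equivalences in that proposition to conclude Hausdorff continuity. First I would record that $\Theta$ takes values in closed subsets of $\cJ^2$: each $\Theta(x) = \cG_x \cap \{J \in \cJ^2 : F(x,J) \geq 0\}$ is the intersection of two closed sets. Moreover, $\Theta$ is $\cM$-monotone in the sense of Definition \ref{defn:MMM}: for $J \in \Theta(x)$ and $J' \in \cM$, the $\cM$-monotonicity of $\cG$ (trivial in the unconstrained case) gives $J + J' \in \cG_x$, while the $\cM$-monotonicity \eqref{MMO} of $F$ gives $F(x, J+J') \geq F(x,J) \geq 0$, so $J + J' \in \Theta(x)$.

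Next I would establish the following uniform inclusion property for the fixed $J_0 \in \intr \cM$ coming from \eqref{UCF}: given $\Omega \ssubset X$ and $\eta > 0$, there exists $\delta > 0$ such that $\Theta(x) + \eta J_0 \subset \Theta(y)$ whenever $x, y \in \Omega$ with $|x-y| < \delta$. Given $J \in \Theta(x)$, I need to certify that $J + \eta J_0$ lies both in $\cG_y$ and in the superlevel set $\{F(y, \cdot) \geq 0\}$. The second conclusion is precisely the content of \eqref{UCF}, which supplies some $\delta_F > 0$. For the first, the unconstrained case $\cG = \cJ^2(X)$ is automatic, while in the constrained case I apply the formulation (c) of Proposition \ref{unifcontequiv} to the fiberegular $\cM$-monotone subequation $\cG$, obtaining $\delta_\cG > 0$ such that $\cG_x + \eta J_0 \subset \cG_y$ whenever $|x-y| < \delta_\cG$. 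Taking $\delta := \min\{\delta_F, \delta_\cG\}$ then gives the required inclusion.

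Finally, the proof concludes by invoking Proposition \ref{unifcontequiv}: since formulation (c) has just been verified for $\Theta$, the equivalence with (a) yields the local uniform Hausdorff continuity of $\Theta$, and hence its continuity on $X$. The one point that warrants care, and which I anticipate as the main (minor) obstacle, is that Proposition \ref{unifcontequiv} is stated for the fiber map of an $\cM$-monotone subequation $\cF$, whereas here $\Theta$ is a priori only an $\cM$-monotone map in the sense of Definition \ref{defn:MMM}; the fact that the constraint set \eqref{relation} actually is a subequation is not yet available at this stage (that is the content of the forthcoming Theorem \ref{thm:MMS}). However, an inspection of the proof of Proposition \ref{unifcontequiv} shows that only the $\cM$-monotonicity of the fibers is used in the chain of implications, not the subequation axioms (P), (T), (N), so the argument transfers verbatim to the present setting.
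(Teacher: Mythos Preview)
Your proof is correct and follows essentially the same route as the paper: verify condition (c) of Proposition~\ref{unifcontequiv} for $\Theta$ by combining the fiberegularity of $\cG$ (which handles the inclusion $J+\eta J_0 \in \cG_y$) with the regularity condition \eqref{UCF} on $F$ (which handles $F(y,J+\eta J_0)\geq 0$), taking the minimum of the two resulting $\delta$'s. Your explicit remark that Proposition~\ref{unifcontequiv} applies to $\Theta$ even before it is known to be a subequation fiber map---because the proof of that proposition uses only $\cM$-monotonicity of the fibers---is a point the paper addresses only afterward, in Remark~\ref{rem:maps}.
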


\begin{proof} 
	We will show that $\Theta$ is locally uniformly continuous. Since $\Theta$ is $\call M$-monotone, by \Cref{unifcontequiv} with fixed $J_0 \in \intr \call M$, it suffices to show that
	for every choice of $\Omega \ssubset X$ and $\eta > 0$ there exists $\delta_{\Theta}= \delta_{\Theta}(\eta, \Omega) > 0$ such that for each $x,y \in \Omega$
	\begin{equation}\label{LUC_Theta}
	\mbox{$|x -y| < \delta_{\Theta} \quad \implies \quad \Theta(x) + \eta J_0 \subset \Theta(y)$.}
	\end{equation}
	In the constrained case, where $\cG \subsetneq \cJ^2$ is a fiberegular $\cM$-monotone subequation, we have the validity of \eqref{LUC_Theta} with the fiber map $\Phi$ of $\cG$ in place of $\Theta$ for some $\delta_{\Phi} = \delta_{\Phi}(\eta, \Omega)$. It suffices to choose $\delta_{\Theta} = \min \{ \delta_{\Phi}, \delta \}$. Indeed, for each pair $x,y \in \Omega$ with $|x-y| < \delta_{\Theta}$, pick an arbitrary $J \in \Theta(x)$ so that $J \in \Phi(x)$ and $F(x,J) \geq 0$, which by the continuity of $\Phi$ and the regularity property \eqref{UCF} yields
	\begin{equation}\label{CT1}
	J + \eta J_0 \in \Phi(y) \quad \text{and} \quad F(y, J + \eta J_0) \geq F(x, J)  \geq 0,
	\end{equation}
	which yields the inclusion in \eqref{LUC_Theta}. 
	
	In the unconstrained case, where $\cG = \cJ^2(X)$, the constant fiber map $\Phi \equiv \call J^2$ is trivially continuous (\eqref{LUC_Theta} for $\Phi$ holds for every $\delta_{\Phi} > 0$) and hence it suffices to choose $\delta_{\Theta} = \delta$ and use the regularity condition \eqref{UCF}.
\end{proof}

\begin{remark}\label{rem:cont0} In Theorem \ref{thm:CMM}, the structural condition \eqref{UCF} on $F$ is merely sufficient to ensure that an $\call M$-monotone map $\Theta$ given by \eqref{Theta_def} is continuous. 
	The (locally uniform) continuity of $\Theta$ is equivalent to the statement that: for any fixed $J_0 \in \intr \call M$, given  $\Omega \ssubset X, $ and $\eta > 0$, there exists $\delta= \delta(\eta, \Omega) > 0$ such that $\forall x,y \in \Omega \ \text{with}\ |x - y| < \delta$ one has
	\begin{equation}\label{UCF_defn}
	F(x,J) \geq 0  \quad \text{and} \quad J \in \cG_x \quad \implies \quad F(y, J + \eta J_0) \geq 0.
	\end{equation}
	This condition is weaker, in general, than the structural condition \eqref{UCF} and hence useful to keep in mind for specific applications (see, for example, the proof of \cite[Theorem~5.11]{CP21} in a pure second order example). 
	On the other hand, the structural condition \eqref{UCF} can be more easily compared to other structural conditions on $F$ present in the literature.	
\end{remark}

\begin{remark}\label{rem:maps}
Notice that Theorem \ref{thm:CMM} is really a result about continuous $\cM$-monotone maps. In particular, we are not making use of the topological property (T) of $\cG$. In fact, one could state a version of the theorem where $\Phi$ is merely a continuous $\cM$-monotone map such that 
the $F \in C(\Phi(X))$ is $\cM$-monotone in the sense that
\begin{equation}\label{MMO2}
F(x,J + J') \geq F(x,J) \ \quad \forall \, x \in X, J \in \Phi(x), J' \in M. 
\end{equation}
The conclusion is that $\Theta: X \to \scK(X)$ defined by 
$$
	\Theta(x):= \{ J \in \Phi(x): \ F(x,J) \geq 0 \}
$$
is continuous. An approach of focusing merely on a background fiber map $\Phi$ (and not a background subequation $\cG$) was followed in the pure second order and gradient free cases in \cite{CP17} and \cite{CP21}. 
\end{remark}

Finally, making use of property (T) for a background subequation $\cG$ and natural non-degeneracy conditions, we have the following result. 

\begin{thm}[Fiberegular $\cM$-monotone subequations from $\cM$-monotone operators]\label{thm:MMS} Let $(F, \cG)$ be an $\call M$-monotone pair with $\cG$ fiberegular and $F$ which satisfies the regularity condition \eqref{UCF}. Then the constraint set $\cF$ defined by the correspondence relation \eqref{relation}; that is,
	\begin{equation}\label{relation1}
	\cF:= \{ (x,J) \in \cG: \ F(x,J) \geq 0 \},
	\end{equation} 
is a fiberegular $\cM$-monotone subequation. Moreover, the fibers of $\cF$ are non-empty if one assumes the non-empty condition \eqref{GammaNE}. Each fiber $\cF_x$ in not all of $\cJ^2$ in the constrained case and also in the unconstrained case if one assumes
	\begin{equation}\label{proper}
\big\{ J \in \call J^2: \ F(x,J) < 0 \big\} \neq \emptyset \quad \text{for each} \ x \in X.
\end{equation}

\end{thm}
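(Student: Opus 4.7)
The plan is to verify, in order, four structural items: the $\cM$-monotonicity of $\cF$, the fiberwise properties (P) and (N), fiberegularity, and the topological triad (T), plus the non-emptiness/properness statements for the fibers. The first three are essentially free from what has already been built up.

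For $\cM$-monotonicity, pick $(x,J) \in \cF$ and $J'\in\cM$: the $\cM$-monotonicity of $\cG$ gives $J+J'\in\cG_x$, while the $\cM$-monotonicity of $F$ gives $F(x,J+J')\ge F(x,J)\ge 0$, so $(x,J+J')\in\cF$. Properties (P) and (N) then come for free, since every monotonicity cone subequation $\cM$ contains the minimal cone $\cM_0=\cN\times\{0\}\times\cP$ (this is exactly \eqref{MM}). Fiberegularity is nothing but Theorem \ref{thm:CMM} applied to the fiber map $\Theta(x):=\cF_x=\{J\in\cG_x: F(x,J)\ge 0\}$. The non-emptiness of the fibers follows from \eqref{GammaNE}, since $\Gamma(x)\subset\cF_x$. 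Properness of each fiber is immediate in the constrained case from $\cF_x\subset\cG_x\subsetneq\cJ^2$, and in the unconstrained case it is exactly the content of \eqref{proper}: any $J$ with $F(x,J)<0$ lies outside $\cF_x$.

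The substantive work is property (T). The geometric mechanism is this: pick once and for all $J_0\in\intr\cM$. Then for every $J\in\cF_x$ and every $\eta>0$ the $\cM$-monotonicity of the pair $(F,\cG)$ places $J+\eta J_0$ in $\cG_x$ with $F(x,J+\eta J_0)\ge F(x,J)\ge 0$, and in fact $J+\eta J_0$ sits in the interior of $\cF_x$. Indeed, by the regularity condition \eqref{UCF} applied with $y=x$ (and replacing $\eta$ by $\eta/2$) plus the fact that $\cG$ is fiberegular, a sufficiently small ball around $J+\eta J_0$ in $\cJ^2$ remains inside $\cG_x$ with $F\ge\eta/2\cdot c>0$; so $J+\eta J_0\in\intr(\cF_x)$. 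Letting $\eta\dto 0$ yields $\cF_x\subset\overline{\intr(\cF_x)}$, hence (T2). For (T3), the inclusion $(\intr\cF)_x\subset\intr(\cF_x)$ is topologically trivial, while for the reverse inclusion one takes $J\in\intr(\cF_x)$, so that $B_\rho(J)\subset\cF_x$ for some $\rho>0$; combining the fiberegularity of $\cG$ (in the form of Proposition \ref{unifcontequiv}(c)) with the regularity \eqref{UCF} of $F$, one finds $\delta>0$ such that for all $y\in B_\delta(x)$, $B_{\rho/2}(J+\eta J_0)\subset\cF_y$ provided $\eta>0$ is small; this exhibits an open neighbourhood of $(x,J)$ inside $\cF$, giving $J\in(\intr\cF)_x$. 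Finally (T1) follows by combining the fiberwise (T2) with the fiberegularity: an arbitrary point $(x,J)\in\cF$ can be approximated by $(x,J+\eta J_0)$ which, by the argument just given, lies in $\intr\cF$.

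The main obstacle I expect is (T3), as it is the only assertion in which the interior in the total space $\cJ^2(X)$ must be matched fiberwise, and this is precisely where the interaction between Hausdorff continuity of $\Theta$ and the regularity of $F$ becomes delicate; the other items reduce, as above, to direct manipulations of the $\cM$-monotonicity. It is presumably for this reason that the authors defer the most technical arguments around (T) to Appendix \ref{AppA}.
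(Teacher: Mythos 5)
Your overall architecture matches the paper's: (P), (N) and $\cM$-monotonicity by direct monotonicity arguments, fiberegularity of the fiber map $\Theta(x)=\cF_x$ via Theorem \ref{thm:CMM}, the non-emptiness/properness statements as you say, and then the triad (T), with (T3) handled exactly as in Proposition \ref{contt1} of Appendix \ref{AppA} (the inclusion $\cF_x+\eta J_0\subset\cF_y$ for $y$ near $x$, coming from fiberegularity of $\cG$ plus \eqref{UCF}, produces the open box $B_\delta(x)\times\call B_{\rho/2}(J+\eta J_0)\subset\cF$ containing $(x,J)$ once $\eta\trinorm{J_0}<\rho/2$). However, there is a genuine flaw in your (T2) step. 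You claim that a small ball about $J+\eta J_0$ stays inside $\{F(x,\cdot)\geq 0\}$ because ``$F\geq \eta/2\cdot c>0$'' there, invoking \eqref{UCF} with $y=x$. No such strict lower bound is available: \eqref{UCF} with $y=x$ (and the $\cM$-monotonicity \eqref{MMO}) only give $F(x,J+\eta J_0)\geq F(x,J)\geq 0$, and no quantitative gain in the $J_0$-direction is assumed in this theorem (that would be a tameness-type hypothesis as in \eqref{tame}, which the paper explicitly does not impose; e.g.\ $F\equiv 0$ on $\cG$ is allowed, in which case your bound is false). The conclusion $J+\eta J_0\in\intr(\cF_x)$ is nevertheless true, but for a simpler reason you already have in hand: since $J_0\in\intr\cM$ there is $\rho>0$ with $\call B_{\eta\rho}(\eta J_0)\subset\cM$ (using that $\cM$ is a cone), so $\call B_{\eta\rho}(J+\eta J_0)=J+\call B_{\eta\rho}(\eta J_0)\subset J+\cM\subset\cF_x$ by the $\cM$-monotonicity of the fiber $\cF_x$ itself (every point of that ball is $J+M$ with $M\in\cM$, hence lies in $\cG_x$ with $F(x,J+M)\geq F(x,J)\geq 0$). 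This is exactly the mechanism of Proposition \ref{mont2} (equivalently Proposition 4.7 of \cite{CHLP22}), which the paper cites for (T2); neither \eqref{UCF} nor fiberegularity is needed for this fiberwise statement.

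A second, smaller omission: (T1) and (T2) are equalities, so you also need $\overline{\intr\cF}\subset\cF$ and $\overline{\intr(\cF_x)}\subset\cF_x$, i.e.\ closedness of $\cF$ and of its fibers; your argument only produces the inclusions $\cF\subset\overline{\intr\cF}$ and $\cF_x\subset\overline{\intr(\cF_x)}$. Closedness is immediate here — $\cG$ and its fibers are closed and $F$ is continuous on $\cG$, so $\cF=\{(x,J)\in\cG: F(x,J)\geq 0\}$ is closed in $\cJ^2(X)$ and each $\cF_x$ is closed in $\cJ^2$ (alternatively, Proposition \ref{contclos} gives closedness of $\cF$ from the continuity of $\Theta$) — but it must be said, since the paper's route to (T1) is precisely ``closed $+$ (T2) $+$ (T3) $\Rightarrow$ (T1)'' (Proposition \ref{(C)nec}), which is also what your approximation-by-$(x,J+\eta J_0)$ argument amounts to. With these two repairs your proof is correct and essentially coincides with the paper's, which packages the (T)-verifications into Propositions \ref{mont2}, \ref{contt1}, \ref{contclos} and \ref{(C)nec} of Appendix \ref{AppA}.
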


\begin{proof} As already noted, $\cF$ defined by \eqref{relation1} will satisfy properties (P) and (N) with fiber map
	\begin{equation}\label{fiber_map}
	 \Theta(x):= \cF_x = \{ J \in \cG_x: \ F(x,J) \geq 0 \}, \ \ \forall \, x \in X
	 \end{equation} 
	 which is $\cM$-monotone and continuous (by Theorem \ref{thm:CMM}). Hence it only remains to show that $\cF$ satisfies property (T), which we recall is the triad 
	 \begin{equation}\tag{T1} 
	 \cF = \overline{\intr \cF};
	 \end{equation}
	 \begin{equation}\tag{T2} 
	 \cF_x = \overline{\intr \left( \cF_x \right)}, \ \ \forall \, x \in X;
	 \end{equation}
	 \begin{equation}\tag{T3} \left( \intr \cF \right)_x = \intr \left( \cF_x \right), \ \ \forall \, x \in X.
	 \end{equation}
The fiberwise property (T2), one can apply Proposition 4.7 of \cite{CHLP22} which says that (T2) holds provided that the fibers $\cF_x$ are closed and $\cM$-monotone. This leaves properties (T1) and (T3). It is not hard to see that if $\cF$ is closed, then properties (T2) plus (T3) imply (T1) (see Proposition \ref{(C)nec}). Hence for a $\cM$-monotone pair $(F, \cG)$, the constraint set $\cF$ defined by \eqref{relation1} will be a subequation if $\cF$ is closed and satisfies (T3). Moreover, since the inclusion $ \left( \intr  \cF \right)_x \subset \intr \left( \cF_x \right)$ is automatic for each $x \in X$, (T3) reduces to the reverse inclusion, which holds provided that $\cF$ is $\cM$-monotone and fiberegular in the sense of Defintion \ref{defn:fibereg}. This fact is proved in Proposition \ref{contt1}. Finally, by Theorem \ref{thm:CMM}, $\cF$ will be fiberegular if $\cG$ is fiberegular provided that $F$ satisfies the regularity condition \eqref{UCF}. 
\end{proof}


\section{Comparison principles for proper elliptic PDEs with directionality}\label{sec:examples}

In this section,  we present comparison principles for $\cM$-monotone operators by potential theoretic methods which combine monotonicity, duality and fiberegularity. A general comparison principle  will be presented which gives sufficient structural conditions on the operator $F$ which ensure that $F$ satisfies the correspondence principle (Theorem \ref{thm:corresp_gen}) with respect to some subequation constraint set $\cF$. The comparison principle for the operator $F$ will follow from the general comparison principle (Theorem \ref{thm:GCT}) satisfied by the subequation $\cF$. Representative examples will be given for the constrained case in Examples \ref{exe:OTE}, \ref{exe:KPO} and 
\ref{exe:hyp_poly}. As discussed in the introduction, we are primarily interested in examples will have gradient dependence in order to distinguish them from known examples the the pure second order and gradient-free cases that one finds in \cite{CP17} and \cite{CP21}, respectively. The needed monotonicity in the gradient variables is called {\em directionality}, which together with {\em proper ellipticity} is incorporated into the notion of $\cM$-monotonicity for some monotonicity cone such as $\cM(\gamma, \cD, R)$ where $\cD \subsetneq \R^n$ is a {\em directional cone}.

Throughout the section $\cM$ will be a constant coefficient monotonicity cone subequation and $X$ an open subset of $\R^n$.

\subsection{A general comparison principle for PDEs with sufficient monotonicity}\label{subsec:GCP}

We begin with the general result. 

\begin{thm}[Comparison principle for $\call M$-monotone PDEs]\label{thm:CP_MME} Let $F \in C(\cG)$ be an operator with domain either $\cG = \cJ^2(X)$ or $\cG \subsetneq \cJ^2(X)$ a fiberegular $\cM$-monotone subequation. Suppose that $F$ satisfies the following structural conditions 
	\begin{itemize}
		\item[(i)] $F$ is $\cM$-monotone 
		\begin{equation}\label{TCP1}
		F(x,J + J') \geq F(x,J) \quad \forall\, x \in X,\ J \in \cG_x,\ J'\in \cM;
		\end{equation}
		\item[(ii)] $F$ satisfies the regularity property \eqref{UCF}: for some fixed $J_0 \in \intr \call M$, for every $\Omega \ssubset X$ and for every $\eta > 0$, there exists $\delta= \delta(\eta, \Omega) > 0$ such that
		\begin{equation}\label{TCP2}
		F(y, J + \eta J_0) \geq F(x, J) \quad \forall J \in \cG_x,\ \forall x,y \in \Omega \ \text{with}\ |x - y| < \delta;
		\end{equation}
		\item[(iii)] $F$ satisfies the non-empty condition \eqref{GammaNE}:
		\begin{equation}\label{TCP3}
		\mbox{$\Gamma(x) \defeq \big\{ J \in \call \cG_x:  \ F(x,J) = 0 \big\} \neq \emptyset$ \quad for each $x \in X$.}
	\end{equation}
	\item[(iv)] $F$ is compatible with the subequation $\cF:= \{ (x,J)\in\cG:\ F(x,J) \geq 0\}$; that is,
		\begin{equation}\label{TCP4}
		\intr \cF = \{ (x,J) \in \cG: \ F(x,J) >0 \},
		\end{equation}
		or, equivalently $\partial \cF = \{ (x,J) \in\cG: \ F(x,J) = 0 \}$.
		\end{itemize}
Then, for every bounded domain $\Omega \ssubset X$ for which $\cM$ admits $\psi \in C^2(\Omega) \cap \USC(\overline{\Omega})$ which is strictly subharmonic on $\Omega$, the comparison principle for the equation $F(J^2u) = 0$ holds on $\overline{\Omega}$; that is, 
	\begin{equation}\label{TCP5}
	\mbox{$u \leq v$ on $\de \Omega \quad \implies \quad u \leq v$ in $\Omega$.}
	\end{equation}
	if $u \in \USC(\Omega)$ is a $\cG$-admissible viscosity subsolution of  $F(J^2u) = 0$ in $\Omega$ and  $v \in \LSC(\Omega)$ is a $\cG$-admissible viscosity supersolution of $F(J^2u) = 0$ in $\Omega$.

If, in addition, $\psi$ satisfies
$$
\psi \equiv-\infty \text { on } \partial \Omega \setminus \partial^{-} \Omega
$$
for some $\partial^{-} \Omega \subset \partial \Omega$, then
\begin{equation}\label{TCP6}
	\mbox{$u \leq v$ on $\de^{-} \Omega \quad \implies \quad u \leq v$ in $\Omega$}
	\end{equation}
	if $u \in \USC(\Omega)$ is a $\cG$-admissible viscosity subsolution of  $F(J^2u) = 0$ in $\Omega$ and  $v \in \LSC(\Omega)$ is a $\cG$-admissible viscosity supersolution of $F(J^2u) = 0$ in $\Omega$.
\end{thm}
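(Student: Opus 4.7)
The plan is to deduce both \eqref{TCP5} and \eqref{TCP6} by bridging from the operator framework to the potential-theoretic framework via the Correspondence Principle (Theorem~\ref{thm:corresp_gen}), and then applying the General Comparison Theorem \ref{thm:GCT} to the constraint set
\[
\cF := \{(x,J) \in \cG : F(x,J) \geq 0\}.
\]
All the heavy lifting has already been done in the preceding machinery; the proof should consist of checking that the hypotheses of those two theorems are satisfied under assumptions (i)--(iv) and the existence of the strict approximator $\psi$.

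First, I would verify that $\cF$ is a fiberegular $\cM$-monotone subequation. This is precisely the content of Theorem~\ref{thm:MMS}: hypothesis (i), which is the $\cM$-monotonicity of $(F,\cG)$, together with hypothesis (ii), the regularity property \eqref{UCF}, and the fiberegularity of $\cG$ (which is automatic in the unconstrained case $\cG = \cJ^2(X)$ and is assumed in the constrained case), imply that $\cF$ is a fiberegular $\cM$-monotone subequation. Hypothesis (iii) ensures that each fiber $\cF_x$ is non-empty (since $\Gamma(x) \subset \cF_x$), and in the unconstrained case one would also need each $\cF_x$ to be a proper subset of $\cJ^2$, which follows from the $\cG$-admissibility of sub/supersolutions in our setting (if $\cF_x = \cJ^2$ then $F \geq 0$ everywhere in $\cJ^2$, which is incompatible with the nontrivial zero locus coming from (iii) and the minimal monotonicity, or alternatively one supplements with the natural non-degeneracy \eqref{proper}).

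Next, I would invoke the Correspondence Principle (Theorem~\ref{thm:corresp_gen}) using the compatibility hypothesis (iv): since $\cF$ is a subequation and compatibility \eqref{compatibility1} holds, one obtains the two equivalences
\begin{align*}
u \in \USC(\Omega) \ \text{is a $\cG$-admissible subsolution of } F(J^2u)=0 &\iff u \in \cF(\Omega), \\
v \in \LSC(\Omega) \ \text{is a $\cG$-admissible supersolution of } F(J^2u)=0 &\iff v \ \text{is $\cF$-superharmonic on } \Omega.
\end{align*}
Thus the operator-theoretic comparison statements \eqref{TCP5} and \eqref{TCP6} reduce to the corresponding potential-theoretic comparison statements for $\cF$ on $\overline{\Omega}$.

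Finally, to conclude, I would apply Theorem~\ref{thm:GCT} to the fiberegular $\cM$-monotone subequation $\cF$ on $\overline{\Omega}$: since $\cM$ admits the strict approximator $\psi \in C^2(\Omega) \cap \USC(\overline{\Omega})$ by assumption, the elliptic comparison principle \eqref{cp} holds for $\cF$ on $\overline{\Omega}$, which via the correspondence translates to \eqref{TCP5}. For the parabolic refinement \eqref{TCP6}, the additional property $\psi \equiv -\infty$ on $\partial \Omega \setminus \partial^- \Omega$ triggers the second part of Theorem~\ref{thm:GCT}, yielding the parabolic version \eqref{cpm} for $\cF$, which again transfers to \eqref{TCP6} by the correspondence. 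There is no genuine obstacle here beyond carefully assembling the implications; the conceptual work was done in the Correspondence Principle and in establishing that the correspondence relation produces a fiberegular $\cM$-monotone subequation, while the comparison for such subequations is precisely Theorem~\ref{thm:GCT}.
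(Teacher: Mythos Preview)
Your proposal is correct and follows essentially the same approach as the paper: apply Theorem~\ref{thm:MMS} (using hypotheses (i), (ii), (iii) and the fiberegularity of $\cG$) to conclude that $\cF$ is a fiberegular $\cM$-monotone subequation, invoke the Correspondence Principle (Theorem~\ref{thm:corresp_gen}) via the compatibility assumption (iv), and then apply the General Comparison Theorem~\ref{thm:GCT} in both its elliptic and parabolic forms. The paper's proof is slightly more terse but structurally identical.
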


\begin{proof} Since $(F, \cG)$ is an $\cM$-monotone operator-subequation pair by hypothesis  (with $\cG$ fiberegular in the constrained case), the regularity condition \eqref{TCP2} ensures that the  constraint set defined by $\cF:= \{ (x,J)\in\cG:\ F(x,J) \geq 0\}$ is a fiberegular $\cM$-monotone subequation by applying Theorem \ref{thm:MMS}. $\cF$ has non empty fibers by \eqref{TCP3}. In particular, $(F, \cG)$ is a proper elliptic ($\cM_0$-monotone) pair with $\cF$ a subequation. The compatibility condition \eqref{TCP4} then ensures that the correspondence principle of Theorem \ref{thm:corresp_gen} holds. Hence, the comparison principle \eqref{TCP5} for $\cG$-admissible sub/supersolutions of the equation $F(J^2u) = 0$ is equivalent to the comparison principle for  
	 $\cF$-sub/superharmonics. The existence of the strict approximator $\psi$ for $\cM$ on $\Omega$ then implies the needed potential theoretic comparison principle by Theorem \ref{thm:GCT}, which completes the proof in both cases.
\end{proof}

\subsection{Examples for both the elliptic and parabolic versions}\label{subsec:ell_par}

 We now give two illustrative examples for which the general comparison principle stated in Theorem \ref{thm:CP_MME} applies. One example for each of the two versions of the theorem. These will be all constrained cases. We will in particular focus our attention on equations of the form
\[
g(Du) F(x, Du, D^2 u) = f(x),
\]
which arise in many areas of mathematical analysis (as we will see below), and are currently the object of intense research activity, see for example \cite{BiDe, ImSil} and the references therein. 

Our first representative example will be treated using the first part (the elliptic version) of Theorem \ref{thm:CP_MME} which ``sees'' the entire boundary.

\begin{example}[Optimal transport]\label{exe:OTE} The equation
\begin{equation}\label{e:ot}
g(Du) \det(D^2 u) = f(x)
\end{equation}
arises in the theory of optimal transport, and describes the optimal transport plan from a source density $f$ to a target density $g$. Here, we assume that $f, g \in C(\overline{\Omega})$ are nonnegative, and we require that $g$ satisfies a (strict) directionality property with respect to some directional cone $\cD \subset \R^n$ (a closed convex cone with vertex at the origin and non-empty interior). More precisely, we assume that
\begin{equation}\label{g_ot1}
g(p + q) \ge g(p), \quad \text{for each} \ p,q \in \cD
\end{equation}
and that there exists $\bar q \in \intr \cD$ and a modulus of continuity $\omega : (0,\infty) \to (0,\infty)$ (satisfying $\omega(0^+) = 0$) such that   
\begin{equation}\label{g_ot2}
g(p + \eta \bar q) \ge g(p) + \omega(\eta), \quad \text{for each} \ p,q \in \cD \ \text{and each} \ \eta > 0.
\end{equation}

In other words, 
 $g$ needs to be increasing on $\cD$ in the directions of $\cD$ and strictly increasing in some direction $\bar q \in \intr \cD \subset \R^n$. Notice also that \eqref{g_ot2} implies that $g \not\equiv 0$ since $g(\bar{q}) \geq g(0) + \omega(1) > 0$.

Then, setting $\cM(\cD,\cP) = \R \times \cD \times \cP$, we have the following comparison principle.

\begin{prop}\label{prop:OTE} Let $f, g \in C(\overline{\Omega})$ be nonnegative, and assume that $g$ satisifes \eqref{g_ot1}-\eqref{g_ot2}. Then, the comparison principle holds for for $\cM(\cD,\cP)$-admissible sub/supersolutions of the equation \eqref{e:ot} on any bounded domain $\Omega$. 
\end{prop}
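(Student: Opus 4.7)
The plan is to invoke the general comparison principle of Theorem \ref{thm:CP_MME} in the constrained case with operator $F(x,r,p,A) \defeq g(p)\det(A) - f(x)$, constraint $\cG \defeq \cM(\cD,\cP) = \R \times \cD \times \cP$, and monotonicity cone $\cM \defeq \cM(\cD,\cP)$. Since $\cG$ has constant fibers, its fiberegularity is automatic. The $\cM$-monotonicity of the pair $(F,\cG)$ is immediate: if $(r,p,A) \in \cG$ and $(r',p',A') \in \cM$, then $p+p' \in \cD$ and $A+A' \in \cP$, and combining \eqref{g_ot1}, nonnegativity of $g$, and monotonicity of $\det$ on $\cP$ yields $g(p+p')\det(A+A') \geq g(p)\det(A)$. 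Non-emptiness \eqref{TCP3} holds as well: applying \eqref{g_ot2} with $p=0$ and $\eta=1$ gives $g(\bar q) \geq \omega(1) > 0$, so the jet $(0,\bar q, (f(x)/g(\bar q))^{1/n}I)$ lies in $\cG$ and achieves $F=0$ at any $x$.

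The main technical step will be verifying the regularity condition \eqref{UCF}; this is where the strict part of the directionality \eqref{g_ot2} is indispensable. Choosing $J_0 \defeq (0, \bar q, I) \in \intr\cM$, for any $J = (r,p,A) \in \cG$ and any $x,y \in \Omega$ one decomposes
\begin{equation*}
F(y, J+\eta J_0) - F(x,J) = \bigl[g(p+\eta\bar q) - g(p)\bigr]\det(A+\eta I) + g(p)\bigl[\det(A+\eta I) - \det(A)\bigr] + f(x) - f(y).
\end{equation*}
By \eqref{g_ot2} the first bracket is at least $\omega(\eta)$, while $\det(A+\eta I) \geq \eta^n > 0$ since $A \geq 0$; the middle summand is nonnegative since $g \geq 0$ and $\det$ is monotone on $\cP$. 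The first two contributions therefore admit a lower bound $\omega(\eta)\eta^n$ that is uniform in $J \in \cG$, so by uniform continuity of $f$ on $\bar\Omega$ one can choose $\delta = \delta(\eta,\Omega)$ making $|f(x) - f(y)| < \omega(\eta)\eta^n$ whenever $|x-y|<\delta$, and the claim follows.

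Compatibility \eqref{TCP4} is then verified directly: the inclusion $\intr\cF \subset \{F > 0\}\cap\cG$ is a consequence of $F$ being continuous on $\cG$ and $\cF$ being closed, since a $\cJ^2(X)$-interior point of $\cF$ must lie in $\intr\cG$ and satisfy the strict inequality $F > 0$; for the reverse inclusion one notes that $F(x,J) > 0$ forces $\det(A) > 0$, hence $A \in \intr\cP$, while in the natural OT framework (where the target density is supported in $\bar\cD$ so that $g$ vanishes on $\partial \cD$) $g(p) > 0$ forces $p \in \intr\cD$. Finally, since $\cM = \cM(0, \cD, +\infty)$ has $R = +\infty$, Theorem \ref{ffctcs}(ii) and \cite[Chapter~6]{CHLP22} supply a quadratic $\psi(x) = A\pair{\bar q}{x} + \tfrac{B}{2}|x|^2$, with $A$ large compared to $B \cdot \mathrm{diam}(\Omega)$, as a $C^2$ strict $\cM$-subharmonic on any bounded $\Omega$, and the elliptic version of Theorem \ref{thm:CP_MME} concludes the argument.
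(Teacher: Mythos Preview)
Your argument follows the paper's exactly: apply Theorem~\ref{thm:CP_MME} with $\cG = \cM = \cM(\cD,\cP)$, verify condition (ii) via $J_0 = (0,\bar q,I)$ and the uniform lower bound $\omega(\eta)\eta^n$ absorbing the oscillation of $f$, and exhibit a quadratic strict $\cM$-subharmonic (the paper takes $\psi(x) = \langle\bar q,x\rangle + \alpha|x|^2$ with $\alpha>0$ small, which is your construction up to rescaling).

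Your handling of compatibility (iv), however, is the one weak point. The inclusion $\intr\cF \subset \{F>0\}\cap\cG$ does not follow from ``$F$ continuous and $\cF$ closed'' alone (consider $F\equiv 0$); what is needed is that $J \in \intr\cF_x$ forces $p \in \intr\cD$ and $A \in \intr\cP$, and then strict monotonicity of $A \mapsto \det A$ on $\intr\cP$ together with $g>0$ on $\intr\cD$ (the latter is a consequence of \eqref{g_ot2} and $g\geq 0$). For the reverse inclusion you insert the hypothesis that $g$ vanishes on $\partial\cD$, which is not part of the proposition; without it, a jet with $p \in \partial\cD$, $g(p)>0$, $A>0$ and $g(p)\det A > f(x)$ lies in $\cG\cap\{F>0\}$ but not in $\intr\cF$, so full compatibility can genuinely fail. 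The paper simply calls (iv) ``straightforward'' and gives no details, so this is a spot both treatments leave underspecified; a clean fix is to note that only the inclusion $\intr\cF_x \subset \{J\in\cG_x : F(x,J)>0\}$ is needed for the implication ``$\cG$-admissible supersolution $\Rightarrow$ $\cF$-superharmonic'' used in comparison, and that half does hold under the stated hypotheses.
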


\begin{proof} It suffices to show that the first part of Theorem \ref{thm:CP_MME} (the elliptic version) can be applied to 
$$
	F(x,r,p,A) := g(p) \det(A) - f(x) \quad \text{on} \quad  \cG := \Omega \times \cM(\cD,\cP).
$$
 Assumptions 
  \textit{(i)}, \textit{(iii)} and \textit{(iv)} on $F$ are straightforward to check, which leaves the continuity property \textit{(ii)}. Pick $J_0 = (0, \bar q, I)$. Then, for every $\eta > 0$ and $J = (r,p,A) \in \cM(\cD, \cP)$, 
\begin{equation*} \begin{split}
F(y, J + \eta J_0 ) &= F(y, r, p + \eta \bar q, A + \eta I) = g(p + \eta \bar q) \det(A + \eta I) - f(y) \\ &\ge [g(p) + \omega(\eta)][ \det(A) + \eta^n ] - f(y) \ge
F(x,J) + f(x) - f(y) + \omega(\eta)\eta^n.
\end{split}\end{equation*}
Notice that $f(x) - f(y) + \omega(\eta)\eta^n \ge 0$ provided that $|x-y| \le \delta = \delta(\eta)$ and hence \textit{(ii)} holds.

Finally, in order to apply the first part of Theorem \ref{thm:CP_MME}, it remains to show that each $\Omega \ssubset X$ admits a strictly $\cM(\cD,\cP)$-subharmonic function, where $\intr \cM(\cD, \cP) = \R \times \intr \cD \times \intr \cP$. The function $\psi(x) := \langle \bar q , x \rangle + \alpha |x|^2$ satisfies
$$
D \psi(x) = \bar{q} + 2 \alpha x \in \intr \cD  
$$
provided that $\alpha = \alpha(\Omega)> 0$ is small enough and $D^2 \psi(x) = 2 \alpha I \in \intr \cP$ for all $\alpha > 0$.
\end{proof}

Note that, in view of Examples \ref{exe:CSE} and \ref{exe:Dmon}, $u$ is a $\cM(\cD,\cP)$-admissible subsolution of \eqref{e:ot} if and only if it is a subsolution of the PDE in the standard viscosity sense, 
it is convex on $\Omega$, and it is nondecreasing in the $\cD^\circ$-directions; that is,
\[
u(x) \ge u(x_0) \quad \text{for every $x, x_0 \in \Omega$ such that $[x_0, x] \subset \Omega,\, x-x_0 \in \cD^\circ$}.
\]
\end{example}

Our next representative example will be treated using the second part (the parabolic version) of Theorem \ref{thm:CP_MME} which ``sees'' only a reduced (parabolic) boundary. 

\begin{example}[Krylov's parabolic Monge-Amp\`ere operator]\label{exe:KPO} In \cite{Krylov76}, the following nonlinear parabolic equation is considered
\begin{equation}\label{e:kry}
-\partial_t u \det (D_x^2 u) = f(x,t), \quad   (x,t) \in \Omega \times (0,T) \subset \R^{n+1}.
\end{equation}
This equation is important in the study of deformation of surfaces by Gauss--Kronecker curvature and in Aleksandrov--Bakel'man-type maximum principles for (linear) parabolic 
equations. We have in this case a comparison principle with respect to the usual parabolic boundary of $\Omega$, for convex functions that are monotone nonincreasing in the $t$-direction. 

In preparation for the result, we introduce some notation as well as the relevant monotonicity cone subequation. As above, $(x,t) \in \R^{n+1} = \R^n \times \R$ will be used as global coordinates on the domain. We will also denote by $p = (p',p_{n+1}) \in \R^n \times \R$ in the first order part of the jet space. For matrices $A \in \cS(n+1)$, $A_n \in \cS(n)$ will denote the upper left $n \times n$ submatrix of $A$.

The relevant constant coefficient monotonicity cone subequation on $\R^{n+1}$ is 
\begin{equation}\label{MDPn}
\cM(\cD_n, \cP_n) := \{ (r,p,A) \in \R \times \R^{n+1} \times \cS(n+1):  \ p_{n+1} \le 0 \  \text{and} \  A_n \ge 0 \};
\end{equation}
that is, the $(n+1)$-th entry of $p$ is nonpositive and the $n\times n$ upper-left submatrix $A_n$ of $A$ is nonnegative. Notice that $\cM(\cD_n, \cP_n)$ is clearly a convex cone with vertex at the origin and nonempty interior, which ensures the topological property (T1). Negativity (N) is trivial as $\cM(\cD_n, \cP_n)$ is independent of $r \in \R$ and positivity (P) also holds. Hence $\cM(\cD_n, \cP_n)$ is a (constant coefficient) monotoncity cone subequation.

\begin{prop}\label{prop:KPO} Let $f \in C(\overline{\Omega} \times [0,T])$ be nonnegative with $\Omega \ssubset \R^n$ a bounded domain. 
Then, the parabolic comparison principle holds 
\[
\mbox{$u \leq v$ on $\left(\overline \Omega \times \{0\}\right) \cup \left(\partial \Omega \times (0, T)\right) \quad \implies \quad u \leq v$ in $\Omega \times (0,T)$.}
\]
for $\cM(\cD_n, \cP_n)$-admissible sub/supersolutions $u,v$ of  \eqref{e:kry}. 
\end{prop}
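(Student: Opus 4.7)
\smallskip
\noindent\textbf{Proof proposal.} The plan is to apply the parabolic version of Theorem~\ref{thm:CP_MME} to the operator
\[
F((x,t),r,p,A) \defeq -p_{n+1}\,\det(A_n) - f(x,t),
\]
defined on $\cG \defeq (\Omega \times (0,T)) \times \cM(\cD_n, \cP_n)$. I will verify the four structural hypotheses (i)--(iv) on the pair $(F,\cG)$ and then exhibit a strict approximator $\psi$ for $\cM(\cD_n, \cP_n)$ on the parabolic cylinder that blows down to $-\infty$ on the non-parabolic part of its boundary.

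Hypotheses (i), (iii), (iv) are essentially bookkeeping. The $\cM(\cD_n, \cP_n)$-monotonicity of $F$ will follow because adding $(r', p', A')$ with $p'_{n+1} \leq 0$ and $A'_n \geq 0$ only increases the already nonnegative factor $-p_{n+1}$ and enlarges $A_n$ in the $\cP_n$-order (using that principal submatrices of PSD matrices are PSD, together with monotonicity of $\det$ on $\cP_n$). The nonempty zero-locus condition is clear by choosing $A_n = \lambda I_n$, $p_{n+1} = -1$ with $\lambda^n = f(x,t)$. For compatibility I will observe that if $J \in \partial \cM(\cD_n, \cP_n)$ then either $p_{n+1}=0$ or $\det(A_n)=0$, so $F((x,t),J) = -f(x,t) \leq 0$; hence $\{F > 0\} \cap \cG \subset X \times \intr \cM(\cD_n, \cP_n) = \intr \cG$, after which continuity of $F$ together with the fact that $F$ strictly decreases when $p_{n+1}$ is increased (at jets with $\det(A_n) > 0$) gives $\intr \cF = \{F > 0\} \cap \cG$.

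For the regularity property (ii) I will choose $J_0 \defeq (0, (0,-1), I_{n+1}) \in \intr \cM(\cD_n, \cP_n)$. Then, for any $J = (r,p,A) \in \cM(\cD_n, \cP_n)$,
\[
F((y,s), J + \eta J_0) = (-p_{n+1} + \eta)\det(A_n + \eta I_n) - f(y,s) \geq -p_{n+1}\det(A_n) + \eta^{n+1} - f(y,s),
\]
using Minkowski's determinant inequality $\det(A_n + \eta I_n)^{1/n} \geq \det(A_n)^{1/n} + \eta$ together with $-p_{n+1} \geq 0$ and $\det(A_n) \geq 0$. Uniform continuity of $f$ on $\overline{\Omega}\times[0,T]$ then ensures $f(x,t) - f(y,s) + \eta^{n+1} \geq 0$ whenever $|(x,t)-(y,s)|$ is sufficiently small, delivering (ii).

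The crux, and the step I expect to be the main obstacle, is the construction of a strict approximator with the prescribed blow-up. I claim that
\[
\psi(x,t) \defeq \alpha|x|^2 - \frac{1}{T-t}, \qquad \alpha > 0,
\]
does the job. Its gradient is $(2\alpha x, -(T-t)^{-2})$ and its Hessian is $\mathrm{diag}(2\alpha I_n, -2(T-t)^{-3})$; in particular $p_{n+1} < 0$ strictly and $(D^2\psi)_n = 2\alpha I_n > 0$ strictly, so $\psi \in C^2(\Omega\times(0,T))$ is strictly $\cM(\cD_n, \cP_n)$-subharmonic on the cylinder. Since $\psi \to -\infty$ as $t \to T^-$, $\psi$ extends as $-\infty$ on the upper cap $\overline{\Omega}\times\{T\} = \partial X \setminus \partial^- X$ and remains in $\USC(\overline X)$. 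The parabolic version of Theorem~\ref{thm:CP_MME} now yields the required implication.
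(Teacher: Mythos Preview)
Your proposal is correct and follows essentially the same approach as the paper: the paper applies the parabolic version of Theorem~\ref{thm:CP_MME} to the same operator $F$ on the same constraint $\cG$, leaves the verification of (i)--(iv) to the reader (referring to the analogous optimal-transport example), and takes the strict approximator $\psi(x,t) = |x|^2 - \tfrac{1}{T-t}$, i.e., your $\psi$ with $\alpha = 1$. Your verification of (ii) via the bound $(-p_{n+1}+\eta)\det(A_n+\eta I_n) \geq -p_{n+1}\det(A_n) + \eta^{n+1}$ mirrors the paper's earlier computation for the optimal-transport operator (there the simple eigenvalue bound $\det(A+\eta I) \geq \det(A) + \eta^n$ is used rather than Minkowski, but either suffices).
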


\begin{proof} As in the previous example, the idea is to apply Theorem \ref{thm:CP_MME}. This time we will make use of the parabolic version applied to
$$
	F((x,t), r,p,A) := -p_{n+1} \det(A_n) - f(x,t) \quad \text{on}  \quad \cG := \left(\Omega \times (0,T)\right) \times \cM(\cD_n, \cP_n).
$$ 
The assumptions \textit{(i)}, \textit{(ii)}, \textit{(iii)}, and \textit{(iv)} are checked in the same fashion as done in the previous example. 

Denoting by $X = \Omega \times (0,T)$, it remains to show that there exists $\psi \in C^2(X) \cap \USC(\overline{X})$ which is strictly $\cM(\cD_n, \cP_n)$-subharmonic on $X$ and satisfies 
$$
\psi \equiv-\infty \text { on } \partial X \setminus \partial^{-} X:= \left(\overline \Omega \times \{0\}\right) \cup \left(\partial \Omega \times (0, T)\right) 
$$
The function
\[
\psi(x,t) := |x|^2 - \frac1{T-t}
\]
is strictly $\cM(\cD_n, \cP_n)$-subharmonic, and $\psi(x,t) \to -\infty$ as $t \to T^-$ (that is, $\psi \equiv -\infty$ on $\overline \Omega \times \{T\}$), so we deduce the comparison principle in the form \eqref{cpm}.
\end{proof}

In view of Examples \ref{exe:CSE} and \ref{exe:Dmon}, $u$ is a $\cM(\cD_n, \cP_N)$-admissible subsolution of \eqref{e:ot} if and only if it is a subsolution of the equation in the standard 
viscosity sense, it is convex in the $x$ variable, and it is nondecreasing in the $ \{p_{n+1} \le 0\}^\circ$-directions, which means that it is nonincreasing in the $t$ variable.

Clearly, more general PDEs of the form
\[
(-\partial_t u) F (x,t,u,D_x u,D_x^2 u) = f(x,t),
\]
could be addressed in a similar way (under suitable monotonicity assumptions), as well as 
 ``standard'' parabolic  equations $\partial_t u = F (x,t,u,D_x u,D_x^2 u)$.
\end{example}

\subsection{Equations modelled on hyperbolic polynomials}\label{subsec:hyp_poly}

Next we present perhaps the simplest meaningful example of a G\aa rding-Dirichlet operator, which are defined via hyperbolic polynomials in the sense of G\aa rding, as mentioned in Remark \ref{rem:CC_UC}. We begin with the definition.

\begin{defn}\label{defn:hyp_poly}
 A homogeneous polynomial $\gf$ of degree $m$ on a finite dimensional real vector space $V$ is called {\em hyperbolic} with respect to a direction $a \in V$ if $\gf(a) > 0$ and if the one-variable polynomial $t \mapsto \gf(t a + x)$ has exactly $m$ real roots for each $x \in V$.
\end{defn}

  There are many examples of nonlinear PDEs that involve hyperbolic polynomials. The most basic example is the Monge-Amp\`ere operator where $\gf(A) = \det A$ for $A \in \cS(n)$ which is hyperbolic in the direction of the identity matrix $I$.  
A systematic study of the relationship between the G\aa rding theory of hyperbolic polynomials and pure second-order equations has been carried out in \cite{HLGarding} (see also \cite{CHLP22}). See also Section 11.6 of \cite{CHLP22}.
  
  In the following example, we observe that the theory of hyperbolic polynomials is flexible enough to cover equations on the whole 2-jet space, providing a natural notion of monotonicity. As before, we focus our attention on the gradient dependence.

\begin{example}\label{exe:hyp_poly}
On a bounded domain $\Omega \subset \R^2$, we consider the equation
\begin{equation}\label{foGarding}
u_x^2 - u_y^2 = 0,
\end{equation}
which builds upon perhaps the simplest hyperbolic polynomial $\gf(p_1, p_2) = p_1^2 - p_2^2$. Since $\gf$ is hyperbolic in the direction $e = (1,0) \in \R^2$, a general construction of G\aa rding yields a monotonicity cone for the operator $F(x,r,p,A) = \gf(p)$. In this example, the negatives of the two real roots $t \mapsto \gf(t (1,0) + p)$ can be ordered
\[
\lambda_1^{\gf}(p) := p_1 - |p_2| \leq p_1 + |p_2| := \lambda_2^{\gf}(p) 
\]
and are called the {\em G\aa rding $e$-eigenvalues} of $\gf$. Since $\gf(e)=1$ (which can always be arranged by normalization since $\gf(e) > 0$) one has
$$
\gf(p) = \lambda_1^{\gf}(p) \lambda_2^{\gf}(p),
$$
so that the first order differential operator defined by the degree two polynomial $\gf$ (which is $e$-hyperbolic) is the product of two {\em G\aa rding $e$-eigenvalues} of $\gf$. G\aa rding's theory also says that the closed {\em G\aa rding cone}
$$
	\overline{\Gamma} := \{ p \in \R^2: \ \lambda_1^{\gf}(p) := p_1 - |p_2| \geq 0\}
$$
must be convex and is characterized by the fact that its interior is the connected component of $\{\gf \neq 0\}$ which contains $e$ (both facts are clearly true here). Finally, since the closed convex cone with vertex at the origin $\overline{\Gamma} \subset \R^n$ has nonempty interior, 
\[
\cM_{\gf} := \R \times \big\{ p \in \R^2: \ \lambda_1^{\gf}(p) := p_1 - |p_2| \geq 0\} \times \cS(2),
\]
is a constant coefficient pure first order monotonicity cone subequation on $\R^2$. Moreover it is easy to check that $F$ is $\cM$-monotone on $\cF:= \R^2 \times \cM$. Finally, $F$ is compatible with $\cF$ by \cite[Proposition 2.6]{HLGarding} (which one can also check directly).

 It is then rather easy to produce strict $\cM_{\gf}$-subharmonics, and therefore to apply Theorem \ref{thm:CP_MME} to deduce a comparison result. We shall further specialize our setting to  $\Omega = (a,b) \times (c,d)$, where comparison on a reduced boundary is possible. To this end, take 
\[
\psi(x,y) = \frac{1}{a - x},
\]
which goes to $-\infty$ as $x \to a^+$, and satisfies 
\[
\lambda_1^{\gf}(D\psi(x,y)) = \psi_x(x,y) - |\psi_y(x,y)| = \frac{1}{(a-x)^2} > 0 \quad \text{on $\Omega$}.
\]
Therefore, we have the following statement.

\begin{prop}\label{prop:DGO} Let $\Omega = (a,b) \times (c,d)$. Then the comparison principle holds for $\cM_{\gf}$-admissible sub/supersolutions $u,v$ of  \eqref{foGarding}, that is, 
\[
\mbox{$u \leq v$ on $\partial \Omega \setminus \{x=a\} \quad \implies \quad u \leq v$ in $\Omega$.}
\]
\end{prop}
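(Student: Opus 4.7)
The plan is to apply the parabolic version of \Cref{thm:CP_MME} to the operator $F(x,r,p,A) := \gf(p) = p_1^2 - p_2^2$ defined on $\cG := \R^2 \times \cM_{\gf}$, identifying the reduced boundary as $\partial^- \Omega = \partial \Omega \setminus \{x = a\}$. Since $\cG$ has constant fibers equal to $\cM_{\gf}$, it is trivially fiberegular and $\cM_{\gf}$-monotone, so only the four structural conditions on $F$ and the existence of a suitable strict $\cM_{\gf}$-approximator on $\Omega$ that blows down to $-\infty$ precisely on $\{x = a\} \cap \partial \Omega$ remain to be verified.

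Conditions (i) and (iv) of \Cref{thm:CP_MME} have essentially already been recorded in the text preceding the proposition: the $\cM_{\gf}$-monotonicity of $F$ is a direct consequence of G\aa rding's theory (both $\lambda_1^{\gf}$ and $\lambda_2^{\gf}$ are nonnegative and super-additive on $\overline{\Gamma}$, so their product $F = \lambda_1^{\gf}\lambda_2^{\gf}$ is monotone nondecreasing along $\overline{\Gamma}$-directions), while compatibility (iv) follows from \cite[Proposition~2.6]{HLGarding}. The regularity condition (ii) is immediate since $F$ is $x$-independent: for any fixed $J_0 \in \intr \cM_{\gf}$ and any $\eta > 0$, condition (i) already gives $F(J + \eta J_0) \geq F(J)$ for every $J \in \cG$, so \eqref{TCP2} holds with any $\delta > 0$. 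The non-emptiness (iii) holds because $0 \in \cM_{\gf}$ and $F(0) = 0$.

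For the strict approximator, I take $\psi(x,y) := 1/(a-x)$ as suggested in the preamble. A direct computation gives $D\psi = \bigl(1/(a-x)^2,\, 0\bigr)$, hence
\[
\lambda_1^{\gf}(D\psi(x,y)) = \frac{1}{(a-x)^2} > 0 \quad \text{on } \Omega,
\]
so $D\psi \in \intr \overline{\Gamma}$ pointwise; since $\cM_{\gf}$ imposes no constraint on the $r$ or $A$ slots, this shows that $\psi \in C^2(\Omega)$ is strictly $\cM_{\gf}$-subharmonic. Moreover $\psi$ is bounded above by $1/(a-b) < 0$ on $\Omega$, and extending it by $-\infty$ along $\{a\} \times [c,d]$ yields $\psi \in \USC(\overline{\Omega})$ with $\psi \equiv -\infty$ on $\partial \Omega \setminus \partial^-\Omega$, as required. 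Applying the parabolic version of \Cref{thm:CP_MME} then delivers the desired comparison. I do not foresee a genuine obstacle: the general correspondence-and-comparison machinery developed in \Cref{sec:correspondence} and the early part of \Cref{sec:examples} absorbs all the analytic difficulty, and the proof reduces to these elementary verifications on the specific polynomial $\gf$ and the explicit one-sided barrier $\psi$.
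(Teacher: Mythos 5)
Your proposal is correct and is essentially the paper's own argument: the paper likewise works with the constant-coefficient pair $F(x,r,p,A)=\gf(p)$ on $\cG=\R^2\times\cM_{\gf}$, invokes \cite[Proposition 2.6]{HLGarding} for compatibility, and applies the parabolic version of \Cref{thm:CP_MME} with the strict approximator $\psi(x,y)=1/(a-x)$, which is strictly $\cM_{\gf}$-subharmonic on $\Omega$ and tends to $-\infty$ precisely on $\{x=a\}\cap\partial\Omega$. The only slip is in your justification of condition (i): $\lambda_2^{\gf}$ is subadditive, not superadditive, on $\overline{\Gamma}$; what you need (and what holds) is the monotonicity of each G\aa rding eigenvalue along $\overline{\Gamma}$-directions, since $q_1\geq|q_2|$ and the triangle inequality give $\lambda_k^{\gf}(p+q)\geq\lambda_k^{\gf}(p)\geq 0$ for $k=1,2$ and $p,q\in\overline{\Gamma}$, whence $\gf(p+q)\geq\gf(p)$.
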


As in the previous examples, comparison principles for $u_x^2 - u_y^2 = f(x,y)$ could be deduced similarly. It is worth noting that equations of this form arise in the theory of zero-sum differential games. Though by no means general, we believe that this example well illustrates how the theory of hyperbolic polynomials and nonlinear potential theories may interact through a general notion of \noticina{slight linguistic change} monotonicity to yield comparison principles for a large class of nonlinear PDEs. To further emphasize the flexibility of G\aa rding theory, we notice that one can easily deduce results for inhomogeneous operators with a \textit{product structure} such as $F(x,r,p,A):= \gf_1(r)\gf_2(p)\gf_3(A) -  f(x)$, where $f \ge 0$ and $\gf_1, \gf_2, \gf_3$ are hyperbolic polinomials on $\R, \R^n, \cS(n)$ respectively. Indeed, each $\gf_i$ furnishes its own G\aa rding cone $\overline{\Gamma}_{i}$, and it is easy to check the monotonicity of $F$ with respect to $\cM := \overline{\Gamma}_1 \times \overline{\Gamma}_2 \times \overline{\Gamma}_3$.

\end{example}

\subsection{Examples of equations where standard structural conditions fail}\label{subsec:NSSC}

As a final consideration, we will present a class of proper elliptic operators with directionality for which our Theorem \ref{thm:CP_MME} applies to give the comparison principle, but for which the standard viscosity structural condition \cite[condition~(3.14)]{CIL92} on the operators fails to hold in general (see Proposition \ref{prop:fail} below). Simpler examples of variable coefficient pure second order operators have been discussed in \cite[Remark~5.10]{CP17}. 

The aforementioned condition (rewritten for $F(x,r,p,A)$ which is increasing in $A$, according to our convention) is
\begin{equation}\label{cil:3.14}
F(x,r,\alpha(x-y),A) - F(y,r,\alpha(x-y),B) \leq \omega(\alpha|x-y|^2 + |x-y|),
\end{equation}
for some modulus of continuity $\omega$, whenever $A,B \in \cS(n)$ satisfy
\begin{equation} \label{CABalpha}
-3\alpha \left(\begin{array}{cc}
I & 0 \\
0 & I 
\end{array}\right)
\leq 
\left(\begin{array}{cc}
A & 0 \\
0 & -B 
\end{array}\right)
\leq
3\alpha\left(\begin{array}{cc}
I & -I \\
-I & I 
\end{array}\right).
\end{equation}

\begin{example}[Perturbed Monge-Amp\`ere operators with directionality]\label{exe:PMAD}
On a bounded domain $\Omega \subset \R^n$, consider the operator defined by
\[
F(x,r,p,A) = F(x,p,A) \defeq \det\!\big(A + M(x,p)\big) - f(x), \ \ (x,r,p,A) \in \Omega \times \cJ^2
\]
with $f \in UC(\Omega; [0,+\infty))$ and with $M \in UC(\Omega \times \R^n; \call S(n))$ of the form
\begin{equation} \label{Mex1}
M(x,p) \defeq \pair{b(x)}{p} P(x)
\end{equation}
with $P \in UC(\Omega; \cP)$ and $b \in UC(\Omega; \R^n)$ such that
\begin{gather}
\label{condexcone}
\text{there exists a unit vector $\nu \in \R^n$ such that $\pair{b(x)}{\nu} \geq 0$ for each $x \in \Omega$.}
\end{gather}
Notice that the required uniform continuity for $f$ and $M$ holds if they are continuous on an open set $X$ for which $\Omega \ssubset X$. 

One associates to $F$ the candidate subequation with fibers
\[
\call F_x \defeq \big\{ J=(r,p,A) \in \call J^2 :\ A + M(x,p) \in \call P,\ F(x,J) \geq 0, \big\}, \quad x \in \Omega.
\]
which are clearly ($\R \times \{0\} \times \cP$)-monotone and hence one has properties (N) and (P) for $\cF$. In order to conclude that $\cF$ is indeed a subequation, by Theorem \ref{thm:MMSE}, it suffices to show that $\cF$ is fiberegular and $\cM$-monotone for some (constant coefficient) monotonicity cone subequation. To construct $\cM$ define the half-spaces
\[
H_{b(x)}^+ \defeq \big\{ q \in \R^n :\ \pair{b(x)}{q} \geq 0 \big\},
\]
and define the cone
\[
\call D \defeq \bigcap_{x \in \Omega} H^+_{b(x)} \neq \emptyset.
\]
This $\cD$ is a directional cone for $\call F$. Indeed $M(\cdot,p+q) \geq M(\cdot,p)$ for all $q \in \call D$. Hence $\cF$ is $\call M(\call D, \call P)$-monotone.

 Finally, $\call F$ is fiberegular, since it satisfies the third equivalent condition of \Cref{unifcontequiv}. In fact, for any fixed $J_0 = (r_0, p_0, A_0) \in \intr \call M(\call D, \call P)$ and any fixed $\eta > 0$ small,
\[\begin{split}
F(y,J+\eta J_0) - F(x,J) &= \det\!\big(A + \eta A_0 + M(y,p+\eta p_0)\big) - \det\!\big(A + M(y,p)\big) \\
&\quad + \det\!\big( A + M(y,p) \big) - \det\!\big( A + M(x,p) \big) \\
&\quad + f(x) - f(y) \\
\end{split}\]
is nonnegative if $|x-y|<\delta$, with $\delta = \delta(\eta) > 0$ sufficiently small, and thus $\call F_x + \eta J_0 \subset \call F_y$.

The comparison principle for $\cF$-admissible sub/supersolutions of the equation $F=0$ then follows from Theorem \ref{thm:CP_MME} since every bounded domain $\Omega$ admits a quadratic striclty $\call M(\call D, \call P)$-subharmonic function $\psi$ (as recalled in the proof of Proposition \ref{prop:OTE}) and $F$ satisfies the required conditions of the theorem (which we leave to the reader).
\end{example}

We now arrive to the main point of this subsection. While the comparison principle holds for the equation $F = 0$ of Example \ref{exe:PMAD}, if admissible ``perturbation coefficients'' $b(x)$ and $P(x)$ are chosen suitably, the standard viscosity condition \eqref{cil:3.14} fails to hold. For simplicity we give an example in dimension two, but generalizations are clearly possible.

\begin{prop}\label{prop:fail}
For some $x_0 \in \Omega \ssubset \R^2$, suppose that the perturbation coefficients $b \in UC(\Omega; \R^2)$ and $P \in UC(\Omega; \cP)$ satisfy:
\begin{equation}\label{bad_b}
\mbox{ $b$ has an isolated zero of order $\beta \in (0,3)$ at $x_0$;}
\end{equation}
\begin{equation}\label{bad_P}
\forall \, x \in \Omega, \quad P(x) \defeq \left(\begin{array}{cc}
h(x) & 0 \\
0 & 0
\end{array}\right) \quad \text{with} \quad h(x):= \frac{6g^2(x)}{|\pair{b(x)}{x_0 - x\,}|+g^2(x)},
\end{equation}
where $g \in UC(\Omega)$ is nonnegative and satisfies
\begin{equation}\label{bad_g}
\mbox{ $g$ has an isolated zero of order $\gamma \in \left(\frac{\beta + 1}{2},2\right)$ at $x_0$.}
\end{equation}
Then the condition \eqref{cil:3.14} fails to hold.
	\end{prop}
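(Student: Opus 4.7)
The plan is to refute \eqref{cil:3.14} by constructing a sequence of configurations $(x_k, y_k, \alpha_k, A_k, B_k)$ satisfying the matrix inequality \eqref{CABalpha} along which the left-hand side of \eqref{cil:3.14} diverges to $+\infty$, while $\alpha_k|x_k - y_k|^2 + |x_k - y_k| \to 0$. Since any admissible modulus of continuity $\omega$ satisfies $\omega(0^+) = 0$, this yields the desired contradiction.

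\textbf{Setup.} Work in coordinates with $x_0 = 0$. By the isolated-zero-of-order-$\beta$ hypothesis on $b$, one can select a unit vector $e_1$ and a sequence $t = t_k \to 0^+$ so that $\langle b(t e_1), e_1\rangle \geq c_b\, t^\beta$ for some $c_b > 0$ independent of $k$ (extract a subsequence along which $b/|b|$ converges, and let $e_1$ be any unit vector not orthogonal to that limit). For each such $t$ set
\[
x := 0, \qquad y := t e_1, \qquad \alpha := t^{-\sigma}, \qquad A := 0, \qquad B := \begin{pmatrix} 0 & 0 \\ 0 & 3\alpha \end{pmatrix},
\]
where $\sigma \in (\gamma, 2)$ is fixed; this interval is non-empty by the upper bound $\gamma < 2$ in \eqref{bad_g}. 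For this explicit $(A,B)$, the matrix inequality \eqref{CABalpha} reduces to the elementary pointwise inequality $3\alpha|v - w|^2 + 3\alpha w_2^2 \geq 0$ for $(v,w) \in \R^4$, which is trivial.

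\textbf{Key estimate.} Let $p := \alpha(x - y) = -\alpha t\, e_1$. Since $b(x_0) = 0$, we have $M(x, p) = 0$ and hence $\det(A + M(x, p)) = 0$. At $y$, the formula \eqref{bad_P} together with $|b(y) \cdot (x_0 - y)| \sim c_b\, t^{\beta + 1}$ and $g^2(y) \sim c_g^2\, t^{2\gamma}$, and the lower bound $2\gamma > \beta + 1$ from \eqref{bad_g}, give
\[
h(y) \sim \frac{6 c_g^2}{c_b}\, t^{2\gamma - \beta - 1}, \qquad \langle b(y), p\rangle \cdot h(y) \sim -6\, c_g^2\, \alpha\, t^{2\gamma}.
\]
Consequently $\det(B + M(y, p)) = 3\alpha\, \langle b(y), p\rangle\, h(y) \sim -18\, c_g^2\, \alpha^2\, t^{2\gamma}$, and using the uniform continuity of $f$,
\[
F(x, p, A) - F(y, p, B) = -\det\bigl(B + M(y, p)\bigr) + \bigl(f(y) - f(x_0)\bigr) \sim 18\, c_g^2\, \alpha^2\, t^{2\gamma}.
\]

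\textbf{Conclusion and main obstacle.} Substituting $\alpha = t^{-\sigma}$, the LHS has order $t^{2(\gamma - \sigma)} \to +\infty$ since $\sigma > \gamma$, while $\alpha|x - y|^2 + |x - y| = t^{2-\sigma} + t \to 0$ since $\sigma < 2$; hence $\omega(\alpha|x - y|^2 + |x - y|) \to 0$ for every modulus $\omega$, contradicting \eqref{cil:3.14}. The main technical subtlety is the directional selection of $e_1$ so that $\langle b(\cdot), e_1\rangle$ inherits the sharp rate $\sim t^\beta$ from the isolated-zero hypothesis; once this is in hand, the two bounds on $\gamma$ in \eqref{bad_g} play dual roles, with $\gamma > (\beta+1)/2$ making $|b \cdot (x_0 - y)|$ dominate $g^2$ in the denominator of $h$ (producing the sharp growth $\alpha^2 t^{2\gamma}$) and $\gamma < 2$ leaving room for an exponent $\sigma \in (\gamma, 2)$ making this growth outpace the quadratic scale $\alpha t^2 = t^{2-\sigma}$.
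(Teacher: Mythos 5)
Your overall strategy is the same as the paper's: produce configurations satisfying \eqref{CABalpha} along which the left-hand side of \eqref{cil:3.14} stays large while $\alpha|x-y|^2+|x-y|\to 0$. Your specific choices ($A=0$, $B=\mathrm{diag}(0,3\alpha)$, $\alpha=t^{-\sigma}$ with $\sigma\in(\gamma,2)$, evaluation points $x_0$ and $y$) are admissible, the verification of \eqref{CABalpha} is correct, and the determinant asymptotics would even give a stronger conclusion than the paper's (LHS $\to+\infty$ rather than a limit equal to $1$, the paper instead tying $\alpha_n=(3g(y_n))^{-1}$ and $2A_n=B_n=\mathrm{diag}(0,g(y_n)^{-1})$). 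However, there is a genuine gap at the very first step, the ``directional selection''. You need a unit vector $e_1$ and $t_k\downarrow 0$ with $\langle b(x_0+t_ke_1),e_1\rangle\geq c_b\,t_k^{\beta}$, i.e.\ a quantitatively positive \emph{radial} component of $b$ along the ray through $e_1$. Your justification (extract a subsequence along which $b/|b|$ converges and take $e_1$ not orthogonal to the limit) only controls $\langle b(z_k),e_1\rangle$ at the points $z_k$ of that subsequence, which need not lie on the ray $\{x_0+te_1\}$ where you subsequently evaluate $b$; nothing transfers the bound to those points. Worse, the statement you need is false under the hypotheses you actually invoke: for $b(x)=|x-x_0|^{\beta-1}(x-x_0)^{\perp}$, which is uniformly continuous with an isolated zero of order $\beta$, one has $\langle b(x_0+te),e\rangle\equiv 0$ for every unit vector $e$, so no such $e_1$, $c_b$ exist. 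What rules this field out is the structural condition \eqref{condexcone} from Example \ref{exe:PMAD}, which the paper's proof invokes explicitly to choose points $y_n$ with $\pair{b(y_n)}{x_0-y_n}>0$; your argument never uses \eqref{condexcone}, so it cannot be complete.

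Note also that mere positivity of the radial pairing would not suffice for your asymptotics: the key term is of size $\alpha^2\min\bigl(t\langle b(y),e_1\rangle,\,g^2(y)\bigr)$ up to constants, so you need the pairing to be at least comparable to $g^2(y)\approx t^{2\gamma}$ along the chosen sequence (your claimed rate $c_bt^{\beta}$, combined with $2\gamma>\beta+1$, is one sufficient way). Extracting such a quantitative lower bound is exactly where \eqref{condexcone} and the two-sided bound $|b(x)|\approx|x-x_0|^{\beta}$ must be combined, and this is the real technical content of the selection step that is missing from your proposal; the paper avoids the ray issue by fixing the points $y_n$ first and pairing with the direction $x_0-y_n$, whose sign is controlled by \eqref{condexcone}. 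Until this selection is justified, the refutation does not go through, even though the algebraic and scaling parts of your construction are sound.
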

Before giving the proof, we should note that the function $h$ in \eqref{bad_P} is not actually defined in $x = x_0$, but since $g^2$ has an isolated zero of order $2 \gamma > \beta + 1 > 1$, $h$ extends continuously by defining $h(x_0) = 0$.
\begin{proof} The idea is to exploit the order of the isolated zeros of $b$ and $g$ in $x_0$ to take a suitable sequence $\{y_n\} \subset \Omega$ converging to $x_0$, along which one can find sequences of matrices $\{A_n\}$ and $\{B_n\}$ satisfying \eqref{CABalpha} which contradict the validity of the inequality\eqref{cil:3.14}.

Consider any sequence $\{y_n\}_{n \in \N} \subset \Omega$ such that $y_n \to x_0$ with $b(y_n) \neq 0 \in \R^2$ ($b$ has an isolated zero in $x_0$) and such that
\[
\pair{b(y_n)}{x_0 - y_n} > 0 \quad \forall n \in \N.
\]
Such a choice is possible thanks to condition~\eqref{condexcone}. The desired matrices are defined by
\[
2A_n = B_n \defeq \left(\begin{array}{cc}
0 & 0 \\
0 & g(y_n)^{-1}
\end{array}\right), \quad n \in \N.
\]
Each pair $A_n, B_n$ satisfies (\ref{CABalpha}) with $\alpha = \alpha_n \defeq (3g(y_n))^{-1}$, as one easily verifies. 

By contradiction, assume that \eqref{cil:3.14} holds. Along the sequence $(y_n, A_n, B_n)$ one would have, as $y_n \to x_0$,
\[
F(y_n, \alpha_n(x_0-y_n), A_n) - F(x_0,\alpha_n(x_0-y_n), B_n) \leq \omega\!\left( \frac{|y_n-x_0|^2}{3g(y_n)} + |y_n - x_0| \right) \longrightarrow 0,
\]
but
\begin{equation*}\begin{split}
&F(y_n, \alpha_n(x_0-y_n), A_n) - F(x_0,\alpha_n(x_0-y_n), B_n) \\
& \qquad = \left|\begin{array}{cc}
\frac13 g(y_n)^{-1} \pair{b(y_n)}{x_0-y_n} h(y_n) & 0 \\
0 & \frac12 g(y_n)^{-1}
\end{array}\right|
- f(y_n)
- \left|\begin{array}{cc}
0 & 0 \\
0 & g(y_n)^{-1}
\end{array}\right|
+ f(x_0) \\
&\qquad = \frac{\pair{b(y_n)}{x_0-y_n}}{\pair{b(y_n)}{x_0-y_n} + g(y_n)^2} + o(1) \longrightarrow 1,
\end{split}\end{equation*}
thus leading to a contradiction.
\end{proof}

\begin{appendix}

\section{Monotonicity, fiberegularity and topological stability}\label{AppA}

Given a fiberegular subequation $\cF \subset \cJ^2(X) = X \times \cJ^2$ on an open set $X$ in $\R^n$ which is $\cM$-monotone for some constant coefficient monotonicity cone subequation $\cM$, one knows that the fiber map
$$
\Theta \colon (X, |\cdot|) \to \call (\scK(\call J^2), d_{\scr H})\ \ \text{defined by} \ \  \Theta(x):= \cF_x,\ \ x \in X
$$
is continuous (taking values in the closed subsets $\scK(\cJ^2)$) and $\cM$-monotone in the sense that
\begin{equation}\label{MM_Theta}
\Theta(x) + \cM \subset \Theta(x), \ \ \forall \, x \in X.
\end{equation}
We address here the converse; that is, given a continuous $\cM$-monotone map 
\begin{equation}\label{MMM}
\Theta \colon (X, |\cdot|) \to \call (\scK(\call J^2), d_{\scr H}),
\end{equation}
is it true that
\begin{equation}\label{F_is_SE} 
\cF \subset \cJ^2(X) \ \text{with} \ \cF_x := \Theta(x) \ \text{for all} \ x \in X \ \ \Longrightarrow \ \ \cF \ \text{is a subequation}? 
\end{equation}
The question is 
important in light of the Correspondence Principle of Theorem \ref{thm:corresp_gen}; that is, given a proper elliptic pair $(F, \cG)$ one wants to know whether 
$\cF$ having fibers 
$$
	\cF_x = \{ J \in \cG_x \subset \cJ^2: F(x,J) \geq 0 \}
$$ 
implies that $\cF$ is a subequation (and hence a well developed potential theory). 

If $\Theta$ is a continuous $\cM$-monotone map, then by definition the fibers of $\cF_x := \Theta(x)$ are closed and one has $\cM$-monotonicity
$$
	\cF_x + \cM \subset \cF_x , \ \ \forall \, x \in X,
$$
which implies properties (P) and (N) since $\cM_0 := \cN \times \{0\} \times \cP \subset \cM$. This leaves the topological property (T), which we recall is the triad
\begin{equation}\label{T1} \tag{T1}
\cF = \overline{\intr \cF};
\end{equation}
\begin{equation}\tag{T2} \label{T2}
\cF_x = \overline{\intr \left( \cF_x \right)}, \ \ \forall \, x \in X;
\end{equation}
\begin{equation}\tag{T3} \label{T3}
\left( \intr \cF \right)_x = \intr  \left( \cF_x \right), \ \ \forall \, x \in X.
\end{equation}

In the case of constant coefficient subequations, the triad reduces to \eqref{T1}; that is, to $\call F$ being a regular closed set, and it is known~\cite{HL09, CHLP22} that such condition is equivalent to the reflexivity of the Dirichlet dual
\[
\tildee{\call F} \defeq (-\intr\call F)\compl.
\]
This is essentially a consequence of the fact that, for any open set $\call O$, the set $\call S= \bar{\call O}$ is regular closed. 

When $\call F$ has variable coefficients, as noted in~\cite{HL11}, the two conditions \eqref{T2}-\eqref{T3} involving the fibers are useful in order to be allowed to compute the dual fiberwise; that is, in order to have the following equality:
\[
\tildee{\call F } = \bigsqcup_{x\in X}(-\intr \call F_x)\compl \defeq \bigcup_{x\in X} \{x\} \times (-\intr \call F_x)\compl.
\]
Therefore it is easy to see that the full topological condition (T) (that is, conditions \eqref{T1}-\eqref{T3} together) yields the reflexivity of Dirichlet duals of variable coefficient subequations as well.

Some facts concerning the topological conditions are in order.

\begin{prop} \label{int=int}
Let $\call F \subset X \times \call J^2$; then \eqref{T3} holds if and only if
\begin{equation} \tag{T3$'$}\label{T3'}
\intr \call F = \bigsqcup_{x \in X} \intr \call F_x.
\end{equation}
\end{prop}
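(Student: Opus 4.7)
The plan is to observe that the equality (T3$'$) between subsets of $X\times\call J^2$ is equivalent, via slicing at an arbitrary point $x\in X$, to the family of fiberwise equalities (T3). So the whole proposition reduces to a careful bookkeeping with fibers.

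First I would spell out the two sides of (T3$'$) in terms of fibers. For any subset $\call S \subset X\times\call J^2$ and any $x\in X$, the fiber $(\call S)_x \defeq \{J\in\call J^2 : (x,J)\in\call S\}$ is well defined, and one has the tautology $\call S = \bigsqcup_{x\in X}(\call S)_x$. Applied to the left-hand side of (T3$'$) this gives $\intr\call F = \bigsqcup_{x\in X}(\intr\call F)_x$. The right-hand side of (T3$'$) is by definition $\bigsqcup_{x\in X}\intr(\call F_x)$. Thus (T3$'$) is precisely
\[
\bigsqcup_{x\in X}(\intr\call F)_x \;=\; \bigsqcup_{x\in X}\intr(\call F_x).
\]
Since the unions on both sides are disjoint (indexed by the same base $X$), equality of the two sets is equivalent to equality of the corresponding $x$-slices for every $x\in X$, which is exactly (T3).

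For the ``(T3) $\Rightarrow$ (T3$'$)'' direction: given $(x,J)\in\intr\call F$ we have $J\in(\intr\call F)_x=\intr(\call F_x)$ by (T3), hence $(x,J)\in\bigsqcup_{x\in X}\intr(\call F_x)$; conversely, given $(x,J)$ with $J\in\intr(\call F_x)=(\intr\call F)_x$, we have $(x,J)\in\intr\call F$. For ``(T3$'$) $\Rightarrow$ (T3)'': fix $x_0\in X$ and intersect both sides of (T3$'$) with $\{x_0\}\times\call J^2$; the left side yields $\{x_0\}\times(\intr\call F)_{x_0}$ and the right side yields $\{x_0\}\times\intr(\call F_{x_0})$, whence $(\intr\call F)_{x_0}=\intr(\call F_{x_0})$.

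There is no real obstacle here; the only thing worth being pedantic about is the notational convention that $\bigsqcup_{x\in X}\Phi(x)$ stands for $\bigcup_{x\in X}\{x\}\times\Phi(x)$, so that the statement (T3$'$) is literally a set equality in $X\times\call J^2$ and slicing is legitimate. Once that is in place, the proof is a one-line bookkeeping argument.
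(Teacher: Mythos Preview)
Your proof is correct and takes essentially the same approach as the paper: both arguments rest on the tautology $\call S = \bigsqcup_{x\in X}(\call S)_x$ applied to $\call S = \intr\call F$, reducing (T3$'$) to the fiberwise equalities (T3). Your presentation is in fact more streamlined than the paper's, which routes the converse implication through the auxiliary observation that the inclusion $(\intr\call F)_x \subset \intr(\call F_x)$ always holds and then translates the remaining inclusion via an intermediate equivalence; your direct slicing argument bypasses this detour.
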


\begin{proof}
Equality\eqref{T3'} straightforwardly implies condition \eqref{T3}. For the converse implication, one first notes that the inclusion
\begin{equation} \label{trivial}
(\intr \call F)_x \subset \intr (\call F_x),
\end{equation}
always holds, so that
\begin{equation} \label{2=sup}
(\intr \call F)_x = \intr (\call F_x) \quad\iff\quad (\intr \call F)_x \supset \intr (\call F_x);
\end{equation}
furthermore, since
\begin{equation*} \label{xifx}
\{x\} \times (\intr \call F)_x = \intr \call F \cap \left(\{x\} \times  \call F_x \right)   =  \left( \intr \bigsqcup_{y\in X} \call F_y\right) \cap \big(\{x\} \times  \call F_x \big),
\end{equation*}
we have that
\begin{equation} \label{precond2}
(\intr \call F)_x \supset \intr\call F_x \quad\iff\quad \intr \left( \bigsqcup_{y\in X} \call F_y \right) \supset \{x\} \times \intr (\call F_x).
\end{equation}
Hence, combining (\ref{2=sup}) and (\ref{precond2}) yields
\begin{equation} \label{cond2}
(\intr \call F)_x = \intr  (\call F_x) \quad\iff\quad \intr \left( \bigsqcup_{y\in X} \call F_y \right) \supset \{x\} \times \intr (\call F_x).
\end{equation}
By (\ref{cond2}), one has the inclusion $\supset$ in \eqref{T3'}, while the opposite one is trivial by (\ref{trivial}).
\end{proof}

\begin{prop} \label{(C)nec}
Let $\call F \subset X \times \call J^2$ be closed. Then \emph{
\[
\eqref{T2} \ \text{and} \ \eqref{T3} \implies \eqref{T1}. 
\]
}
\end{prop}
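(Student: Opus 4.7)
The plan is to prove the nontrivial inclusion $\call F \subset \overline{\intr \call F}$ (the reverse inclusion $\overline{\intr \call F} \subset \call F$ being immediate from the closedness of $\call F$) by approximating any point of $\call F$ along its fiber, keeping the base point fixed. The key observation is that conditions \eqref{T2} and \eqref{T3} together say exactly that the fibers of $\intr \call F$ are dense in the corresponding fibers of $\call F$.

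More precisely, I would argue as follows. Fix an arbitrary point $(x_0, J_0) \in \call F$, so that $J_0 \in \call F_{x_0}$. By \eqref{T2} applied at $x_0$, we have $\call F_{x_0} = \overline{\intr(\call F_{x_0})}$, hence there exists a sequence $\{J_n\}_{n \in \N} \subset \intr(\call F_{x_0})$ with $J_n \to J_0$ in $\call J^2$. By \eqref{T3} applied at $x_0$, one has $\intr(\call F_{x_0}) = (\intr \call F)_{x_0}$, and therefore $(x_0, J_n) \in \intr \call F$ for every $n$. Since $(x_0, J_n) \to (x_0, J_0)$ in $X \times \call J^2$, we conclude $(x_0, J_0) \in \overline{\intr \call F}$. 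This yields $\call F \subset \overline{\intr \call F}$, and combined with the trivial reverse inclusion gives \eqref{T1}.

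There is essentially no obstacle here: the argument is a direct bookkeeping exercise showing that fiberwise density of the interior (from \eqref{T2}) lifts to total density once one knows that points of fiber-interiors actually lie in the total interior (from \eqref{T3}). The closedness hypothesis on $\call F$ is used only for the easy inclusion $\overline{\intr \call F} \subset \call F$. Note that neither \eqref{T2} nor \eqref{T3} alone suffices: \eqref{T2} provides approximants in $\intr(\call F_{x_0})$, but without \eqref{T3} these need not belong to $\intr \call F$; conversely, \eqref{T3} relates the two interiors but without \eqref{T2} gives no mechanism to approximate a general $J_0 \in \call F_{x_0}$ by such interior points.
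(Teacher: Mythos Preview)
Your proof is correct and follows essentially the same approach as the paper's: both establish the nontrivial inclusion $\call F \subset \overline{\intr \call F}$ by using \eqref{T2} for fiberwise density and \eqref{T3} to lift fiber-interior points into $\intr \call F$. The paper phrases this as a chain of set inclusions (invoking the equivalent reformulation \eqref{T3'} of \eqref{T3} from \Cref{int=int}) rather than a pointwise sequential argument, but the content is identical.
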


\begin{proof}
On always has the inclusion $\bar{\intr \call F} \subset \call F$. On the other hand, assuming \eqref{T2} and \eqref{T3},
\[
\call F = \bigsqcup_{x \in X} \call F_x = \bigsqcup_{x\in X} \bar{\intr (\call F_x)} \subset \bar{\bigsqcup_{x\in X} {\intr (\call F_x)} } = \bar{\intr \call F},
\]
where we also used \Cref{int=int} for the last equality.
\end{proof}

This shows that an equivalent formulation of property (T) would be to ask that $\cF$ is closed and that \eqref{T2} and \eqref{T3} hold. 

As for \eqref{T2} and \eqref{T3}, it is easy to see that, in general, closed $\call M_0$-monotone subsets of $X \times \call J^2$ do not satisfy it. However, if one has more monotonicity, then that could be enough in order to guarantee 
\begin{equation}\tag{T2}
\call F_x =\bar{\intr (\call F_x)} \qquad \forall x \in X.
\end{equation}
For instance, the following holds.

\begin{prop} \label{mont2}
Suppose that $\call F \subset X \times \call J^2$ has closed fibers and suppose that there exists a subset $\call M \subset X \times \call J^2$ where $\cM$ satisfies property \eqref{T2} and 
\[
\call F + \call M = \call F.
\]
Then $\call F$ satisfies property \eqref{T2}.
\end{prop}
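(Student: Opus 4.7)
The plan is to establish the nontrivial inclusion $\cF_x \subseteq \overline{\intr \cF_x}$ for each $x \in X$, since the reverse inclusion follows at once from the closedness of $\cF_x$. So I fix $x \in X$ and an arbitrary $J \in \cF_x$, and aim to exhibit a sequence in $\intr \cF_x$ converging to $J$.

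The first step will exploit the hypothesis $\cF_x + \cM_x = \cF_x$ in its nontrivial direction, $\cF_x \subseteq \cF_x + \cM_x$: this yields a decomposition $J = J' + M'$ with $J' \in \cF_x$ and $M' \in \cM_x$. The second step invokes property \eqref{T2} for $\cM$, in the form $\cM_x = \overline{\intr \cM_x}$, to produce a sequence $\{M_n\}_{n \in \N} \subset \intr \cM_x$ with $M_n \to M'$. The natural candidate approximating sequence is then $J_n \defeq J' + M_n$, which converges to $J' + M' = J$.

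The third step is to check that $J_n \in \intr \cF_x$ for every $n$, which is where the interior of $\cM_x$ gets transferred to that of $\cF_x$: since $M_n \in \intr \cM_x$ there exists $r_n > 0$ with $B_{r_n}(M_n) \subset \cM_x$, hence
\[
B_{r_n}(J' + M_n) = J' + B_{r_n}(M_n) \subset J' + \cM_x \subset \cF_x + \cM_x = \cF_x,
\]
so that $J' + M_n \in \intr \cF_x$. Combined with the previous step, this gives $J \in \overline{\intr \cF_x}$ and completes the argument.

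The main conceptual subtlety, and the only place where one could plausibly get stuck, is the choice of decomposition in the first step. A naive attempt to approximate $J$ by perturbations of the form $J + M_n$ with $M_n \to 0$ would require $0 \in \cM_x$, which is not among the hypotheses. Splitting $J = J' + M'$ shifts the role of the limit from $0$ to $M' \in \cM_x$, which by \eqref{T2} lies in $\overline{\intr \cM_x}$ and is therefore approximable from inside $\cM_x$; it is precisely the full equality $\cF + \cM = \cF$, rather than the one-sided monotonicity $\cF + \cM \subseteq \cF$, that makes this decomposition available and the bypass work.
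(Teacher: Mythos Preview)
Your proof is correct and follows essentially the same approach as the paper's. The paper compresses your argument into a single chain of set inclusions
\[
\cF_x = \cF_x + \cM_x = \cF_x + \overline{\intr(\cM_x)} \subset \overline{\cF_x + \intr(\cM_x)} \subset \overline{\intr(\cF_x + \cM_x)} = \overline{\intr(\cF_x)} \subset \cF_x,
\]
but the content is identical: the decomposition $J = J' + M'$ corresponds to the first equality, your approximation $M_n \to M'$ from $\intr \cM_x$ is the use of (T2) for $\cM$, and your ball argument showing $J' + M_n \in \intr \cF_x$ is exactly the inclusion $\cF_x + \intr(\cM_x) \subset \intr(\cF_x + \cM_x)$.
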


\begin{proof}
One has
\[
\call F_x = \call F_x + \call M_x = \call F_x + \bar{\intr (\call M_x)} \subset \bar{\call F_x + \intr (\call M_x)} \subset \bar{\intr (\call F_x + \call M_x)} = \bar{\intr (\call F_x)} \subset \call F_x,
\] 
hence all the inclusions (and in particular the last one) are in fact equalities.
\end{proof}

\begin{remark} \label{rmkmont2}
A situation in which the hypotheses of \Cref{mont2} are satisfied is that of $\call F$ being $\call M$-monotone for some regular closed subset $\call M \subset \call J^2$ such that $0 \in \call M$. For example, this holds if $\cF$ is $\cM$ monotone for a constant coefficient monotonicity cone subequation. 
\end{remark}

Condition \eqref{T3} requires a little more attention because it is the only one that relates the interior with respect to $X \times \call J^2$ and the interior with respect to $\call J^2$. To stress this fact, let us write
\begin{equation}\tag{T3} \label{tau11}
\big(\intr_{X \times \call J^2} \call F\big)_x = \intr_{\call J^2} \call F_x.
\end{equation}

This condition seems to be related to some sort of continuity of the fiber $\call F_x$ with respect to the point $x$. Here we prove that if $\cF \subset \cJ^2(X)$ has fibers determined by a continuous $\cM$-monotone map $\Theta$, then $\cF$ satisfies  \eqref{T3}.

\begin{prop} \label{contt1}
	Suppose that $\cF \subset  \cJ^2(X)$ has fibers $\cF_x := \Theta(x)$ where the fiber map $\Theta \colon (X, |\cdot|) \to \call (\scK(\call J^2), d_{\scr H})$ is continuous and $\call M$-monotone for some constant coefficient monotonicity cone subequation $\call M$. Then $\cF$ satifies the topological property \eqref{T3}.
\end{prop}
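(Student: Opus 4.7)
The plan is to prove the only nontrivial inclusion $\intr_{\cJ^2}(\cF_x) \subset (\intr_{\cJ^2(X)} \cF)_x$; the reverse inclusion $(\intr \cF)_x \subset \intr(\cF_x)$ holds for any $\cF \subset X \times \cJ^2$ simply because if $U \times V \subset \cF$ is a product neighborhood of $(x,J)$ with $V$ open in $\cJ^2$, then $V \subset \cF_x$. So fix $x_0 \in X$ and $J \in \intr \Theta(x_0)$: the goal is to produce a neighborhood $B_\delta(x_0) \times \call B_\varepsilon(J) \subset \cF$, which will mean $J \in (\intr \cF)_{x_0}$.

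First I would pick an auxiliary jet $J_0 \in \intr \cM$, which is nonempty since $\cM$ is a subequation. Then I would select $\rho > 0$ so that the Euclidean ball $\call B_\rho(J) \subset \Theta(x_0)$ (possible since $J \in \intr \Theta(x_0)$), and pick $\eta > 0$ small enough that $\eta \trinorm{J_0} < \rho/2$. Next, using the equivalent formulation (c) of fiberegularity given in Proposition~\ref{unifcontequiv} applied on some fixed $\Omega \ssubset X$ with $x_0 \in \Omega$, this choice of $\eta$ produces $\delta = \delta(\eta, \Omega) > 0$ such that
\[
\Theta(x_0) + \eta J_0 \subset \Theta(y) \quad \text{for every } y \in B_\delta(x_0).
\]
I would then shrink $\delta$ if necessary so that $B_\delta(x_0) \subset \Omega$, and set $\varepsilon \defeq \rho/2$.

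The verification that $B_\delta(x_0) \times \call B_\varepsilon(J) \subset \cF$ is then a short calculation. Given $(y, J')$ in this neighborhood, write $J' = (J' - \eta J_0) + \eta J_0$. By the triangle inequality,
\[
\trinorm{(J' - \eta J_0) - J} \leq \trinorm{J' - J} + \eta \trinorm{J_0} < \varepsilon + \rho/2 = \rho,
\]
so $J' - \eta J_0 \in \call B_\rho(J) \subset \Theta(x_0)$. The fiberegularity inclusion above then yields
\[
J' = (J' - \eta J_0) + \eta J_0 \in \Theta(x_0) + \eta J_0 \subset \Theta(y) = \cF_y,
\]
i.e.\ $(y,J') \in \cF$. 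This shows $(x_0, J) \in \intr \cF$, hence $J \in (\intr \cF)_{x_0}$, completing the proof.

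The main (and really only) obstacle is ensuring that the chosen $\varepsilon$-neighborhood in $\cJ^2$ works \emph{uniformly} for all $y$ in a fixed $\delta$-neighborhood of $x_0$; the two parameters $\varepsilon$ and $\delta$ must be chosen in the right order, and the key point is that the $\cM$-monotonicity allows one to trade a small perturbation in the base variable (controlled by $\delta$) for a translation by $\eta J_0$ in the fiber, which can then be absorbed into the interior ball around $J$ since $J_0 \in \intr \cM$ and $J \in \intr \Theta(x_0)$.
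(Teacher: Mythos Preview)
Your proof is correct and takes essentially the same approach as the paper: both fix $J \in \intr \Theta(x_0)$, choose a ball $\call B_\rho(J) \subset \Theta(x_0)$, and then invoke the fiberegularity characterization of Proposition~\ref{unifcontequiv}(c) (trading a small base perturbation for a fiber translation by $\eta J_0$) to produce a product neighborhood $B_\delta(x_0) \times \call B_\varepsilon(J) \subset \cF$. Your version handles the normalization of $J_0$ slightly more explicitly than the paper, which tacitly takes $\trinorm{J_0} \leq 1$ when asserting $J'_x - \rho J_0 \in \call B_{2\rho}(J_x)$.
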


\begin{proof}
By (\ref{cond2}), it suffices to prove that 
\begin{equation} \label{suff1}
\{x\} \times \intr \Theta(x) \subset \intr \bigsqcup_{y\in X} \Theta(y) \qquad \forall x \in X. 
\end{equation} 
Fix $x \in X$ and $J_x \in \intr \Theta(x)$ and let $\rho > 0$ such that $\call B_{2\rho}(J_x) \subset \Theta(x)$, where $\call B$ denotes the ball in $\call J^2$ with respect to the norm $\trinorm\cdot$. Let $J'_x \in \call B_\rho(J_x)$, so that 
\[
J'_x - \rho J_0 \subset \call B_{2\rho}(J_x) \subset \Theta(x),
\]
for some $J_0 \in \intr \call M$ fixed. By \Cref{unifcontequiv}{\it(c)}, there exists $\delta > 0$ such that
\[
J'_x = \left( J'_x - \rho J_0 \right) + \rho J_0 \in \Theta(y) \qquad \forall y \in B_\delta(x).
\]
This proves that
\[
B_\delta(x) \times \{ J'_x \} \subset \bigsqcup_{|y-x|<\delta} \Theta(y) \qquad \forall J'_x \in \call B_\rho(J_x),
\]
hence
\[
B_\delta(x) \times \call B_\rho(J_x) \subset \bigsqcup_{|y-x|<\delta} \Theta(y).
\]
It follows that
\[
(x,J_x) \in B_\delta(x) \times \call B_\rho(J_x) \subset \intr \bigsqcup_{|y-x|<\delta} \Theta(y),
\]
and since $J_x \in \intr\Theta(x)$ is arbitrary, this proves that
\[
\{x \} \times \intr \Theta(x) \subset \intr \bigsqcup_{|y-x|<\delta} \Theta(y) \subset \intr \bigsqcup_{y \in X} \Theta(y), 
\]
and thus, since $x \in X$ is arbitrary, (\ref{suff1}) follows, as desired.
\end{proof}

Finally, the continuity of $\Theta$ also implies that $\call F$ is closed.
\begin{prop} \label{contclos}
Suppose that there exists $\Theta$ as above (not necessarily $\call M$-monotone). Then $\call F$ is closed (in $X \times \call J^2$).
\end{prop}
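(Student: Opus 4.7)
The plan is to verify closedness directly from the sequential characterization. Take an arbitrary sequence $\{(x_n, J_n)\}_{n \in \N} \subset \call F$ converging to some $(x_0, J_0) \in X \times \call J^2$; that is, $x_n \to x_0$ in $X$ and $J_n \to J_0$ in $\call J^2$. By the definition of $\call F$ in terms of $\Theta$, we have $J_n \in \Theta(x_n)$ for each $n$, and the goal is to show that $J_0 \in \Theta(x_0)$.

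First, I would invoke the assumed Hausdorff continuity of $\Theta$ at $x_0$: since $x_n \to x_0$, we have $d_{\scr H}(\Theta(x_n), \Theta(x_0)) \to 0$. By the formula \eqref{hausdistform} for the Hausdorff distance, in particular the ``one-sided'' sup--inf term, for every $J \in \Theta(x_n)$ one has
\[
\inf_{J' \in \Theta(x_0)} \trinorm{J - J'} \leq d_{\scr H}(\Theta(x_n), \Theta(x_0)).
\]
Applied to $J = J_n$, this furnishes, for each $n$, an element $J'_n \in \Theta(x_0)$ with
\[
\trinorm{J_n - J'_n} < d_{\scr H}(\Theta(x_n), \Theta(x_0)) + \tfrac1n \longrightarrow 0.
\]

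Next I would combine this with the convergence $J_n \to J_0$: by the triangle inequality, $\trinorm{J'_n - J_0} \leq \trinorm{J'_n - J_n} + \trinorm{J_n - J_0} \to 0$, so $J'_n \to J_0$ in $\call J^2$. Since $\Theta$ takes values in $\scr K(\call J^2)$, the set $\Theta(x_0)$ is closed in $\call J^2$, and hence the limit $J_0$ of the sequence $\{J'_n\} \subset \Theta(x_0)$ lies in $\Theta(x_0)$. Therefore $(x_0, J_0) \in \call F$, proving that $\call F$ is closed.

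There is no substantive obstacle here: the entire argument is packaged by two uses of the Hausdorff-distance definition together with the fact that each fiber is closed by hypothesis. The only minor point to keep in mind is that we are working in $X \times \call J^2$ (with $X$ open), so the candidate limit point $(x_0, J_0)$ is assumed a priori to have $x_0 \in X$; no extension of $\Theta$ to $\partial X$ is needed.
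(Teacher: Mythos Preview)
Your proof is correct and follows essentially the same approach as the paper's. The paper invokes a cited characterization of Hausdorff limits (namely, that $\Theta(\bar x)$ consists exactly of limits of sequences $J'_k \in \Theta(x_k)$), whereas you unpack this directly by using the one-sided Hausdorff estimate to produce an approximating sequence $\{J'_n\} \subset \Theta(x_0)$ and then appeal to closedness of the fiber; the underlying idea is identical, with your version being slightly more self-contained.
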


\begin{proof}
Let $(\bar x, \bar J) \in \bar{\call F}$. Then $\bar x \in X$ and there exist sequences $x_k \to \bar x$ and $J_k \to \bar J$ such that $J_k \in \Theta(x_k)$ for all $k \in \N$. Since by continuity $\Theta(\bar x) = \lim_{x_k \to \bar x} \Theta(x)$, where the limit is computed with respect to the Hausdorff distance $d_{\scr H}$, it is known (cf.~\cite[Exercise~7.4.3.1]{BBI01}) that 
\[
\Theta(\bar x) = \{ J' \in \call J^2 :\ \exists \{J'_k\}_{k\in\N}\ \text{such that}\ J'_k \in \Theta(x_k)\ \forall k \in \N\ \text{and}\ J'_k \to J'\}.
\]
Hence $\bar J \in \Theta(\bar x)$, yielding $(\bar x, \bar J) \in \call F$.
\end{proof}

We now can affirm that the answer to the question \eqref{F_is_SE} is yes.

\begin{thm}\label{thm:MMSE}
Let $\Theta$ be a continuous and $\call M$-monotone map on $X$, for some constant coefficient monotonicity cone subequation $\call M$; define
\begin{equation*} \label{ffromth}
\call F \defeq \bigsqcup_{x\in X} \Theta(x).
\end{equation*}
Then $\call F$ is an $\call M$-monotone subequation on $X$.
\end{thm}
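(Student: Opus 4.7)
The plan is to assemble the theorem from the propositions already established earlier in Appendix \ref{AppA}, each of which addresses one ingredient in the definition of a subequation. The $\call M$-monotonicity of $\call F$ is immediate from the hypothesis $\Theta(x) + \call M \subset \Theta(x)$ for each $x \in X$, so I only need to verify the subequation axioms (P), (N) and (T).

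First I would observe that the minimal monotonicity cone $\call M_0 = \call N \times \{0\} \times \call P$ is contained in any constant coefficient monotonicity cone subequation $\call M$: indeed, $\call M$ contains the origin, is closed under addition (it is a convex cone), and satisfies (P) and (N) itself, so $\call N \times \{0\} \times \call P \subset \call M$. Consequently $\call F_x + \call M_0 \subset \call F_x + \call M \subset \call F_x$ for every $x \in X$, which gives (P) and (N) for $\call F$ at once.

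Next I would dispatch property (T) by combining the propositions already proved. Closedness of $\call F$ in $X \times \call J^2$ is \Cref{contclos}, which uses only the continuity of $\Theta$; in particular each fiber $\call F_x = \Theta(x)$ is closed in $\call J^2$. Property \eqref{T3} follows from \Cref{contt1}, whose hypotheses are exactly those of the theorem. For property \eqref{T2}, I would apply \Cref{mont2}: the closed-fiber hypothesis has just been noted, the monotonicity identity $\call F + \call M = \call F$ (where $\call M$ is viewed as the constant set $X \times \call M$) holds because $\call M$ is a cone containing the origin and $\call F$ is $\call M$-monotone, and $\call M$ satisfies \eqref{T2} since it is a subequation with constant fiber equal to a regular closed cone (cf.\ \Cref{rmkmont2}). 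Finally, having \eqref{T2}, \eqref{T3} and closedness of $\call F$, \Cref{(C)nec} yields \eqref{T1}, completing the verification of (T).

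There is no real obstacle here because every nontrivial step has been isolated as a separate proposition earlier; the only thing to be careful about is the bookkeeping in applying \Cref{mont2}, specifically checking that a constant coefficient monotonicity cone subequation, regarded as a constant subset of $X \times \call J^2$, does satisfy \eqref{T2}. This is true because any monotonicity cone subequation $\call M \subset \call J^2$ has $\intr\call M \neq \emptyset$ and is closed, convex and invariant under positive dilations, and such cones are always regular closed — so $\call M = \overline{\intr \call M}$, which is precisely \eqref{T2} in the constant coefficient setting. With this observation in hand, the entire proof reduces to citing, in order, \Cref{contclos}, \Cref{contt1}, \Cref{mont2} (together with \Cref{rmkmont2}), and \Cref{(C)nec}.
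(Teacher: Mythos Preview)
Your proposal is correct and follows essentially the same approach as the paper's proof: both establish (P) and (N) from $\call M$-monotonicity, then verify (T) by combining \Cref{contclos} (closedness), \Cref{mont2} with \Cref{rmkmont2} (for \eqref{T2}), \Cref{contt1} (for \eqref{T3}), and finally \Cref{(C)nec} (for \eqref{T1}). Your version is simply more explicit about why $\call M_0 \subset \call M$ and why the constant coefficient cone $\call M$ satisfies \eqref{T2}.
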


\begin{proof}
By definition, $\call F$ is $\call M$-monotone; that is, $\call F + \call M \subset \call F$. Also, $\call F$ has nontrivial, and closed, fibers. Therefore the proof now amounts to showing that $\call F$ satisfies the triad of topological properties. By \Cref{mont2} and \Cref{rmkmont2}, $\call F$ satisfies \eqref{T2}, by \Cref{contt1}, $\call F$ satisfies \eqref{T3}; this means that $\call F$ satisfies \eqref{T2} and \eqref{T3}.  By \Cref{contclos}, $\call F$ is closed, and thus, by \Cref{(C)nec}, $\call F$ satisfies\eqref{T1} as well.
\end{proof}

\section{Some basic tools in nonlinear potential theory}\label{AppB}

In this appendix, we collect some foundational results which lie at the heart of our methods and which form, along with the Almost Everywhere Theorem~\cite[Theorem 4.1]{HL16}, the ``basic tool kit of viscosity solution techniques'' in \cite{CHLP22}: the Bad Test Jet \Cref{l:btj} and the Definitional Comparison \Cref{defcompa}. Let us highlight that these two tools  will be here stated in a variable coefficient setting, while in \cite{CHLP22} they are proved for constant coefficient subequations. The proofs of these results in the variable coefficient setting will be given in \cite{PR22}, which involve some ``bland adjustments''of the constant coefficient proofs of \cite {HP22}. In particular, they not not require fiberegularity. We will also recall  some elementary properties of the set $\call F(X)$, of all $\call F$-subharmonics on $X$ known from \cite{HL09}, whose proofs are somewhat reformulated in \cite{PR22} making  more explicit use of the definitional comparison of \Cref{defcompa}.

We begin with the first tool which is very useful when one seeks to check the validity of subharmonicity at a point by a contradiction argument. More precisely, if $u$ fails to be subharmonic at a given point, then one must have the existence of a \emph{bad test jet} at that point, as stated in the following lemma. This criterion is essentially the contrapositive of the definition of viscosity subsolution, when one takes \emph{strict} upper contact quadratic functions as upper test functions (see \cite[Lemma 2.8 and Lemma C.1]{CHLP22}). 

\begin{lem}[Bad Test Jet Lemma] \label{l:btj}
Given $u \in \USC(X)$, $x \in X$ and $\call F_x \neq \emptyset$, suppose $u$ is not $\call F$-subharmonic at $x$. Then there exists $\epsilon > 0$ and a $2$-jet $J \notin \call F_x$ such that the (unique) quadratic function $\phi_J$ with $J^2_x\phi_J = J$ is an upper test function for $u$ at $x$ in the following $\epsilon$-strict sense:
\begin{equation} \label{btj:i}
u(y) - \phi_J(y) \leq -\epsilon|y-x|^2 \quad \text{$\forall y$ near $x$ (with equality at $x$)}.
\end{equation}
\end{lem}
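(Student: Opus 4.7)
The plan is to \emph{strictify} the bad upper test jet supplied by the contrapositive of the definition of $\call F$-subharmonicity at $x$. Since $u$ fails to be $\call F$-subharmonic at $x$, the inclusion $J^{2,+}_x u \subset \call F_x$ must fail, so there exists $J_0 = (r_0, p_0, A_0) \in J^{2,+}_x u$ with $J_0 \notin \call F_x$. By the definition of upper test jet there is a $C^2$ function $\varphi$ near $x$ with $J^2_x \varphi = J_0$, $u \leq \varphi$ on some neighborhood of $x$, and $u(x) = \varphi(x) = r_0$. A second-order Taylor expansion yields, as $y \to x$,
\[
\varphi(y) = \phi_{J_0}(y) + o(|y-x|^2),
\]
where $\phi_{J_0}$ is the (unique) quadratic with $J^2_x \phi_{J_0} = J_0$. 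Combined with $u \leq \varphi$ near $x$, this produces only a non-strict upper contact, $u(y) - \phi_{J_0}(y) \leq o(|y-x|^2)$, so a perturbation of $J_0$ will be needed to upgrade the $o(|y-x|^2)$ to $-\epsilon |y-x|^2$.

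The key observation for the perturbation is that $\call F_x$ is closed in $\call J^2$ (either via property (T2), or by extracting the fiber at $x$ from the closed set $\call F \subset \cJ^2(X)$). Hence there exists $\rho > 0$ such that the open ball $\call B_\rho(J_0) \subset \call J^2$ with respect to the norm $\trinorm{\cdot}$ is disjoint from $\call F_x$. I would then fix any $c \in (0, \rho/2)$ and define the perturbed jet
\[
J \defeq J_0 + (0, 0, 2cI),
\]
for which $\trinorm{J - J_0} = \max_k |\lambda_k(2cI)| = 2c < \rho$, so $J \notin \call F_x$. It is worth stressing that it is the \emph{openness} of $\call J^2 \setminus \call F_x$, not any monotonicity property, that is being exploited here: property (P) would actually allow $J$ to re-enter $\call F_x$ for large $c$, so restricting $c$ small is essential. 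The perturbation must also preserve $r_0$ and $p_0$ (otherwise one would lose either the equality at $x$ or the quadratic strictness), which forces the direction of the perturbation to lie in the $A$-slot; perturbing $A_0$ upward is what simultaneously yields a quadratic strictly dominating $\phi_{J_0}$ near $x$ and, for $c$ small, keeps the resulting jet outside $\call F_x$.

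The corresponding quadratic is $\phi_J(y) = \phi_{J_0}(y) + c|y-x|^2$. Choosing $r > 0$ small enough so that $\varphi(y) - \phi_{J_0}(y) \leq \tfrac{c}{2}|y-x|^2$ for $|y-x| \leq r$ (possible by the Taylor remainder) and restricting further so that $u \leq \varphi$ on $B_r(x)$, one obtains
\[
u(y) - \phi_J(y) = \bigl(u(y) - \varphi(y)\bigr) + \bigl(\varphi(y) - \phi_{J_0}(y)\bigr) - c|y-x|^2 \leq \tfrac{c}{2}|y-x|^2 - c|y-x|^2 = -\tfrac{c}{2}|y-x|^2
\]
for $|y-x| \leq r$, with equality at $y = x$. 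Setting $\epsilon \defeq c/2$ then exhibits $J \notin \call F_x$ together with the quadratic $\phi_J$ satisfying the $\epsilon$-strict upper contact property \eqref{btj:i}, as desired. The only mildly delicate point in the whole scheme is the choice of perturbation direction discussed above; once that is in place, the rest is a routine Taylor estimate.
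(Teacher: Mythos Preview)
Your proof is correct. Note, however, that the paper does not actually supply a proof of this lemma: it is stated in Appendix~B as part of the ``basic tool kit'' and the reader is referred to \cite[Lemma~2.8 and Lemma~C.1]{CHLP22} (and to \cite{PR22} for the variable coefficient adjustments). Your argument---take a bad test jet $J_0 \notin \cF_x$, use closedness of $\cF_x$ to find room for a small positive Hessian perturbation $J = J_0 + (0,0,2cI)$ that stays outside $\cF_x$, and absorb the Taylor remainder into the resulting quadratic cushion---is exactly the standard route taken in those references, so there is nothing substantively different to compare.
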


The second tool is a \emph{comparison principle} whose validity characterizes the $\call F$-subharmonic functions for a given subequation $\call F$. It states that comparison holds if the function $z$ in (ZMP) is the sum of a $\call F$-subharmonic and a $C^2$-smooth and strictly $\tildee{\call F}$-subharmonic. 
It is called \emph{definitional comparison} because it relies only upon the ``good'' definitions the theory gives for $\cF$-subharmonics and for subequations $\cF$ (which include the negativity condition (N) that is important in the proof). It was stated and proven in a context of constant coefficient subequations in~\cite[Lemma 3.14]{CHLP22}. 

\begin{lem}[Definitional Comparison] \label{defcompa}
Let $\call F$ be a subequation and $u \in \USC(X)$.
\begin{itemize}
	\item[(a)] If $u$ is $\cF$-subharmonic on $X$, then the following form of the comparison principle holds for each bounded domain $\Omega \ssubset  X$:
\begin{equation}\label{DCP}
\left\{ \begin{array}{c}
\mbox{ $u + v \leq 0$ on $\partial \Omega \Longrightarrow u + v \leq 0$ on $\Omega$} \\
\\
\mbox{if $v \in \USC(\overline{\Omega}) \cap C^2(\Omega)$ is strictly $\wt{\cF}$-subharmonic on $X$.} \end{array} \right.
\end{equation}
With $w:= -v$ one has the equivalent statement
\begin{equation}\label{DCP2}
\left\{ \begin{array}{c} \mbox{ $u \leq w$ on $\partial \Omega \Longrightarrow u \leq w$ on $\Omega$} \\
\\
\mbox{if $w \in \LSC(\overline{\Omega}) \cap C^2(\Omega)$ with $J_x^2 w \not\in \cF$ for each $x \in \Omega$.} \end{array} \right.
\end{equation}
(That is, for $w$ which are regular and strictly $\cF$-superharmonic in $X$.)
\item[(b)] Conversely, suppose that for each $x_0 \in X$ there there exist arbitrarily small balls $B$ about $x_0$ where the form of comparison of part (a) holds with $\Omega = B$. Then $u$ is $\cF$-subharmonic on $X$. Moreover, it is enough to consider quadratic $v$ or $w$.
\end{itemize} 
\end{lem}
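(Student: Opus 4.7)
\textbf{Proof plan for Lemma \ref{defcompa}.}

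\emph{Part (a).} I would argue by contradiction. If $u + v > 0$ somewhere in $\Omega$, then upper semicontinuity on the compact set $\overline{\Omega}$, together with $u + v \le 0$ on $\partial\Omega$, forces $u + v$ to attain a positive maximum at an interior point $x_0 \in \Omega$. Near $x_0$ the inequality $u(y) + v(y) \le u(x_0) + v(x_0)$ rewrites as $u(y) \le \phi(y) := u(x_0) - (v(y) - v(x_0))$, with equality at $x_0$; since $v \in C^2$ near $x_0$, $\phi$ is a $C^2$ upper test function for $u$ at $x_0$, and $\cF$-subharmonicity of $u$ yields $J^2_{x_0}\phi = (u(x_0),\,-Dv(x_0),\,-D^2v(x_0)) \in \cF_{x_0}$. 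On the other hand, the classical strict $\wt{\cF}$-subharmonicity of $v$ at $x_0$ reads $J^2_{x_0} v \in \intr \wt{\cF}_{x_0}$, which by the reflexivity $\wt{\wt{\cF}} = \cF$ (a consequence of property (T) for $\cF$) is equivalent to $-J^2_{x_0}v \notin \cF_{x_0}$. The finishing step is the negativity axiom (N): setting $s := -v(x_0) - u(x_0) < 0$ (strict because $u(x_0) > -v(x_0)$) and translating $J^2_{x_0}\phi$ by $(s,0,0)$ lands exactly on $-J^2_{x_0}v$ while keeping us in $\cF_{x_0}$, contradicting the previous line.

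\emph{Part (b).} I would prove the contrapositive: assume $u$ fails to be $\cF$-subharmonic at some $x_0 \in X$ and exhibit, on an arbitrarily small ball about $x_0$, a quadratic $w$ (equivalently $v=-w$) witnessing that comparison fails. By the Bad Test Jet Lemma \ref{l:btj} there exist $\epsilon > 0$ and $J = (r,p,A) \notin \cF_{x_0}$ (necessarily $r = u(x_0)$) such that the associated quadratic $\phi_J$ satisfies $u(y) - \phi_J(y) \le -\epsilon|y - x_0|^2$ on some ball $B_{r_0}(x_0)$. The natural candidate is
\[
w(y) \defeq \phi_J(y) - \tfrac{\epsilon}{2}|y - x_0|^2 - c,
\]
with parameters $c,r>0$ to be tuned. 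The boundary-versus-interior check is routine: for $r \le r_0$ one has $u - w \le -\tfrac{\epsilon}{2}r^2 + c$ on $\partial B_r(x_0)$ while $u(x_0) - w(x_0) = c > 0$, so any $c \in (0, \tfrac{\epsilon}{2}r^2)$ gives $u \le w$ on $\partial B_r$ but $u(x_0) > w(x_0)$. It remains to ensure that $w$ is (classically) strictly $\cF$-superharmonic on $B_r(x_0)$, that is, $J^2_y w \notin \cF_y$ for all $y$ in that ball. At $x_0$, $J^2_{x_0}w = (u(x_0) - c,\,p,\,A - \epsilon I)$; here property (P), used contrapositively, converts $(u(x_0),p,A) \notin \cF_{x_0}$ into $(u(x_0),p,A - \epsilon I) \notin \cF_{x_0}$, and then the closedness of $\cF$ in $\cJ^2(X)$ allows one to shrink $c$ and $r$ so that the small $r$-shift by $c$ and the change of base point both stay outside $\cF$. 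The resulting $w$ (which is quadratic) then contradicts the assumed comparison on $B_r(x_0)$, also verifying the final "quadratic is enough" clause.

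The main obstacle in both parts is routing the argument through the monotonicity axioms in the correct direction. In (a), (N) must be applied to \emph{decrease} $r$ from $u(x_0)$ down to $-v(x_0)$ inside $\cF_{x_0}$, which only works because the assumed violation gives $u(x_0) > -v(x_0)$. In (b), the mirror subtlety is that one cannot invoke (N) to justify the $-c$ shift in the $r$-slot of $J^2_{x_0}w$ (that would be the wrong monotonicity direction), so the strict perturbation is provided by (P) via the $-\epsilon I$ term and the small $c$ is absorbed purely by the openness of $\cF^c$. A minor verification I would carry out carefully is the equivalence $J \in \intr\wt{\cF}_x \Leftrightarrow -J \notin \cF_x$ used in (a), which follows from $\wt{\wt{\cF}} = \cF$, itself guaranteed by the topological property (T) on $\cF$.
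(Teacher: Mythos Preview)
The paper does not actually prove this lemma: in Appendix~\ref{AppB} the proof is deferred to \cite{PR22}, with the remark that it consists of ``bland adjustments'' of the constant-coefficient argument in \cite[Lemma~3.14]{CHLP22}. Your proposal is correct and is essentially what that standard argument looks like.

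In part~(a) the interior-maximum argument is exactly right; the key observation you isolate---that the violation $u(x_0)+v(x_0)>0$ is what makes the shift $s=-v(x_0)-u(x_0)$ strictly negative so that (N) applies in the correct direction---is the heart of the matter, and your verification of $J\in\intr\wt{\cF}_x \Leftrightarrow -J\notin\cF_x$ via $\wt{\wt{\cF}}=\cF$ is the intended route. In part~(b) the construction from the Bad Test Jet Lemma is the expected one, and you correctly identify the two points that need care: the strict margin comes from (P) applied contrapositively in the Hessian slot (the $-\epsilon I$), not from (N) in the $r$-slot, and the passage from $J^2_{x_0}w\notin\cF_{x_0}$ to $J^2_y w\notin\cF_y$ for all $y$ near $x_0$ (together with the $-c$ shift) uses closedness of $\cF$ in $X\times\cJ^2$, not merely closedness of the single fiber. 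This last point is precisely the ``variable-coefficient adjustment'' the paper alludes to.
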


\begin{remark}[Applying the definitional comparison] \label{appldefcompa}
Sometimes it is useful to prove the contrapositive of the form of comparison in part (a) of \Cref{defcompa} in order to conclude subharmonicity. That is to say, in order to show by (b) that $u$ is subharmonic on $X$ one proves that, for each $x \in X$ there is a neighborhood $\Omega \ssubset X$ of $x$ where
\begin{equation} \label{appdefcompa}
(u+v)(x_0) > 0 \ \text{for some $x_0 \in \Omega$} \ \implies \ (u+v)(y_0) > 0 \ \text{for some $y_0 \in \de\Omega$}
\end{equation}
for every $v \in \USC(\bar \Omega) \cap C^2(\Omega)$ which is strictly $\tildee{\call F}$-subharmonic on $\Omega$. Conversely, one can also infer that the implication (\ref{appdefcompa}) holds whenever one knows that $u$ is subharmonic on $X$. In situations where we are interested in proving the subharmonicity of a function which is somehow related to a given subharmonic, this helps to close the circle (for example, see the proofs of \Cref{thm:UTP}  or \Cref{elemprop}).
\end{remark}

The last tool is a collection of elementary properties shared by functions in $\call F(X)$, the set of $\call F$-subharmonics on $X$. They are to be found in \cite[Section~4]{HL09} for pure second-order subequations, in \cite[Theorem 2.6]{HL11} for subequations on Riemannian manifolds, in \cite[Proposition D.1]{CHLP22} for constant-coefficient subequations. By invoking the Definitional Comparison \Cref{defcompa} one can perform most of the proofs along the lines of those of Harvey--Lawson \cite{HL09}. This is done in \cite{PR22}. 
More precisely, one uses the definitional comparison in order to make up for the lack, for arbitrary subequations, of a result like \cite[Lemma 4.6]{HL09}.

\begin{prop}[Elementary properties of $\call F(X)$] \label{elemprop}
	Let $X \subset \R^n$ be open. For any subequation $\call F$ on $X$, the following hold:
	\begin{itemize}[align=left, leftmargin=*, left=11pt, itemsep=4.5pt]
		\item[(i:] \!\emph{local property}) \	$u \in \USC(X)$ locally $\call F$-subharmonic $\iff$ $u \in \call F(X)$;
		\item[(ii:] \!\emph{maximum property}) \	$u,v \in \call F(X)$ $\implies$ $\max\{u,v\}\in \call F(X)$;
		\item[(iii:] \!\emph{coherence property}) \ if $u \in \USC(X)$ is twice differentiable at $x_0\in X$, then
		\[
		\text{$u$ $\call F$-subharmonic at $x_0$}\ \iff\ \text{$\call J^2_{x_0} u \in \call F_{x_0}$;}
		\]
		\item[(iv:] \!\emph{sliding property}) \ $u\in \call F(X)$ $\implies$ $u-m \in \call F(X)$ for any $m >0$;
		\item[(v:] \!\emph{decreasing sequence property}) \ $\{u_k\}_{k\in \mathbb{N}} \subset \call F(X)$ decreasing $\implies$ $ \lim_{k\to\infty} u_k \in \call F(X)$;
		\item[(vi:] \!\emph{uniform limits property}) \ $\{u_k\}_{k\in \mathbb{N}} \subset \call F(X)$, $u_k \to u$ locally uniformly $\implies$ $u \in \call F(X)$;
		\item[(vii:] \!\emph{families-locally-bounded-above property}) \ if $\scr F \subset \call F(X)$ is a family of functions which are locally uniformly bounded above, then the upper semicontinuous envelope $u^*$ of the Perron function $u(\,\cdot\,) \defeq \sup_{w \in \scr F} w(\,\cdot\,)$ belongs to $\call F(X)$.\footnote[2]{Recall that the \emph{upper semicontinuous envelope} of a function $g$ is defined as the function
			\[
			g^\ast (x) \defeq \lim_{r\dto 0}\sup_{y \in B_r(x)} g(y).
			\]
			It is immediate to see that the \emph{upper semicontinuous envelope operator} ${}^* \colon g \mapsto g^*$ is the identity on the set of all upper semicontinuous functions. Also, we called \emph{Perron function} the upper envelope of the family $\scr F$, since $\scr F$ is a family of subharmonics.}
	\end{itemize}
	Furthermore, if $\call F$ has constant coefficients, the following also holds:
	\begin{itemize}[align=left, leftmargin=*, left=11pt, itemsep=2.5pt]
		\item[(viii:] \!\emph{translation property}) \ $u \in \call F(X)$ $\iff$ $u_y\defeq u(\cdot - y) \in \call F(X + y)$, for any $y \in \R^n$.
	\end{itemize}
\end{prop}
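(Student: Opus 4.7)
My plan is to handle properties (i)--(iv) and (viii) directly from the definition of $\cF$-subharmonicity via upper test jets (Definition \ref{defn:Fsub}), invoking only the fiberwise monotonicity properties (P) and (N), while reserving the Definitional Comparison Lemma \ref{defcompa} for the three ``limit-type'' statements (v), (vi), (vii), which are the only ones that genuinely require it.

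For the easy batch: (i) is immediate since $J^{2,+}_x u$ depends only on the germ of $u$ at $x$, so the differential inclusion $J^{2,+}_x u \subset \cF_x$ is a local condition. For (ii), if $\varphi$ is a $C^2$ upper test function for $\max\{u,v\}$ at $x_0$ then, since $\max\{u,v\}(x_0)$ is attained by at least one of $u(x_0), v(x_0)$, say $u$, one checks that $\varphi$ is also an upper test function for $u$ at $x_0$ and concludes by the subharmonicity of $u$. Property (iii) uses the standard identity $J^{2,+}_{x_0} u = J^2_{x_0} u + (\{0\}\times\{0\}\times \cP)$ for functions twice differentiable at $x_0$, combined with positivity (P): if $J^2_{x_0} u \in \cF_{x_0}$ then (P) gives $J^2_{x_0}u + (0,0,P) \in \cF_{x_0}$ for all $P \geq 0$, hence every upper test jet lies in $\cF_{x_0}$; the converse is trivial since $J^2_{x_0}u$ itself is an upper test jet. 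Property (iv) follows from (N): an upper test jet of $u-m$ at $x$ is of the form $(r-m, p, A)$ with $(r,p,A) \in J^{2,+}_x u \subset \cF_x$, and since $-m \leq 0$, (N) gives $(r-m,p,A) \in \cF_x$. Finally, (viii) is a direct translation of test jets: $J^{2,+}_x u_y = J^{2,+}_{x-y} u$, so constant-coefficient fibers $\cF_x \equiv \cF_0$ make subharmonicity translation-invariant.

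For the three limit properties I will use Definitional Comparison \ref{defcompa}(b) in its contrapositive form (Remark \ref{appldefcompa}): to show the candidate limit $u$ is $\cF$-subharmonic it suffices to prove, for each $x_0 \in X$ and arbitrarily small balls $B \ssubset X$ about $x_0$ and each quadratic $v$ which is strictly $\tildee\cF$-subharmonic on $\bar B$, the implication $(u+v)(x_0) > 0 \Rightarrow (u+v)(y_0) > 0$ for some $y_0 \in \de B$. The key device throughout is a small downward shift: if $v$ is strictly $\tildee\cF$-subharmonic on $\bar B$ and $m > 0$ is small enough, then property (N) for $\tildee\cF$ together with the openness of $\intr\tildee\cF$ ensures that $v - m$ is still strictly $\tildee\cF$-subharmonic; this shift converts ``$\geq 0$'' conclusions (which is all one recovers under upper semicontinuity limits) into the strict ``$> 0$'' needed. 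Concretely, for (v), given $u_k \dto u$ and $(u+v)(x_0) > 0$, pick $m \in (0, (u+v)(x_0))$ with $v-m$ still strictly dual-subharmonic; since $u_k \geq u$, one has $(u_k + v - m)(x_0) > 0$, and Definitional Comparison \ref{defcompa}(a) applied to each $u_k$ produces $y_k \in \de B$ with $(u_k + v - m)(y_k) > 0$. A standard Dini-type lemma for decreasing USC sequences (comparing $u_k(y_k)$ to $u_j(y_k)$ for $k \geq j$ fixed and then passing $j \to \infty$) gives $\limsup_k u_k(y_k) \leq u(y_0)$ along a subsequential limit $y_k \to y_0 \in \de B$, and upper semicontinuity of $v$ yields $\limsup_k v(y_k) \leq v(y_0)$; hence $(u+v)(y_0) \geq m > 0$.

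The arguments for (vi) and (vii) are variations on the same theme. For (vi), uniform convergence $u_k \to u$ on compact subsets gives $(u_k+v-m)(x_0) > 0$ for large $k$, and the uniform estimate $|u_k-u| < m/2$ eventually passes the strict positivity from the boundary witnesses $y_k$ to $u$ itself. For (vii), given $u = \sup_{w \in \scr F} w$ and $(u^* + v - m)(x_0) > 0$, the definition of the USC envelope furnishes $x_k \to x_0$ with $u(x_k) \to u^*(x_0)$, continuity of $v$ propagates strict positivity, and the supremum structure yields $w_k \in \scr F$ with $(w_k + v - m)(x_k) > 0$; applying Definitional Comparison to each subharmonic $w_k$ on $\bar B$ yields $y_k \in \de B$ with $(w_k + v - m)(y_k) > 0$, whence $(u^* + v - m)(y_k) \geq (w_k + v - m)(y_k) > 0$, and a limit along a convergent subsequence $y_k \to y_0 \in \de B$ combined with upper semicontinuity of $u^* + v$ gives $(u^* + v)(y_0) \geq m > 0$. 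The only real subtlety — and hence the main ``obstacle'' — is the preservation of strictness on passing to limits; the downward shift by $m$ together with property (N) for the dual subequation is what makes this clean, and it is this step which replaces the delicate Lemma 4.6 of \cite{HL09} used in the classical pure-second-order proof.
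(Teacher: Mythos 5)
Your proposal is correct and follows essentially the route the paper itself indicates: the paper defers the details to \cite{PR22} (along the lines of \cite{HL09} and \cite[Proposition D.1]{CHLP22}), describing the proof as elementary jet arguments for the pointwise properties together with the Definitional Comparison (Lemma \ref{defcompa}) used precisely as in your downward-shift arguments for (v)--(vii), i.e.\ as the substitute for \cite[Lemma 4.6]{HL09}. The only point worth polishing is coherence (iii): for $u$ merely pointwise twice differentiable, the claim that $J^2_{x_0}u$ is itself an upper test jet rests on a classical but nontrivial lemma (equivalence of the test-function and Taylor-expansion definitions of $J^{2,+}_{x_0}u$); it is cleaner to observe that $J^2_{x_0}u+\epsilon\,(0,0,I)$ is an upper test jet for every $\epsilon>0$ and then use the closedness of the fiber $\cF_{x_0}$.
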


\section{Some facts about the Hausdorff distance}\label{AppC}

We briefly recall a few facts about the Hausdorff distance which we have used in the discussion of fiberegularity in Subsection \ref{sec:FR}. The reader can consult \cite{BBI01} for further information.

\begin{definition}
	Let $(M, d)$ be a metric space.
	\begin{enumerate}[label=(\roman*)]
		\item Given $\emptyset \neq A,B \subset M$, one defines the \emph{excess of $A$ over $B$} by
		\begin{equation} \label{EXdef}
		\mathrm{ex}_B(A) \defeq \sup_{A} \mathrm{dist}(\,\cdot\,, B) = \sup_{a\in A} \inf_{b\in B} d(a,b) \in [0,+\infty];
		\end{equation}
		in addition, one defines
		\begin{equation} \label{EXA0}
		\mathrm{ex}_\emptyset(A) \defeq +\infty, \quad \mathrm{ex}_A(\emptyset) \defeq 0 \qquad \text{for each nonempty}\ A \subset M,
		\end{equation}
		and
		\begin{equation} \label{EX00}
		\mathrm{ex}_\emptyset(\emptyset) \defeq 0.
		\end{equation}
		
		\item The {\em Hausdorff distance} on the power set $\scr P(M)$ is the map $d_{\scr H} \colon \scr P(M)^2 \to [0,+\infty]$ defined by
		\begin{equation} \label{defhaus}
		d_{\scr H}(A,B) \defeq \max\big\{\mathrm{ex}_B(A), \mathrm{ex}_A(B) \big\}.
		\end{equation}
	\end{enumerate}
\end{definition}

\begin{remark}
	It is easy to see that $d_{\scr H}(A,\bar A) = 0$ for any $\emptyset \neq A \subset M$, and that the quotient $\scr P(M)/d_{\scr H}$, determined by the relation $A\sim B$ if and only if $d_{\scr H}(A,B) = 0$, is naturally identified with the space $\frk K(M)$ of all closed subsets of $M$. Furthermore, one can prove $(\frk K(M), d_{\scr H})$ is a metric space, and it is complete (or compact) if $M$ is.
\end{remark}

\begin{remark} \label{hausinf}
	We will make use of the following straightforward properties of the Hausdorff distance: for $A \subsetneqq B$,
	\[
	d_{\scr H}(A,B) = +\infty
	\]
	whenever
	\[
	\text{either} \quad A = \emptyset \quad \text{or} \quad
	\text{$A$ bounded and $B$ is unbounded}.
	\]
\end{remark}

In addition, if we consider $(M,d) = (\call J^2, \trinorm\cdot)$, we know that the Hausdorff distance is infinite in another case as well.

\begin{lem} \label{lem:hausinfmon}
	One has
	\[
	d_{\scr H}(\call E, \call J^2) = +\infty \quad \forall \call E \ \text{$\call M$-monotone}.
	\]
\end{lem}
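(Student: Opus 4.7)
My plan is as follows. First, the statement implicitly requires $\call E \subsetneq \call J^2$ (otherwise the Hausdorff distance is $0$), so I shall assume this throughout; note it is how the lemma is invoked in the proof of Proposition~\ref{unifcontext}(a), where each fiber $\Theta(x_k)$ is a proper $\call M$-monotone set. Since $\call E \subset \call J^2$, one has $\mathrm{ex}_{\call J^2}(\call E) = 0$, so by \eqref{defhaus} the goal reduces to showing
\[
\mathrm{ex}_{\call E}(\call J^2) = \sup_{J' \in \call J^2} \inf_{J \in \call E} \trinorm{J-J'} = +\infty.
\]

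I would first dispatch the degenerate case $\call E = \emptyset$: by the convention \eqref{EXA0}, $\mathrm{ex}_\emptyset(\call J^2) = +\infty$, and we are done. So suppose $\call E \neq \emptyset$. Since $\call E \subsetneq \call J^2$, pick any $J^* \in \call J^2 \setminus \call E$. The key idea is to exhibit a ray of points in $\call J^2$ escaping to $\trinorm\cdot$-distance $+\infty$ from $\call E$, by using the fact that $\call E$ cannot "swallow" $J^*$ through $\call M$-monotonicity. To do so, fix $J_0 \in \intr \call M$ (nonempty because $\call M$, being a monotonicity cone subequation, has nonempty interior by property~(T1)), and let $\rho > 0$ such that $\call B_\rho(J_0) \subset \call M$. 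For each $t > 0$, consider the point $J^*_t \defeq J^* - t J_0 \in \call J^2$.

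The central step is the claim $d(J^*_t, \call E) \geq t\rho$. Suppose not: then there is $J \in \call E$ with $\trinorm{J - J^*_t} < t\rho$, so writing $K \defeq J - J^*_t$ we have $J^* = J + (tJ_0 - K)$. Now $tJ_0 - K$ lies in $\call B_{t\rho}(tJ_0) = t \call B_\rho(J_0) \subset t\,\call M = \call M$, because $\call M$ is a cone. Hence $\call M$-monotonicity of $\call E$ forces $J^* = J + (tJ_0 - K) \in \call E$, contradicting $J^* \notin \call E$. Letting $t \to +\infty$ yields $\mathrm{ex}_{\call E}(\call J^2) \geq \sup_{t>0} t\rho = +\infty$, concluding the proof.

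The only subtle point I expect is the appeal to $\intr \call M \neq \emptyset$ and the conic scaling $t \call B_\rho(J_0) = \call B_{t\rho}(tJ_0)$: this is where property~(T) of $\call M$ as a subequation (not merely a convex cone) intervenes, and it is what allows a tubular neighborhood of the ray $t J_0$ to remain in $\call M$ with radius growing linearly in $t$. Without interior points this argument would collapse, so the hypothesis that $\call M$ is a monotonicity cone \emph{subequation} is essential.
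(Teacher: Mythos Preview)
Your proof is correct. Both your argument and the paper's hinge on the same core fact: since $\call M$ is a cone with nonempty interior, it contains balls $\call B_{t\rho}(tJ_0) = t\,\call B_\rho(J_0)$ of arbitrarily large radius. However, the routes differ. The paper passes through Dirichlet duality: it writes $\call E\compl = -\intr\tildee{\call E}$, uses that $\tildee{\call E}$ is also $\call M$-monotone, translates so that $0 \in \tildee{\call E}$ and hence $\call M \subset \tildee{\call E}$, and concludes that $-\intr\call M \subset \call E\compl$ contains arbitrarily large balls. You instead fix a single point $J^* \notin \call E$ and argue directly by contradiction: if some $J \in \call E$ were within $t\rho$ of $J^* - tJ_0$, then $J^* = J + (tJ_0 - K)$ with $tJ_0 - K \in \call M$, forcing $J^* \in \call E$ by monotonicity of $\call E$ itself. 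Your approach is more elementary---it avoids duality entirely and the auxiliary translation step---while the paper's approach has the virtue of making the geometric picture (a cone sitting inside $\call E\compl$) explicit.
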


\begin{proof}
	It suffices to show that $\call E\compl$ contains balls of arbitrarily large radius, so that no finite enlargement of $\call E$ can exhaust $\call J^2$. Note that by the definition of the Dirichlet dual, 
	\[
	\call E\compl = - \intr \tildee{\call E},
	\]
	therefore $\call E\compl$ contains an open ball about some element of $- \tildee{\call E}$; without loss of generality, we may suppose that this is $(0,0,0)$, since translations by a fixed jet preserve proper ellipticity and directionality. Now, one knows that
	\[
	\tildee{\call E} + \call M \subset \tildee{\call E}
	\]
	and since $(0,0,0) \in \tildee{\call E}$ we have
	\[
	\call M \subset \tildee{\call E},
	\]
	yielding
	\[
	-\intr \call M \subset \call E\compl.
	\]
	At this point, it suffices to show that $\intr \call M$ contains balls of arbitrarily large radius. To see this, fix $J_0 \in \intr{\call M}$ and without loss of generality suppose that $\call B_1(J_0) \subset \call M$;\footnote{Since $J_0 \in \intr \call M$, there exists $\rho > 0$ such that $\call B_\rho(J_0) \subset \call M$; since $\call M$ is a cone with vertex at the origin, $\call B_1(\rho^{-1}J_0) = \rho^{-1} \call B_\rho(J_0) \subset \rho^{-1}\call M = \call M$, therefore it suffices to replace $J_0$ by $\rho^{-1}J_0$.} note that one has $t J_0 \in \intr \call M$ for any $t>0$ and
	\[
	\call B_t(t J_0) \subset \intr \call M \quad \forall t>0. \qedhere
	\]
\end{proof}

\end{appendix}

\newtheoremstyle{Claim}{}{}{\itshape}{}{\itshape\bfseries}{:}{ }{#1}
\theoremstyle{Claim}
\newtheorem{ack}{Acknowledgment}  

\begin{ack}
	The authors express their gratitude to Reese Harvey for numerous useful conversations which contributed to the development of this work.
\end{ack}

\end{document}